\newcommand*{\rom}[1]{\expandafter\@slowromancap\romannumeral #1@}
\pgfplotsset{compat=1.10}
\numberwithin{equation}{section}
\newtheorem{thm}{Theorem}[section]
\newtheorem{prop}[thm]{Proposition}
\newtheorem{lem}[thm]{Lemma}
\newtheorem{cor}[thm]{Corollary}
\theoremstyle{definition}
\newtheorem{defn}[thm]{Definition}
\newtheorem{rmk}[thm]{Remark}
\newtheorem{ex}[thm]{Example}
\DeclareMathOperator{\Hom}{Hom}
\address{Department of Mathematics, Northeastern University,
    Boston, MA, 02115, USA}
\email{tsvelikhovskiy.b@husky.neu.edu}
\DeclareMathAlphabet\mathbfcal{OMS}{cmsy}{b}{n} 
\begin{document}
\title{On Categories $\mathcal{O}$ of quiver varieties overlying the bouquet graphs}
\author{Boris Tsvelikhovsky}
\maketitle
\begin{abstract}
We study representation theory of quantizations of  Nakajima quiver varieties associated to bouquet quivers. We show that there are no finite dimensional representations of the quantizations $\overline{\mathcal{A}}_{\lambda}(n, \ell)$ if dim $V=n$ is greater than $1$ and so is the number of loops $\ell$. We find that there is a Hamiltonian torus action with finitely many fixed points in case  $n\leq 3$, provide the dimensions of Hom-spaces between standard objects in category $\mathcal{O}$ and compute the multiplicities of simples in standards for $n=2$ in case of one-dimensional framing and  generic one-parameter subgroups. 
We establish the abelian localisation theorem and find the values of parameters, for which
the quantizations have infinite homological dimension. 
\end{abstract}
\date{ }
{\hypersetup{linkcolor=black}
\tableofcontents
}
\pgfkeys{/pgfplots/scale/.style={
  x post scale=#1,
  y post scale=#1,
  z post scale=#1}
}
\pgfkeys{/pgfplots/axis labels at tip/.style={
    xlabel style={at={(current axis.right of origin)}, xshift=1.5ex, anchor=center},
    ylabel style={at={(current axis.above origin)}, yshift=1.5ex, anchor=center}}
}
\section{Introduction}
\par 
Our primary goal is to study category $\mathcal{O}$ of quantizations of the Nakajima quiver variety with underlying quiver $Q=B_{\ell}$, which has one vertex, $\ell$ loops, where $\ell \in \mathbb{Z}_{\geq 0}$  and a one-dimensional framing. The  notion of category $\mathcal{O}$ in the context of conical symplectic resolutions  was introduced  in  \cite{BLPW}.  In particular in  \cite{Los17_1} the author studies the properties of category $\mathcal{O}$ for the Gieseker varieties. These are the framed moduli spaces of torsion free sheaves on $\mathbb{P}^2$ with rank $r$ and second chern class $n$. They admit a description as quiver varieties for the quiver with one vertex, one loop, $n$-dimensional space assigned to the vertex and an $r$-dimensional framing (see chapter $2$ of \cite{Nak1} for details). The results and methods of  \cite{Los17_1} provide invaluable tools for our research. We briefly recall the setup. 
\subsection{Generalities on category $\mathcal{O}$ for conical symplectic resolutions}
We fix the base field to be $\mathbb{C}$. Let $X_0$ be a normal Poisson affine variety equipped with  an action of the multiplicative group $\mathbb{S}:=\mathbb{C}^{*}$, s.t. the Poisson bracket has a negative degree with respect to this action. We assume that $\mathbb{C}[X_0]=\underset{i \geq 0}{\bigoplus }\mathbb{C}[X_0]_{i}$ with $\mathbb{C}[X]_{0}=\mathbb{C}$ w.r.t. the grading coming from the $\mathbb{S}$-action (this action will be called the \textit{contracting action}).  Geometrically this means that there is a unique fixed point $o\in X_0$ and the entire variety is contracted to this point by the $\mathbb{S}$-action. Let $(X,\omega)$ be a symplectic variety and $\rho:X\rightarrow X_0$ a projective resolution of singularities, which is also a morphism of Poisson varieties. In addition, assume that the action of $\mathbb{S}$ admits a $\rho$-equivariant lift to $X$.  A pair $(X,\rho)$ as above is called a \textit{conical symplectic resolution}.
\begin{defn}
Let $(X,\rho)$ be a conical symplectic resolution.  A \textit{quantization} of the affine variety $X_0$ is an algebra $\mathcal{A}$ together with an isomorphism $gr\mathcal{A}\xrightarrow{\sim}\mathbb{C}[X]$ of graded Poisson algebras. By a quantization of $X$ we understand a sheaf (in the conical topology, i.e. open spaces are Zariski open and $\mathbb{S}$-stable) of filtered algebras $\tilde{\mathcal{A}}$ (the  filtration is complete and separated) together with an isomorphism $gr\mathcal{A}\xrightarrow{\sim}\mathcal{O}_{X}$ of sheaves of graded Poisson algebras.
\end{defn}
\par
   Suppose,  that $X$ is equipped with a Hamiltonian action of a torus $T$ with finitely many  fixed points, i.e. $|X^T| < \infty$. Assume, in addition, that the action of $T$ commutes with  the contracting action of $\mathbb{S}$. A one-parametric subgroup $\nu: \mathbb{C}^{*}\rightarrow T$ is called \textit{generic} if $X^T=X^{\nu(\mathbb{C}^{*})}$. To a  generic one-parametric subgroup $\nu: \mathbb{C}^{*}\rightarrow T$ one can associate a category of modules over the algebra $\mathcal{A}$ defined above, called category $\mathcal{O}_{\nu}(\mathcal{A})$. Namely, the action of $\nu$ lifts to  $\mathcal{A}$ and induces a grading on it, i.e. $\mathcal{A}=\underset{i \in \mathbb{Z}}{\bigoplus }\mathcal{A}_{\lambda}(n,\ell)_{i,\nu}$. We denote 
   \begin{align}
   &\mathcal{A}^{\geq 0,\nu}=\underset{i \geq 0}{\bigoplus }\mathcal{A}_{i,\nu}, \mathcal{A}^{\leq 0,\nu}=\underset{i \leq 0}{\bigoplus }\mathcal{A}_{i,\nu} ~(\mbox{similarly define } \mathcal{A}^{< 0,\nu},\mathcal{A}^{> 0,\nu}) \mbox{ and } \\ &\mathcal{C}_{\nu}(\mathcal{A}):=\mathcal{A}^{\geq 0,\nu}/\left(\mathcal{A}^{\geq 0,\nu}\cap\mathcal{A}\mathcal{A}^{>0,\nu}\right)=\mathcal{A}_{0}/\underset{i > 0}{\bigoplus }\mathcal{A}_{-i}\mathcal{A}_{i}. 
   \end{align}
   \par
 Let $\mathcal{A}\operatorname{-mod}$ be the category of finitely generated $\mathcal{A}$-modules.
 \begin{defn} 
 	\label{CatO} 
 The category $\mathcal{O}_{\nu}(\mathcal{A})$ is the full subcategory of $\mathcal{A}\operatorname{-mod}$, on which $\mathcal{A}^{\geq0,\nu}$ acts locally finitely. 
 \end{defn}


\begin{ex}
Let $\mathfrak{g}$ be a simple Lie algebra with Borel subalgebra $\mathfrak{b}$ and Cartan subalgebra $\mathfrak{h}$. In order to fit the classical BGG category $\mathcal{O}$ in this framework, one needs to consider the Springer resolution  $X=T^*(G/B)\rightarrow \mathcal{N}=X_0$ of the nilpotent cone $\mathcal{N}\subset \mathfrak{g}$. The tori are the maximal torus $T\subset GL(V)$ and $\mathbb{S}:=\mathbb{C}^{*}$ acting by inverse scaling on the cotangent fibers. Let $\mu:Z(\mathfrak{g})\rightarrow \mathbb{C}$ be a central character, then the block $\mathcal{O_{\mu}}\subset\mathcal{O}$ consists of finitely generated $U(\mathfrak{g})$-modules for which $U(\mathfrak{b})$ acts locally finitely, $U(\mathfrak{h})$ semisimply and the center with generalized character $\mu$. Pick a generic one-parameter subgroup $\nu(\mathbb{C}^{*})\subset T$, s.t. $\mathfrak{b}$ is spanned by elements with positive $\nu(\mathbb{C}^{*})$-weights. Let $U(\mathfrak{g})_{\mu}=U(\mathfrak{g})/\mathcal{I}_{\mu}$ with $\mathcal{I}_{\mu}$ the ideal generated by $z-\mu(z)$ for $z\in Z(\mathfrak{g})$ be the central reduction of $U(\mathfrak{g})$ w.r.t the central character $\mu$. Then $U(\mathfrak{g})_{\mu}$ is known to be a quantization of the nilcone $\mathcal{N}$. We want to compare the category $\mathcal{O}_{\nu}(U(\mathfrak{g})_{\mu})$ with $\mathcal{O_{\mu}}$.  The difference in the requirements for an object $M\in U(\mathfrak{g})_{\mu}\operatorname{-mod}$ to be in $\mathcal{O}_{\nu}(U(\mathfrak{g})_{\mu})$ or $\mathcal{O_{\mu}}$ is that for the former containment $Z(\mathfrak{g})$ must act on $M$  with an honest character $\mu$, while for the latter the action of $U(\mathfrak{h})$ on $M$ has to be semisimple. In case $\mu$ is regular these conditions are interchangable, i.e. one gets an equivalent category by dropping one condition and adding the other (see Theorem $1$ in \cite{Soerg}), and, hence, the categories  $\mathcal{O}_{\nu}(U(\mathfrak{g})_{\mu})$ and $\mathcal{O_{\mu}}$ are equivalent. 
\end{ex}

\subsection{Category $\mathcal{O}$ for the quantizations of quiver varieties with $Q=B_{\ell}$}
We study the Nakajima quiver variety with underlying quiver $Q$, which has one vertex, $\ell$ loops, where $\ell \in \mathbb{Z}_{\geq 0}$  and a one-dimensional framing. This variety admits the following description. One starts with a vector space $V$ of dimension $n$ and considers the space $R:=\mathfrak{gl}(V)^{\oplus \ell}\oplus V^*$, which has a natural $G:=GL(V)$ action. The identification of $\mathfrak{g}:=\mathfrak{gl}(V)$ with $\mathfrak{g}^*$ via the trace form enables to identify the cotangent bundle $T^*R$ with  $\mathfrak{gl}(V)^{\oplus 2\ell}\oplus V^* \oplus V$. Next notice that $T^*R$ is a symplectic vector space with a Hamiltonian action of  $G$. The corresponding moment map is given by
\begin{equation}
 \label{eq:(moment)}
\mu(X_{1}, \hdots, X_{\ell},Y_{1}, \hdots, Y_{\ell},i,j) = \sum\limits_{k=0}^{\ell}[X_{k},Y_{k}]-ji.
 \end{equation} 
\par
 \begin{defn}The \textit{affine quiver variety} $\mathcal{M}(n,\ell)$  is the categorial quotient $\mu^{-1}(0)//G:=\mbox{Spec }\mathbb{C}[\mu^{-1}(0)]^{G}$. 

 \end{defn}
\par
To define the Nakajima quiver variety $\mathcal{M}^{\theta}(n,\ell)$, we need to choose some character $\theta$ of $G$. It is known that $\theta$ an integral power of the determinant, i.e. $\theta=\mbox{det}^k$ for some $k \in \mathbb{Z}$.

 \begin{defn}
 The GIT quotient $\mathcal{M}^{\theta}(n,\ell):=\mu^{-1}(0)^{\theta-ss}//^{\theta}{G}$ is called the \textit{Nakajima quiver variety} with parameter $\theta$. 
  \end{defn}
 \par
 The torus $T=(\mathbb{C}^{*})^{\ell}$ acts on $R$ by rescaling  $X_{1}, \hdots, X_{\ell}$. This naturally gives rise to an action on $T^*R$. This action is Hamiltonian and commutes with the action of $G$ and, therefore, descends to $\mathcal{M}(n,\ell)$ and $\mathcal{M}^{\theta}(n,\ell)$. The action of $s\in \mathbb{S}$ is given
by multiplication of all the components of $x \in T^*R$ by $s^{-1}$. Similarly, it commutes with the action of $G$ and descends to $\mathcal{M}(n,\ell)$ and $\mathcal{M}^{\theta}(n,\ell)$.
   \par
  For any $\theta\neq 0$ the action of $G$ on $\mu^{-1}(0)^{\theta-ss}$ is free. This implies that the variety $\mathcal{M}^{\theta}(n,\ell)$ is smooth and symplectic and  is known to be a symplectic resolution of the normal Poisson variety $\mathcal{M}(n,\ell)$. We denote by $\rho$ the corresponding map $\rho:\mathcal{M}^{\theta}(n,\ell)\rightarrow \mathcal{M}(n,\ell)$. It is a conical symplectic resolution.

   \par
   Set $\overline{R}=\mathfrak{sl}(V)^{\oplus \ell}\oplus V^{*}$ and let $\overline{\mathcal{M}}(n,\ell)$ be the affine variety $\mu^{-1}(0)//G$, where slightly abusing notation, we denote by $\mu$ the moment map for the Hamiltonian action of $G$ on $T^*\overline{R}$. Similarly, we set  $\overline{\mathcal{M}}^{\theta}(n,\ell):=\mu^{-1}(0)^{\theta-ss}//^{\theta}{G}$. Next we describe quantizations of $\overline{\mathcal{M}}(n,\ell)$. Denote the ring of differential operators on $\overline{R}$ by $D(\overline{R})$. 
 \begin{defn}
 \label{qComomentMap} A $G$-equivariant linear map $\Phi: \mathfrak{g}\rightarrow D(\overline{R})$, satisfying $[\Phi(x), a]=x_{\overline{R}}(a)$ for any $x \in \mathfrak{g}$ and $a \in D(\overline{R})$ is called a \textit{quantum comoment map}. 
  \begin{rmk}
 The quantum comoment map $\Phi$ is defined up to adding a character $\lambda: \mathfrak{g}\rightarrow \mathbb{C}$. 
 \end{rmk} 
 \par
 Notice that we can identify $D(\overline{R})$ with $D(\overline{R}^*)$ via the Fourier transform sending $\partial_r \in D(\overline{R})$ to the function $r \in D(\overline{R}^*)$ and $r^* \in D(\overline{R})$ to $-\partial_{r^*} \in D(\overline{R}^*)$. Thus defined isomorphism $D(\overline{R})\rightarrow D(\overline{R}^*)$  allows to consider two quantum comoment maps $\Phi,\widetilde{\Phi}:\mathfrak{gl}(V)\rightarrow D(\overline{R})$ sending $x \in \mathfrak{g}$ to the corresponding vector field $x_{\overline{R}}$ or $x_{\overline{R}^*}$. Now  define the \textit{symmetrized quantum comoment map} to be $\Phi^{sym}:=\frac{\Phi+\widetilde{\Phi}}{2}$.
 A direct computation shows that $\Phi^{sym}(x)=\Phi(x)-\zeta(x)$, where $\zeta$ is half the character of the action of $G$ on $\Lambda^{top}R$. For our quiver $Q$ with one-dimensional framing $\zeta(x)=\frac{1}{2}tr(x)$.
 \end{defn}   
 
Next we take a character $\lambda$ of $\mathfrak{g}$ and consider the quantizations $$\mathcal{\overline{A}}_{\lambda}(n,\ell):= (D(\overline{R})/[D(\overline{R})\{\Phi(x)-\lambda(x), x \in \mathfrak{g}\}])^{G},$$ 
$$\mathcal{\overline{A}}^{sym}_{\lambda}(n,\ell):= (D(\overline{R})/[D(\overline{R})\{\Phi^{sym}(x)-\lambda(x), x \in \mathfrak{g}\}])^{G}.$$ 
   \par
   The filtration on $\mathcal{\overline{A}}_{\lambda}(n,\ell)$ is induced from the Bernstein filtration on $D(\overline{R})$ (here deg $\overline{R}=$ deg $\overline{R}^*=1$). Recall that $\mathbb{C}[\overline{\mathcal{M}}(n,\ell)]=(\mathbb{C}[T^*\overline{R}]/I)^G$, where $I:=\{\mu^*(\xi), \xi \in \mathfrak{g}\}$ is the ideal generated by the image of $\mathfrak{g}$ under the comoment map, and denote $\mathcal{I}_{\lambda}:=\{\Phi(x)-\lambda(x), x \in \mathfrak{gl}(V)\}$. The surjectivity of the natural map $\mathbb{C}[\overline{\mathcal{M}}(n,\ell)]\rightarrow \mbox{gr } \mathcal{\overline{A}}_{\lambda}(n,\ell)$ follows from the containment $I \subset  \mbox{gr } \mathcal{I}_{\lambda}$. The reverse containment of ideals follows from the regularity of the sequence $\mu^*(\xi_1), \hdots, \mu^*(\xi_{n^2})$, where $\xi_1, \hdots, \xi_{n^2}$ is some basis for $\mathfrak{g}$. The regularity of the sequence is equivalent to flatness of the moment map   
$\mu$.   
\par
 We notice that the difference between $\mathcal{\overline{A}}_{\lambda}(n,\ell)$ and the algebra $\mathcal{A}_{\lambda}(n,\ell)$ (constructed analogously for $R=\mathfrak{gl}(V)^{\oplus \ell}\oplus V^{*}$) is that $\mathcal{A}_{\lambda}(n,\ell)=D(\mathbb{C}^{\ell})\otimes \mathcal{\overline{A}}_{\lambda}(n,\ell)$. Thus, some questions about representation theory of $\mathcal{\overline{A}}_{\lambda}(n,\ell)$ reduce to analogous ones for $\mathcal{A}_{\lambda}(n,\ell)$.
\par
The quantizations $\overline{\mathcal{A}}^{\theta}$ of  $\overline{\mathcal{M}}^{\theta}(n,\ell)$ are parameterized (up to isomorphism) by the points of $H^2(\overline{\mathcal{M}}^{\theta}(n,\ell))\simeq \mathbb{C}$ (see \cite{Bez-Ka}). The quantization corresponding to $\lambda$ will be denoted by $\overline{\mathcal{A}}_{\lambda}^{\theta}$.

\par
\textbf{Notation.} We will denote by $\overline{\mathcal{A}}_{\lambda}\operatorname{-mod}$ the category of finitely generated  $\overline{\mathcal{A}}_{\lambda}$-modules and  by $\overline{\mathcal{A}}_{\lambda}^{\theta}\operatorname{-mod}$ - the category of coherent $\overline{\mathcal{A}}_{\lambda}^{\theta}$ - modules.
\par  
There are two basic functors between the categories of  $\overline{\mathcal{A}_{\lambda}}\operatorname{-mod}$ and $\overline{\mathcal{A}_{\lambda}}^{\theta}\operatorname{-mod}$ and the corresponding derived categories: $$\mathcal{A}_{\lambda}\operatorname{-mod}  \mathrel{\mathop{\rightleftarrows}^{\mathrm{Loc^{\theta}_{\lambda}}}_{\mathrm{\Gamma_{\lambda}}}}   \mathcal{A}_{\lambda}^{\theta}\operatorname{-mod}.$$  
\par
$$D^{b}(\overline{\mathcal{A}}_{\lambda}\operatorname{-mod})  \mathrel{\mathop{\rightleftarrows}^{\mathrm{LLoc_{\lambda}^{\theta}}}_{\mathrm{R\Gamma_{\lambda}}}}  D^{b}(\overline{\mathcal{A}}_{\lambda}^{\theta}\operatorname{-mod}).$$

\begin{defn}
If the functors $\mbox{Loc}^{\theta}_{\lambda}, \Gamma_{\lambda}$ ($\mbox{LLoc}_{\lambda}^{\theta}, \mbox{R}\Gamma_{\lambda}$) are mutually inverse equivalences, we say that \textit{abelian (derived) localization holds} for the pair $(\lambda,\theta)$.
\end{defn}
\begin{rmk}
	The main Theorem of \cite{MN2} asserts that the derived equivalence holds if and only if the homological dimension of the algebra $\overline{\mathcal{A}}_{\lambda}$ is finite.
\end{rmk}

 \begin{rmk}
 The categories $\mathcal{O}_{\nu}(\mathcal{\overline{A}}_{\lambda}(n,\ell))$ and $\mathcal{O}_{\nu}(\mathcal{A}_{\lambda}(n,\ell))$ are, in fact, equivalent. Indeed, recall that $\mathcal{A}_{\lambda}(n,\ell)=D(\mathbb{C}^{\ell})\otimes \mathcal{\overline{A}}_{\lambda}(n,\ell)$ and let $t_1,\hdots,t_{\ell}$ be the coordinates on $\mathbb{C}^{\ell}$. Then the functor $\mathcal{O}_{\nu}(\mathcal{\overline{A}}_{\lambda}(n,\ell))\rightarrow\mathcal{O}_{\nu}(\mathcal{A}_{\lambda}(n,\ell))$ given by $M\mapsto \mathbb{C}[t_1,\hdots,t_{\ell}]\otimes M$ produces an equivalence of categories. It has a quasi-inverse functor which sends $N \in \mathcal{O}_{\nu}(\mathcal{A}_{\lambda}(n,\ell))$ to the annihilator of $\langle \partial t_1,\hdots, \partial t_{\ell}\rangle$.
 \end{rmk}
\begin{defn} 
	\label{StandFunctor}
	We have the \textit{standardization} and \textit{costandardization}
  functors $\triangle_{\nu}$ and $\nabla_{\nu}:\mathcal{C}_{\nu}(\mathcal{\overline{A}}_{\lambda}(n,\ell))\operatorname{-mod} \rightarrow \mathcal{O}_{\nu}(\mathcal{\overline{A}}_{\lambda}(n,\ell))$ given by 
   $$\triangle_{\nu}(N):=\mathcal{\overline{A}}_{\lambda}(n,\ell)/\mathcal{\overline{A}}_{\lambda}(n,\ell)\mathcal{\overline{A}}^{>0}_{\lambda}(n,\ell)\otimes_{\mathcal{C}_{\nu}(\mathcal{\overline{A}}_{\lambda}(n,\ell))} N$$
    $$\nabla_{\nu}(N):=\mbox{Hom}_{\mathcal{C}_{\nu}(\mathcal{\overline{A}}_{\lambda}(n,\ell))}(\mathcal{\overline{A}}_{\lambda}(n,\ell)/\mathcal{\overline{A}}^{<0}_{\lambda}(n,\ell)\mathcal{\overline{A}}_{\lambda}(n,\ell),N).$$
   \end{defn}
   \par
   We consider the restricted $\mbox{Hom}$ (w.r.t. the natural grading on $\mathcal{\overline{A}}_{\lambda}(n,\ell)/\mathcal{\overline{A}}^{<0}_{\lambda}(n,\ell)\mathcal{\overline{A}}_{\lambda}(n,\ell)$) in the definition of the operator $\nabla_{\nu}$ above.


   The next result can be found in \cite{Los16} (see Proposition $2.2$).
\begin{prop}
\label{AbLoc}
Suppose that abelian localization holds and $\lambda$ is generic (outside some finite set). Choose a generic one-parametric subgroup $\nu$. Then the following is true:
\begin{enumerate}
	\item [(1)] the category $\mathcal{O}_{\nu}(\mathcal{\overline{A}}_{\lambda}(n,\ell))$ depends only on the chamber of $\nu$;
   \item[(2)] the natural functor $D^{b}(\mathcal{O}_{\nu}(\mathcal{\overline{A}}_{\lambda}((n,\ell)))\rightarrow D^{b}(\mathcal{\overline{A}}_{\lambda}(n,\ell)\operatorname{-mod})$ is a full embedding;
    \item[(3)] $\mathcal{C}_{\nu}(\mathcal{\overline{A}}_{\lambda}(n,\ell))=\mathbb{C}[\overline{\mathcal{M}}^{\theta}(n,\ell)^{T}]$;
   \item[(4)] Assume, in addition, that there are finitely many fixed points for the action of $\nu$. The category $\mathcal{O}_{\nu}(\mathcal{\overline{A}}_{\lambda}(n,\ell))$ is highest weight with standard objects $\triangle_{\nu}(p_{i})$ and costandard objects  $\nabla_{\nu}(p_{i})$ for $p_{i} \in \mathbb{C}[\overline{\mathcal{M}}^{\theta}(n,\ell)^{T}]$. The order required for highest weight structure comes from the contraction order on the fixed points.
   \end{enumerate}
   \end{prop}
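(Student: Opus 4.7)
My plan is to pass through the abelian localization equivalence (which is assumed) and reformulate everything on the sheaf side, where the category $\mathcal{O}_{\nu}(\overline{\mathcal{A}}_\lambda(n,\ell))$ identifies with the full subcategory of coherent $\overline{\mathcal{A}}_\lambda^\theta(n,\ell)$-modules whose set-theoretic support is contained in the contracting locus
\[
X^+_\nu \;=\; \bigl\{x \in \overline{\mathcal{M}}^\theta(n,\ell) : \lim_{t\to 0}\nu(t)\cdot x \text{ exists}\bigr\}.
\]
The justification is that, on associated gradeds, local finiteness of the $\overline{\mathcal{A}}_\lambda^{\geq 0,\nu}$-action corresponds to local nilpotence of the action of $\mathbb{C}[\overline{\mathcal{M}}^\theta(n,\ell)]^{>0,\nu}$, which pins down support in $X^+_\nu$. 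Essentially all four claims then become geometric statements about the Bia\l{}ynicki-Birula decomposition attached to $\nu$.

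For (1), as $\nu$ varies within a chamber the weights of $T$ on the tangent spaces $T_pX$ at each fixed point $p$ never change sign, so the BB strata—and hence $X^+_\nu$—are constant on chambers; by the reformulation above, so is the category. For (3), I would filter $\mathcal{C}_\nu(\overline{\mathcal{A}}_\lambda(n,\ell))$ by the filtration inherited from $\overline{\mathcal{A}}_\lambda(n,\ell)$ and identify its associated graded with
\[
\mathbb{C}[\overline{\mathcal{M}}(n,\ell)]_0 \Big/ \sum_{i>0}\mathbb{C}[\overline{\mathcal{M}}(n,\ell)]_{-i}\,\mathbb{C}[\overline{\mathcal{M}}(n,\ell)]_{i} \;\simeq\; \mathbb{C}\bigl[\overline{\mathcal{M}}(n,\ell)^T\bigr],
\]
the second equality being a standard BB computation; abelian localization and properness of $\rho$ on fixed loci upgrade this to $\mathbb{C}[\overline{\mathcal{M}}^\theta(n,\ell)^T]$. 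Because in the cases of interest this target is reduced of Krull dimension zero, the filtration on $\mathcal{C}_\nu$ degenerates and the identification lifts from $\operatorname{gr}$ to the algebra itself.

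Part (2) I would handle by showing that the subcategory of modules supported on $X^+_\nu$ is extension-closed in $\overline{\mathcal{A}}_\lambda^\theta(n,\ell)\operatorname{-mod}$ and that $\operatorname{Ext}$-groups computed there agree with those in the ambient derived category; this uses the $\mathbb{S}$-equivariance and the fact that $X^+_\nu$ is a closed conical $\mathbb{S}$-stable subvariety, reducing via a formal neighborhood argument to fixed-point components. Part (4) is then mostly formal: when $|X^T|<\infty$ the identification in (3) splits $\mathcal{C}_\nu(\overline{\mathcal{A}}_\lambda)$ as $\bigoplus_{p_i} \mathbb{C}$, so $\Delta_\nu(p_i)$ and $\nabla_\nu(p_i)$ are indexed by fixed points; the required $\operatorname{Ext}^\bullet(\Delta_\nu(p_i),\nabla_\nu(p_j))=\delta_{ij}\delta_{\bullet 0}\mathbb{C}$ translates through localization into an intersection computation on BB cells, and the contraction order on fixed points visibly refines the resulting highest-weight order.

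The main obstacle I anticipate is part (2): verifying that support in the locally closed (but not closed) attracting locus $X^+_\nu$ survives taking extensions in the ambient $D^b$ requires the $\mathbb{S}$-contraction together with the fact that $\nu$-grading on $\overline{\mathcal{A}}_\lambda$ is compatible with the Hamiltonian action, and this compatibility is the technical heart of Proposition \ref{AbLoc}.
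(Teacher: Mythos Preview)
The paper does not give its own proof of this proposition: it is stated with the remark ``The next result can be found in \cite{Los16} (see Proposition 2.2)'' and no argument is supplied. So there is no in-paper proof to compare against; your plan is being measured against a citation.

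That said, your outline matches the standard strategy used in the cited literature (Losev's work and \cite{BLPW}): transport the category through abelian localization to the sheaf side, identify $\mathcal{O}_\nu$ with modules supported on the $\nu$-attracting locus, and read off (1), (3), (4) from Bia\l{}ynicki--Birula geometry, with (2) requiring a genuine Ext-comparison argument. One small inconsistency to clean up: you first call $X^+_\nu$ ``a closed conical $\mathbb{S}$-stable subvariety'' and then, two sentences later, ``locally closed (but not closed)''. In the conical setting with commuting $\mathbb{S}$- and $T$-actions the attracting locus is in fact closed (this is where the contracting $\mathbb{S}$-action is used), and that closedness is exactly what makes the Ext-comparison in (2) go through; you should commit to the correct statement. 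Also, for (3) your passage from $\mathbb{C}[\overline{\mathcal{M}}(n,\ell)^T]$ to $\mathbb{C}[\overline{\mathcal{M}}^\theta(n,\ell)^T]$ is where the genericity of $\lambda$ and abelian localization are actually consumed---this is more than a cosmetic upgrade, since the affine fixed locus can be nonreduced or singular while the resolved one is smooth; the cited proof computes $\mathcal{C}_\nu$ directly as global sections of the sheaf quantization restricted to $\overline{\mathcal{M}}^\theta(n,\ell)^T$ rather than going through the affine side first.
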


\subsection{Main results and structure of the paper}
We present the most important results of the paper in order of appearance. In Section $5$ it is established that abelian localisation holds for $(\lambda,\theta)$ with $\theta<0$ and $\lambda<1-\ell$ or $\theta>0$ and $\lambda>\ell-2$ (Theorem  \ref{AbLocHolds}). It is shown that 
if  $\lambda\in (-\infty;1-\ell)\cup (\ell-2;+\infty)$, then the algebra $ \overline{\mathcal{A}}_{\lambda}(2, \ell)$ has finite homological dimension (Corollary \ref{FinHomDim}). In Section $6$ we determine the Hom-spaces between standard objects in $\mathcal{O}_{\nu}(\mathcal{\overline{A}}_{\lambda}(2,\ell))^{\triangle}$ (see Theorem \ref{HomsStand}) and compute the multiplicities of simples in standards (Corollary \ref{MultiplicityCor}). We show that the algebra $\overline{\mathcal{A}}_{\lambda}(2, \ell)$ is not of finite homological dimension for $\lambda \in (-\ell;\ell-1)\cap \mathbb{Z} \mbox{ or } \lambda=-\frac{1}{2}$ (see Theorem \ref{SingParam}). Finally, the complete form of abelian localisation is established in Theorem \ref{AbLocThm}.
\par
The structure of the paper is as follows. Section $2$ gives preliminary results on the varieties $\overline{\mathcal{M}}^{\theta}(n,\ell)$ and the category $\mathcal{O}_{\nu}(\mathcal{\overline{S}}_{\lambda}(2,\ell))$. It is shown that $\overline{\mathcal{M}}^{\theta}(n,\ell)$ has finitely many fixed points w.r.t. the Hamiltonian torus action for $n\leq 3$, the central fiber of the resolution $\bar{\rho}:\overline{\mathcal{M}}^{\theta}(n,\ell)\rightarrow \overline{\mathcal{M}}(n,\ell)$  is of dimension less than $\frac{1}{2}\overline{\mathcal{M}}^{\theta}(n,\ell)$ for $n,\ell>1$. From this (using Gabber's theorem) one deduces that there are no finite dimensional $\overline{\mathcal{A}}_{\lambda}(n,\ell)$-modules  with generic $\nu$. Furthermore, the resolutions  $\bar{\rho}:\overline{\mathcal{M}}^{\theta}(n,\ell)\rightarrow \overline{\mathcal{M}}(n,\ell)$  serve as counterexamples to Conjecture $1.3.1$ in  \cite{ES}. The explanation of this phenomenon concludes the section (see Remark \ref{CounterEx} for details).
\par
In Section $3$, following the recipe of \cite{Nak94}, \cite{Nak} (see also Section $2$ of \cite{BezrLos}), the description of symplectic leaves of  $\overline{\mathcal{M}}(n,\ell)$ and slices to points on them  for $n=2,3$ is obtained. One of the two nontrivial slices to $\overline{\mathcal{M}}(2,\ell)$ turns out to be a hypertoric variety. The description of $T$-fixed points on that slice is provided.
\par
 Following the lines of \cite{BLPW1}, we give an overview on generalities on hypertoric varieties and categories $\mathcal{O}$ associated to them and provide a description of category $\mathcal{O}$ for the slice  (Proposition \ref{SignVectors}, Section $4$). 
 
The next section is devoted to the proof of Theorem  \ref{AbLocHolds} and the description of the locus of $\lambda$, for which the algebra $ \overline{\mathcal{A}}_{\lambda}(2, \ell)$ has finite homological dimension (Corollary \ref{FinHomDim}).
 \par
 Then, using the construction of restriction functor introduced in \cite{B-Et} for rational Cherednik algebras (quantizations of the Hilbert scheme of points on $\mathbb{C}^2$) and its generalization for the Gieseker scheme in \cite{Los17_1}, we define a functor $Res:\mathcal{O}_{\nu}(\mathcal{\overline{A}}_{\lambda}(2,\ell))\rightarrow \mathcal{O}_{\nu}(\mathcal{\overline{S}}_{\lambda}(2,\ell))$, where $\mathcal{O}_{\nu}(\mathcal{\overline{S}}_{\lambda}(2,\ell))$ stands for the category $\mathcal{O}$ for the slice. This functor is exact and faithful on standard objects. It serves as the main ingredient in the proof of Theorem \ref{HomsStand}, which appears in Section $6$.
 \par
 Section $7$ is dedicated to the proof of Theorem \ref{SingParam}. The main ingredients required here are the results of McGerty and Nevins from \cite{MN1}.
 \par
 \textbf{Acknowledgements.}
 I would like to thank Ivan Losev for introducing me to the subject, constant guidance and numerous helpful suggestions. I am grateful to Pavel Etingof for explaining the connections of the results in Section $2$ to those in \cite{ES}.
\section{First results on $\mathcal{O}_{\nu}(\mathcal{\overline{A}}_{\lambda}(n,\ell))$}
In this section we collect some basic information on the category $\mathcal{O}_{\nu}(\mathcal{\overline{A}}_{\lambda}(n,\ell))$. Recall that $\mathcal{\overline{A}}_{\lambda}(n,\ell):= (D(\overline{R})/[D(\overline{R})\{\Phi(x)-\lambda(x), x \in \mathfrak{g}\}])^{G}$ stands for the quantization of $\overline{\mathcal{M}}^{\theta}(n,\ell)$. We fix our  choice of character $\theta=det^{-1}$.
\begin{lem}
	\label{Symplectomorphism}
	There is an isomorphism $\mathcal{\overline{A}}_{\lambda}(n,\ell)\cong\mathcal{\overline{A}}_{-\lambda-1}(n,\ell) $.
\end{lem}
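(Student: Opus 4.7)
The plan is to realize the isomorphism via an explicit algebra automorphism of the Weyl algebra $D(\overline{R})$, constructed from a natural anti-involution combined with the Cartan involution of $G=GL(V)$.

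First, consider the Fourier-type anti-involution $\sigma$ of $D(\overline{R})$ defined on generators by $\sigma(r)=r$ and $\sigma(\partial_r)=-\partial_r$; it is manifestly $G$-equivariant. A direct computation shows that for $x\in\mathfrak{g}$ the vector field $\Phi(x)=x_{\overline{R}}$ is sent to $\sigma(\Phi(x))=-\Phi(x)-\mathrm{div}(x_{\overline{R}})$, where the divergence equals the trace of the $\mathfrak{g}$-action on $\overline{R}=\mathfrak{sl}(V)^{\oplus\ell}\oplus V^*$. The adjoint action on $\mathfrak{sl}^{\oplus\ell}$ contributes zero, while the dual action on $V^*$ contributes $-\mathrm{tr}(x)$, yielding $\sigma(\Phi(x))=-\Phi(x)+\mathrm{tr}(x)$. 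Thus $\sigma$ sends the left ideal generated by $\Phi(x)-\lambda\,\mathrm{tr}(x)$ to the right ideal generated by $-\Phi(x)+(1-\lambda)\,\mathrm{tr}(x)$, descending to an anti-isomorphism between the corresponding quantum Hamiltonian reductions.

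To upgrade this to a genuine algebra isomorphism with the desired parameter shift, I would compose $\sigma$ with the involution coming from the Cartan automorphism $\tau(g)=(g^T)^{-1}$ of $G$, whose Lie algebra version $\tau(x)=-x^T$ sends the character $\mathrm{tr}$ to $-\mathrm{tr}$ and identifies $V$ and $V^*$ as twisted $G$-representations. The composition is a $G$-equivariant algebra automorphism of $D(\overline{R})$ (not merely an anti-automorphism); its effect on the defining ideal combines the divergence shift with the flip of $\mathrm{tr}$, and after tracking signs one obtains that the generator $\Phi(x)-\lambda\,\mathrm{tr}(x)$ is sent, up to an overall sign, to $\Phi(y)-(-\lambda-1)\,\mathrm{tr}(y)$, so passage to $G$-invariants yields $\overline{\mathcal{A}}_\lambda(n,\ell)\cong\overline{\mathcal{A}}_{-\lambda-1}(n,\ell)$.

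The main technical obstacle is the careful sign analysis throughout the composition, particularly verifying that the combined effect is $\lambda\mapsto-\lambda-1$ rather than the reflection $\lambda\mapsto 1-\lambda$ obtained from $\sigma$ alone; one must correctly account for both the $+\mathrm{tr}(x)$ from the divergence of the dual action on $V^*$ and the sign flip of the character induced by Cartan. Conceptually the symmetry is the reflection about $\lambda=-\tfrac{1}{2}$, matching the general ``period negation'' symmetry for quantizations of conical symplectic resolutions; the half-integer center $-\tfrac{1}{2}$ is precisely the half-character $\zeta=\tfrac{1}{2}\mathrm{tr}$ appearing in the symmetrized quantum comoment map $\Phi^{sym}=\Phi-\zeta$, which makes this reflection manifest in the symmetric parameter.
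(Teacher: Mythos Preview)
Your approach is genuinely different from the paper's. The paper argues geometrically: it exhibits an explicit symplectomorphism $\overline{\mathcal{M}}^{\theta}(n,\ell)\simeq\overline{\mathcal{M}}^{-\theta}(n,\ell)$, observes that it acts by $-1$ on $H^{2}$, and then invokes the parametrization of quantizations by periods (the period of $\overline{\mathcal{A}}_\lambda$ being $\lambda+\tfrac{1}{2}$). No explicit automorphism of $D(\overline{R})$ is written down; the isomorphism of algebras is deduced from the classification of quantizations. Your route via an explicit Weyl-algebra automorphism would, if it worked, be more constructive and avoid the period-map machinery.

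However, there is a genuine gap in your second step. The Cartan involution $\tau(g)=(g^{T})^{-1}$ of $G$ does \emph{not} lift to a $G$-equivariant linear automorphism of $\overline{R}=\mathfrak{sl}(V)^{\oplus\ell}\oplus V^{*}$: while $\mathfrak{sl}(V)$ is self-dual under the $\tau$-twist (via $X\mapsto -X^{T}$), the $\tau$-twist of $V^{*}$ is isomorphic to $V$, not to $V^{*}$, and for $n>1$ these are non-isomorphic $G$-modules. Hence your claim that ``the composition is a $G$-equivariant algebra automorphism of $D(\overline{R})$'' is not justified as stated. To repair this you would need an additional partial Fourier transform on the framing factor to pass from $D(\mathfrak{sl}^{\oplus\ell}\oplus V)$ back to $D(\overline{R})$; but Fourier transform is not $G$-equivariant for the na\"{\i}ve action---it shifts the quantum comoment map by a trace term---so this introduces a further parameter shift that must be tracked on top of the divergence shift from $\sigma$ and the sign flip from $\tau$. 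You have not carried out this bookkeeping, and your own final paragraph signals that the sign analysis is exactly where the argument is fragile. In short, the ingredients you list ($\sigma$, Cartan, and implicitly a Fourier transform) can be assembled into a correct proof, but the proposal as written does not actually assemble them, and the key assertion about the composite being an automorphism of $D(\overline{R})$ fails without the missing Fourier step.
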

\begin{proof}
	There is a symplectomorphism $\gamma: \overline{\mathcal{M}}^{\theta}(n,\ell)\simeq \overline{\mathcal{M}}^{-\theta}(n,\ell)$ produced by $$(X_{1}, \hdots, X_{\ell}, Y_{1}, \hdots, Y_{\ell},i,j) \mapsto (Y_{1}^{*}, \hdots, Y_{\ell}^{*}, -X_{1}^{*}, \hdots, -X_{\ell}^{*},j^{*},-i^{*}),$$ thus, inducing multiplication by $-1$ on $H^{2}(\overline{\mathcal{M}}^{\theta}(n,\ell), \mathbb{Z})$. As the image of $\lambda$ under the period map is $\lambda+\frac{1}{2}\in H^{2}(\overline{\mathcal{M}}^{\theta}(n,\ell), \mathbb{Z})$, the result follows. 
\end{proof}
\par
 To study the category  $\mathcal{O}_{\nu}(\mathcal{\overline{A}}_{\lambda}(n,\ell))$, we first need to obtain some information on the torus fixed points. This is summarized in the theorem below.
\begin{rmk}
	Since the case $\ell=1$ was studied in \cite{Los17_1}, henceforth we assume $\ell\geq 2$.
\end{rmk}
\begin{thm}
\label{FixedPts}
The variety $\overline{\mathcal{M}}^{\theta}(n,\ell)$ has finitely many $T$-fixed points if dim$V\leq 3$.
\end{thm}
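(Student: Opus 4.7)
The plan is to analyze the $T$-fixed locus directly via the weight-space decomposition of $V$. Given a $T$-fixed point of $\overline{\mathcal{M}}^{\theta}(n,\ell)$, represented by a tuple $(X_1,\dots,X_\ell,Y_1,\dots,Y_\ell,i,j)\in\mu^{-1}(0)^{\theta-ss}$, the fixedness of its gauge class produces a cocharacter $\phi:T\to GL(V)$ with respect to which the action of $T$ on the tuple coincides with conjugation by $\phi$. Decomposing $V=\bigoplus_{\alpha\in\mathbb{Z}^\ell}V_\alpha$ into $\phi$-weight spaces, the equivariance conditions translate to $X_k(V_\alpha)\subseteq V_{\alpha+e_k}$, $Y_k(V_\alpha)\subseteq V_{\alpha-e_k}$, $j\in V_0$, and $i\in V_0^*$. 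Let $S=\{\alpha:V_\alpha\neq 0\}$ and $d_\alpha=\dim V_\alpha$, so that $\sum_{\alpha\in S}d_\alpha=n$.

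For $\theta=\det^{-1}$, the stability condition is equivalent to the requirement that $\ker i$ contains no nonzero $X_k,Y_k$-invariant subspace. Since $V_\alpha\subseteq\ker i$ for every $\alpha\neq 0$, this forces $0\in S$; moreover, $S$ must be connected in the graph on $\mathbb{Z}^\ell$ whose edges join $\alpha$ to $\alpha\pm e_k$, since otherwise the sum of the $V_\beta$ over a connected component of $S\setminus\{0\}$ would produce an invariant subspace of $\ker i$. For $n\le 3$ we have $|S|\le 3$, and the number of admissible pairs $(S,(d_\alpha)_{\alpha\in S})$ is therefore finite.

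For each such pair the corresponding stratum of the $T$-fixed locus is the GIT quotient of the moment-map zero locus in $\bigoplus_{k,\alpha}\mathrm{Hom}(V_\alpha,V_{\alpha+e_k})\oplus\bigoplus_{k,\alpha}\mathrm{Hom}(V_\alpha,V_{\alpha-e_k})\oplus V_0\oplus V_0^*$ by the residual gauge group $\prod_{\alpha\in S}GL(V_\alpha)$. A direct case check---reading the moment-map equations one weight space at a time and combining them with the stability constraint---shows that for $n\le 3$ each stratum is either empty or a single orbit. When $n=2$, only $S=\{0,\pm e_k\}$ with $d_0=d_{\pm e_k}=1$ survives, giving $2\ell$ fixed points. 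When $n=3$, any stratum with some $d_\alpha\ge 2$ is unstable (one exhibits an explicit $X_k,Y_k$-invariant subspace of $\ker i$), so only size-$3$ supports with all $d_\alpha=1$ contribute.

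The main obstacle is the case enumeration for $n=3$: one has to list every connected three-element subset of $\mathbb{Z}^\ell$ containing $0$---such as $\{0,\pm e_k,\pm e_{k'}\}$, $\{0,e_k,2e_k\}$, or $\{0,e_k,e_k\pm e_{k'}\}$---and verify for each that the moment-map equations and the stability condition pin down the tuple up to the residual torus action. Each individual verification is a short scalar computation on at most three weight spaces, but the bookkeeping across all admissible supports is the most laborious part of the argument.
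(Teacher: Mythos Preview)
Your approach coincides with the paper's: both use the weight decomposition $V=\bigoplus_\alpha V_\alpha$ coming from the cocharacter, observe that $X_k,Y_k$ shift weights by $\pm e_k$ while $i,j$ live in weight zero, and reduce to a finite case check on the admissible supports for $n\le 3$ (the paper organizes the $n=3$ case by tracking how the cyclic vector $v_0\in\operatorname{im}j$ generates $V$, you by the data $(S,(d_\alpha))$). One caveat: in this paper's convention $\theta=\det^{-1}$ corresponds to the condition that $j$ is \emph{cyclic}, not to your stated condition that $\ker i$ contain no invariant subspace; the finiteness conclusion is unaffected since the two resolutions are isomorphic (Lemma~\ref{Symplectomorphism}), but your individual case checks---which $d_\alpha\ge 2$ strata are unstable, and which three-element supports survive---should be rephrased in terms of cyclicity of $j$ to match the paper.
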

\begin{proof}
Let $\tilde{p}=(X_{1}, \hdots, X_{\ell},Y_{1}, \hdots, Y_{\ell},i,j)\in \mu^{-1}(0)$ be a point in the preimage of a fixed point $p\in \overline{\mathcal{M}}^{\theta}(n,\ell)$, then there exists a homomorphism $\eta_p:T \rightarrow G$, s.t. the following system of equalities is satisfied ($t=(t_{1},\hdots t_{\ell})\in T$): \begin{equation}
 \label{eq:(FixedPts)}
 \begin{cases} t_{1}X_{1}=\eta_p(t)X_{1}\eta_p(t)^{-1} \\ \hdots \\ t_{\ell}X_{\ell}=\eta_p(t)X_{\ell}\eta_p(t)^{-1} \\ t_{1}^{-1}Y_{1}=\eta_p(t)Y_{1}\eta_p(t)^{-1} \\ \hdots \\ t_{\ell}^{-1}Y_{\ell}=\eta_p(t)Y_{\ell}\eta_p(t)^{-1}\\ i=\eta_p(t)^{-1}i \\ j=\eta_p(t)j. \end{cases}
 \end{equation}
\par
Let $\{\varepsilon_1,\ldots,\varepsilon_\ell\}$ be the set of coordinate characters of the torus $T$, i.e. $\varepsilon_i(t_1,\hdots,t_\ell)=t_i$. The  weight decomposition of $V$ with respect to $\eta_p$  is 
$$V=\underset{\chi \in char(T)}{\bigoplus} V_{\chi},$$
with $V_{\chi}=\{v\in V|~ \eta_p(t)\cdot v=\chi(t)v\}$.
It follows from the system of equations \eqref{eq:(FixedPts)} that $X_i(V_\chi)\subset V_{\chi-\varepsilon_i}$ and, similarly, the $Y_{i}$'s - to $Y_i(V_\chi)\subset V_{\chi+\varepsilon_i}$ (here multiplication of characters is written additively). As $im~j\neq 0$ due to the stability condition it follows from the last equation in \eqref{eq:(FixedPts)}  that  im$~j\in V_0$.
 \par
 Below we provide a description of the fixed points when dim$(V)\leq 3$.
\par
\textit{Case 1}. If dim$(V)=1$, the variety $\overline{\mathcal{M}}^{\theta}(1,\ell)$ is a single point.
\par
\textit{Case 2}. If dim$(V)=2$, we choose a cyclic vector  $v_{0}\in im~j$ as the first vector in the basis. Then at least one of the $X_{k}$ or $Y_{s}$ must act nontrivially on $v_{0}$ and the image is $v_{1}$ inside some $V_{\pm \varepsilon_i}$. The vectors $v_{0}$ and $v_{1}$ already span $V$ as they have different weights and cannot be collinear. We notice that $X_{s}v_{0}=v_{1}$ or $Y_{s}v_{0}=v_{1}$ immediately implies $X_{\neq s}v_{0}=Y_{\neq s}v_{0}=X_{\neq s}v_{1}=Y_{\neq s}v_{1}=0$ as all these vectors would lie in weight spaces different from $V_{0,\hdots,0}$ and $V_{0,\hdots,0,\pm1_{s},0,\hdots,0}$. It remains to notice that equation $\eqref{eq:(moment)}$ becomes $[X_{s},Y_{s}]+ji=0$, which shows that $X_{s}\neq 0$ implies $Y_{s}=0$ and vice versa. Therefore, there are $2\ell$ fixed points: $p_{s}=(X_{\neq s}=0, X_{s}=\left(\begin{array}{cc}
0 & 0 \\
1 & 0\end{array}\right), Y_{1}=0, \hdots, Y_{\ell}=0, i=0, j=\left(\begin{array}{c}
1  \\
0\end{array}\right))$, $p_{s+\ell}=(X_{1}=0, \hdots, X_{\ell}=0, Y_{\neq s}=0, Y_{s}=\left(\begin{array}{cc}
0 & 0 \\
1 & 0\end{array}\right), i=0, j=\left(\begin{array}{c}
1  \\
0\end{array}\right))$ , where $s \in \{1,\hdots, \ell\}$.
\par
\textit{Case 3}.
 Now dim$(V)=3$. Again let the cyclic vector $v_{0}:=im~j$ be the first vector in the basis. Now there are the following possibilities ($s,k \in \{1,\hdots,\ell\}$): 
 \par
 $\bullet$ for some $s,k$: $X_{s}v_{0}=v_{1}\neq 0$ and $Y_{k}v_{0}=v_{2}\neq 0$; 
  \par
 $\bullet$ for some $s\neq k$:   $X_{s}v_{0}=v_{1}\neq 0$ and $X_{k}v_{0}=v_{2}\neq 0$;
   \par
 $\bullet$ for some $s\neq k$: $Y_{s}v_{0}=v_{1}\neq 0$ and $Y_{k}v_{0}=v_{2}\neq 0$;
   \par
 $\bullet$ for some $s\neq k$:  $X_{s}v_{0}=v_{1}\neq 0$ and $Y_{k}v_{1}=v_{2}\neq 0$;
   \par
 $\bullet$ for some $s,k$: $X_{s}v_{0}=v_{1}\neq 0$ and $X_{k}v_{1}=v_{2}\neq 0$;
   \par
 $\bullet$ for some $s,k$: $Y_{s}v_{0}=v_{1}\neq 0$ and $Y_{k}v_{1}=v_{2}\neq 0$;
 \par
  In each of the cases above the vectors $v_{0}, v_{1}$ and $v_{2}$ are linearly independent and span $V$, while all the remaining $X$ and $Y$ coordinates of $p$ are zero. We verify it when $X_{s}v_{0}=v_{1}$ and $Y_{k}v_{0}=v_{2}$, the remaining cases being similar. 
\par
First, $X_{\neq k}$ and $Y_{\neq s}$ must be zero, as otherwise there would be vectors with weights different from those of  $v_{0}, v_{1}$   and $v_{2}$ and, therefore, linearly independent with them. For the same reason $X_{k}v_{0}=X_{k}v_{1}=X_{s}v_{1}=X_{s}v_{2}=Y_{s}v_{0}=Y_{s}v_{2}=Y_{k}v_{1}=Y_{k}v_{2}=0$. To show  $Y_{s}v_{1}=0$, we notice that equation $\eqref{eq:(moment)}$ reduces to $[X_{s}, Y_{s}]+[X_{k}, Y_{k}] + ji = 0$. Applying to $v_{1}$, we get $$X_{s}Y_{s}v_{1}+jiv_{1}=0,$$ and notice that $X_{s}Y_{s}v_{1} \in V_{0,\hdots,-1_{s},\hdots,0}$, while $ji v_{1} \in V_{0,\hdots,0_{s},\hdots,0}$. Thus, $jiv_{1}=0$ and $X_{s}Y_{s}v_{1}=0$ separately, so $Y_{s}v_{1}=0$ and $Y_{s}=0$. It is analogous to show $X_{k}=0$.
 \par
\begin{center}
\begin{tikzpicture}
\matrix(m)[matrix of math nodes,
row sep=3em, column sep=2.5em,
text height=1.5ex, text depth=0.25ex]
{v_{0} & v_{1}\\ v_{2} \\};
\path[->,font=\scriptsize]
(m-1-1)  edge node[right] {$Y_{k}$} (m-2-1)
(m-1-1)  edge node[above] {$X_{s}$} (m-1-2); 
\end{tikzpicture}
\end{center}
\end{proof}
 \begin{rmk} 
Next we show that when $n=4, \ell=2$  the subvariety of fixed points contains a copy of the projective line $\mathbb{CP}^{1}=\mathbb{C}[\mu_{1}:\mu_{2}]$. The operators below are presented in a weight basis with the first vector of weight $(0,0)$, the second $(-1,0)$, the third $(0,-1)$ and the fourth $(-1,-1)$, the action of the subgroup of $G$, preserving the weight decomposition, can only simultaneously rescale $\mu_{1} $ and $\mu_{2}$. The subvariety is given by
$$X_{1}=\left(\begin{array}{cccc}
0 & 0 & 0 & 0\\
1 & 0 & 0 & 0\\
0 & 0 & 0 & 0\\
0 & 0 & \mu_{1} & 0
\end{array}\right), ~X_{2}=\left(\begin{array}{cccc}
0 & 0 & 0 & 0\\
0 & 0 & 0 & 0\\
1 & 0 & 0 & 0\\
0 & \mu_{2} & 0 & 0
\end{array}\right),~Y_{1}=Y_{2}=0, ~i=0,~ j=\left(\begin{array}{c}
1 \\
0 \\
0\\
0 
\end{array}\right).$$
 \par
\begin{center}
\begin{tikzpicture}
\matrix(m)[matrix of math nodes,
row sep=3em, column sep=2.5em,
text height=1.5ex, text depth=0.25ex]
{& v_{0,0} \\ v_{-1,0} & & v_{0,-1}\\ & v_{-1,-1} \\};
\path[->,font=\scriptsize]
(m-1-2)  edge node[left] {$X_{1}$} (m-2-1)
(m-1-2)  edge node[right] {$X_{2}$} (m-2-3)
(m-2-1)  edge node[left] {$X_{2}$} (m-3-2)
(m-2-3)  edge node[right] {$X_{1}$} (m-3-2); 
\end{tikzpicture}
\end{center}
\end{rmk}
\begin{rmk}
	Both varieties $\overline{\mathcal{M}}^{\theta}(1,\ell)$ and $\overline{\mathcal{M}}(1,\ell)$ consist of a single point, therefore, we proceed with the case $dim V=2$.
\end{rmk}
The following fact is a particular case of the result established in Section $5$ of \cite{Los17} and will be used in the proof of Theorem \ref{HomsStand}. Suppose $\tilde{\nu}$ lies in the face of a chamber containing $\nu$. Then $\triangle_{\tilde{\nu}}$
restricts to an exact functor $\mathcal{O}_{\nu}(C_{\tilde{\nu}}(\mathcal{\overline{A}}_{\lambda}(2,\ell))) \rightarrow \mathcal{O}_{\nu}(\mathcal{\overline{A}}_{\lambda}(2,\ell))$. Moreover, there is an isomorphism of functors $\triangle_{\nu}=\triangle_{\tilde{\nu}}\circ \underline{\triangle}$, where $ \triangle_{\tilde{\nu}}: C_{\tilde{\nu}}(\mathcal{A}_{\lambda})\operatorname{-mod} \rightarrow \mathcal{A}_{\lambda}\operatorname{-mod}, \underline{\triangle}: C_{\nu}(\mathcal{A}_{\lambda})\operatorname{-mod} \rightarrow C_{\tilde{\nu}}(\mathcal{A}_{\lambda})\operatorname{-mod}$ and $\triangle_{\nu}$ is the standardization functor given by Definition \ref{StandFunctor}. This allows to study the functor $\triangle_{\tilde{\nu}}$ in stages.
\par 
We start by describing the fixed points loci $\overline{\mathcal{M}}^{\theta}(2,
\ell)^{\nu(\mathbb{C}^{*})}$ for certain one-parameter subgroups $\tilde{\nu}: \mathbb{C}^{*} \rightarrow T$ and the corresponding algebras $C_{\tilde{\nu}}(\mathcal{A}_{\lambda})$.
\begin{thm}
	\label{AllFP}
The fixed point set $\overline{\mathcal{M}}^{\theta}(2,
\ell)^{\tilde{\nu}(\mathbb{C}^{*})}$ for $\tilde{\nu}: \mathbb{C}^{*}\rightarrow T$ with $\tilde{\nu}(t)=(t^d,1,\hdots,1)$ and $d \in \mathbb{Z}_{>0}$ is $\overline{\mathcal{M}}^{\theta}(2,
\ell-1)\amalg  \mathbb{C}^{2\ell-2}\amalg  \mathbb{C}^{2\ell-2}$.
\end{thm}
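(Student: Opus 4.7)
The plan is to adapt the fixed point analysis of Theorem \ref{FixedPts} to the restricted one-parameter subgroup $\tilde{\nu}$. For any $\tilde{p}\in\mu^{-1}(0)$ whose image in $\overline{\mathcal{M}}^{\theta}(2,\ell)$ is $\tilde{\nu}$-fixed, there is a homomorphism $\eta_p:\mathbb{C}^{*}\to G$ satisfying $\eta_p(t)X_1\eta_p(t)^{-1}=t^{d}X_1$, $\eta_p(t)Y_1\eta_p(t)^{-1}=t^{-d}Y_1$, commuting with $X_k,Y_k$ for $k\ge 2$, and fixing $j$ and $i$. Writing $V=\bigoplus_{\chi}V_{\chi}$ for the weight decomposition of $V$ under $\eta_p$, these conditions force $X_1(V_{\chi})\subset V_{\chi-d}$, $Y_1(V_{\chi})\subset V_{\chi+d}$, each $X_k, Y_k$ ($k\ge 2$) to preserve every $V_{\chi}$, $j\in V_0$, and $i$ to vanish on $V_{\chi}$ for $\chi\ne 0$.

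Since $\dim V=2$ and $\theta$-stability forces the cyclic vector $j$ to generate $V$ under the algebra spanned by the $X_k,Y_k$, the only weight patterns consistent with these constraints are: (A) $V=V_0$; (B) $V=V_0\oplus V_{-d}$; (C) $V=V_0\oplus V_{d}$. In any other split $V=V_0\oplus V_a$ with $a\notin\{0,\pm d\}$ one has $X_1=Y_1=0$ identically, while the remaining operators preserve the weight grading, making $V_a$ unreachable from $j\in V_0$ and violating cyclicity.

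In case (A), $\eta_p$ acts trivially on $V$ and $X_1=Y_1=0$ automatically (their images would land in absent weight spaces); the residual data $(X_k,Y_k)_{k\ge 2},i,j$ together with the moment map $\sum_{k\ge 2}[X_k,Y_k]=ji$ and the full gauge group $GL(V)$ is precisely a point of $\overline{\mathcal{M}}^{\theta}(2,\ell-1)$. In case (B), choosing a weight basis $(v_0,v_{-d})$, $X_1$ has a single entry $x$ below the diagonal and $Y_1$ has a single entry $y$ above the diagonal, while $j=(j_0,0)^{T}$, $i=(i_0,0)$, and $X_k,Y_k$ for $k\ge 2$ are diagonal traceless matrices. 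Splitting the moment map into its scalar part $i(j)=j_0 i_0=0$ and its traceless matrix part $\sum[X_k,Y_k]=ji-\tfrac{i(j)}{2}I$, one combines the first equation with the stability condition $j_0\ne 0$ to conclude $i_0=0$, whence the traceless equation reduces to $[X_1,Y_1]=0$, i.e. $xy=0$. Cyclicity forbids $x=0$ (otherwise $V_{-d}$ is unreachable from $j$), so $y=0$ and $Y_1=0$. The residual gauge $GL(V_0)\times GL(V_{-d})\cong(\mathbb{C}^{*})^{2}$ normalizes $j_0=x=1$, and the remaining $2(\ell-1)$ free parameters are the unique nonzero diagonal entries of $X_k,Y_k$ for $k\ge 2$; the locus is thus $\mathbb{C}^{2\ell-2}$. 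Case (C) is obtained from (B) by interchanging the roles of $X_1$ and $Y_1$ and yields a second copy of $\mathbb{C}^{2\ell-2}$.

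The main technical hurdle is the careful bookkeeping of the $\mathfrak{gl}(V)$-valued moment map on $\overline{R}$: one must separate the scalar equation $i(j)=0$ from the traceless matrix equation $\sum[X_k,Y_k]=ji$, combine them with the constraint $X_k,Y_k\in\mathfrak{sl}(V)$, and use $\theta$-stability to exclude the degenerate branches. Once this is done, the three loci are pairwise disjoint closed subvarieties (distinguished by the isomorphism class of the weight decomposition of $V$ under $\eta_p$), and their union exhausts the $\tilde{\nu}$-fixed locus.
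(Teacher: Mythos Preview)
Your proof is correct and follows essentially the same strategy as the paper: classify fixed points by the weight decomposition of $V$ under the compensating cocharacter $\eta_p$, observe that $j\in V_0$ forces $V_0\neq 0$, and then treat the three admissible patterns. The paper phrases cases (B) and (C) more abstractly, identifying each component as the Hamiltonian reduction of $(T^*\overline{R})^{\tilde\eta_p}$ by the centralizer $Z_{\eta_p}$, whereas you carry out the explicit coordinate computation (splitting the moment map into its scalar and traceless parts and using cyclicity to force $y=0$); both routes give the same $\mathbb{C}^{2\ell-2}$ and are equally valid.
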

\begin{proof}
The subset $ \overline{\mathcal{M}}^{\theta}(2,\ell)^{\tilde{\nu}(\mathbb{C}^{*})}$ is formed by the points $p=(X_{1},\hdots,X_{\ell},Y_{1},\hdots,Y_{\ell},i,j)$ which satisfy the system of equations \eqref{AdditionalFixedOnePPoints} below. These equations are obtained analogously to those in \eqref{eq:(FixedPts)} with $\eta_p$ standing for the composition $\mathbb{C}^*\stackrel{\tilde{\nu}}{\rightarrow}T \rightarrow G$, s.t. 
\begin{equation}
\label{AdditionalFixedOnePPoints}
 \begin{cases} t^{d}X_{1}=\eta_p(t)X_{1}\eta_p(t)^{-1} \\ X_{2}=\eta_p(t)X_{2}\eta_p(t)^{-1} \\ \hdots \\ X_{\ell}=\eta_p(t)X_{\ell}\eta_p(t)^{-1} \\ t^{-d}Y_{1}=\eta_p(t)Y_{1}\eta_p(t)^{-1} \\ Y_{2}=\eta_p(t)Y_{2}\eta_p(t)^{-1} \\ \hdots \\ Y_{\ell}=\eta_p(t)Y_{\ell}\eta_p(t)^{-1}\\ i=\eta_p(t)^{-1}i \\ j=\eta_p(t)j. \end{cases}
 \end{equation}
 and  $\eta_p$ is the same for points in the same connected component. Let $\tilde{\eta}_p$ be the one-parameter subgroup $(\tilde{\nu}, \eta_p)\subset T\times G$.  The irreducible components of $\overline{\mathcal{M}}^{\theta}(2,\ell)^{\tilde{\nu}(\mathbb{C}^{*})}$ can be recovered as the Hamiltonian reductions of the vector space $( T^*\overline{R})^{\tilde{\eta}_p} $ with respect to the action of $Z_{\eta_p}$ (the centralizer  of $\eta_p$ in $G$).
 \par
 There are two possible cases. First, if $X_1=Y_1=0$, it follows from \eqref{AdditionalFixedOnePPoints} and our choice of the stability condition that the entire $2$-dimensional vector space $V$ is of weight $0$ with respect to $\eta_p(t)$ and, hence, $\eta_p(t)=\left(\begin{array}{cc}
1 & 0 \\
0 & 1\end{array}\right)$. Such points form the fixed component $ \overline{\mathcal{M}}^{\theta}(2,
\ell-1)\subset \overline{\mathcal{M}}^{\theta}(2,\ell)^{\tilde{\nu}(\mathbb{C}^{*})} $. Indeed, $ ( T^*\overline{R})^{\tilde{\eta}_p}=\mathfrak{sl}_2^{\oplus 2\ell-2}\oplus V\oplus V^*$ and $Z_{\eta_p}=G$.
 
Next we treat the case when $(X_1,Y_1)\neq 0$. Let $v_0\in im~j$ be a cyclic vector.  Notice, that since dim$V=2$ and $X_1 v_0\subset V_{-d\varepsilon_1}$ while $Y_1 v_0\subset V_{d\varepsilon_1}$, we must have that at least one of the operators $X_1,Y_1$ is zero as well the remaining one squared. Therefore, the matrix of the nonzero operator is conjugate to $\left(\begin{array}{cc}
0 & 0 \\
1 & 0\end{array}\right)$.  One observes that  $X_1=\left(\begin{array}{cc}
0 & 0 \\
1 & 0\end{array}\right)$ and $Y_1=0$ implies the weight basis of $V$ consists of vectors with weights $0$ and $d$, while $\eta_p(t)=\left(\begin{array}{cc}
1 & 0 \\
0 & t^{d}\end{array}\right)$ in this basis, similarly, $\eta_p(t)=\left(\begin{array}{cc}
1 & 0 \\
0 & t^{-d}\end{array}\right)$ if $Y_1=\left(\begin{array}{cc}
0 & 0 \\
1 & 0\end{array}\right)$ and $X_1=0$. In either of the two cases $(T^*\overline{R})^{\tilde{\eta}_p}=\{X_2,\hdots,X_{\ell},Y_2,\hdots,Y_{\ell}~|~X_i,Y_j\in \mathfrak{h}\subset \mathfrak{sl}_2\}$ and the action of $Z_{\eta_p}=\left(\begin{array}{cc}
* & 0 \\
0 & *\end{array}\right)\subset G$ is trivial, hence the Hamiltonian reduction is isomorphic to $\mathbb{C}^{2\ell-2}$.
\end{proof}
 \begin{rmk}
	\label{SmTFPpp}
	The $T'\simeq(\mathbb{C}^{*})^{\ell-1}:=\{(t_1,\hdots,t_{\ell-1},t_{\ell})\subset T~|~t_{\ell}=1\}$ fixed points on  $\overline{\mathcal{M}}^{\theta}(2,\ell)$ are $\{p\in\overline{\mathcal{M}}^{\theta}(2,\ell)~|~X_1=\hdots =X_{\ell-1}=Y_1=\hdots =Y_{\ell-1}=0\}\simeq T^{*}\mathbb{P}^{1}$ and $2\ell-2$ copies of $\mathbb{C}^{2}$. Indeed, $X_{\ell}$ and $Y_{\ell}$ now preserve the weights of weight vectors. Therefore, there are two possibilities:
	\begin{enumerate}
		\item[(i)] the vector space $V=V_0$, so $X_{\neq \ell}=Y_{\neq \ell}=0$ and we arrive at  $T^{*}\mathbb{P}^{1}$ described above;
		\item[(ii)]
		$V$ is spanned by $v_{0}\in V_0$ and $v_1\in V_{\pm\varepsilon_s}$, in which case $X_{\ell}=\left(\begin{array}{cc}
		a & 0 \\
		0 & -a\end{array}\right), Y_{\ell}=\left(\begin{array}{cc}
		b & 0 \\
		0 & -b\end{array}\right)$, one of $X_{s}$ or $Y_{s}$ is $\left(\begin{array}{cc}
		0 & 0 \\
		1 & 0\end{array}\right)$ (depending on the sign of the corresponding weight of $v_{1}$), the other $X$'s and $Y$'s as well as $i$ are $0$ and $j=\left(\begin{array}{c}
		1 \\
		0 \end{array}\right)$. Since the remaining action of $G$ is trivial and $s\in \{1,\hdots,\ell-1\}$, this gives rise to  $2\ell-2$ copies of $\mathbb{C}^{2}$.
	\end{enumerate}
\end{rmk}
\begin{prop}
\label{RestrParam'}
Let $\nu_0$ and $\nu'$ be the one-parameter subgroups from Theorem \ref{AllFP}.
\begin{enumerate}
\item[$(a)$] We have an isomorphism of algebras $C_{\nu_0}(\overline{\mathcal{A}}_{\lambda}(2,\ell))\simeq \overline{\mathcal{A}}_{\lambda}(2,\ell-1)\oplus \mathcal{D}(\mathbb{C}^{2\ell-2})\oplus \mathcal{D}(\mathbb{C}^{2\ell-2})$, where $\overline{\mathcal{A}}_{\lambda}(2,\ell-1)$ is a quantization of $Z=\overline{\mathcal{M}}^{\theta}(2,\ell-1)$.
\par
\item[$(b)$]  Similarly, $C_{\nu'}(\overline{\mathcal{A}}_{\lambda}(2,\ell))\simeq\mathcal{A}^{Z_1}_{\lambda+1-\frac{\ell}{2}}\oplus\mathcal{A}^{Z_2}_{\lambda+\frac{\ell}{2}}$, where $Z_1, Z_2$ are the fixed components for $\nu'$ and $\mathcal{A}^{Z_i}_{\mu}$ stands for the quantization of $Z_i$ with period $\mu$.
\end{enumerate}
\end{prop}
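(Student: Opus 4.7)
The plan is to use the standard fact that for a Hamiltonian $\nu(\mathbb{C}^{*})$-action on the symplectic variety $X=\overline{\mathcal{M}}^{\theta}(2,\ell)$ and a filtered quantization $\mathcal{A}$ of $\mathbb{C}[X_0]$, the Cartan subquotient $C_{\nu}(\mathcal{A})$ is itself a filtered quantization whose associated graded is $\mathbb{C}[X^{\nu(\mathbb{C}^{*})}]$, and it splits as a direct sum of quantizations of the connected components of $X^{\nu(\mathbb{C}^{*})}$. Using the realization $\overline{\mathcal{A}}_{\lambda}(2,\ell)=(D(\overline{R})/D(\overline{R})\{\Phi(x)-\lambda(x)\})^G$, the summand corresponding to a fixed component $Z$ should arise as a quantum Hamiltonian reduction of $D((T^*\overline{R})^{\tilde{\eta}_p})$ by the centralizer $Z_{\eta_p}$ (for a chosen base point $p\in Z$), at a parameter equal to the restriction of $\lambda$ to $\mathfrak{z}_{\eta_p}$ shifted by half the character of $\eta_p$ on the determinant of the normal $\nu$-weight spaces inside $T^*\overline{R}$.

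For part $(a)$ I would apply this componentwise using Theorem \ref{AllFP}. For $Z_0=\overline{\mathcal{M}}^{\theta}(2,\ell-1)$ the gauge $\eta_p$ is trivial, $Z_{\eta_p}=G=GL(V)$, and $(T^*\overline{R})^{\tilde{\eta}_p}=\mathfrak{sl}_2^{\oplus 2(\ell-1)}\oplus V^{*}\oplus V$ is exactly the ambient space for the $(\ell{-}1)$-loop bouquet, so the reduction at parameter $\lambda$ is by definition $\overline{\mathcal{A}}_{\lambda}(2,\ell-1)$; since $\eta_p$ is trivial there is no period shift. For each of the two affine components $Z_{\pm}\simeq\mathbb{C}^{2\ell-2}$ one has $\eta_p=\operatorname{diag}(1,t^{\pm d})$ with $Z_{\eta_p}$ the maximal torus of $G$, and the fixed subspace of $T^*\overline{R}$ reduces to the Cartan parts $\mathfrak{h}^{\oplus 2(\ell-1)}$ of the remaining matrix copies together with framing data that are rigidified by the moment map. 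Reducing the corresponding polynomial Weyl algebra by this two-dimensional torus should yield $\mathcal{D}(\mathbb{C}^{2\ell-2})$, with the $\lambda$-dependence absorbed into the normalization of the surviving framing variables.

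For part $(b)$, the same scheme applies to $\nu'$: each of the two fixed components $Z_1,Z_2$ carries a natural quantization $\mathcal{A}^{Z_i}_{\mu}$, and the summand of $C_{\nu'}(\overline{\mathcal{A}}_{\lambda}(2,\ell))$ attached to $Z_i$ is $\mathcal{A}^{Z_i}_{\mu}$ at the shifted parameter $\mu=\lambda+\tfrac{1}{2}\chi_i$, where $\chi_i$ is the character of $\eta_{p_i}$ on the top exterior power of the normal $\nu'$-weight subspace of $T^*\overline{R}$ at a base point of $Z_i$. A direct weight count over the $2\ell$ copies of $\mathfrak{sl}_2$ together with the contributions of $V\oplus V^{*}$ should give $\chi_1=2-\ell$ and $\chi_2=\ell$, producing the claimed shifts $\lambda+1-\tfrac{\ell}{2}$ and $\lambda+\tfrac{\ell}{2}$.

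The main obstacle will be the careful bookkeeping of the period shift. One has to verify that the general quantum reduction formula applies with the conventions of the paper (the choice of $\Phi$ rather than $\Phi^{\mathrm{sym}}$, hence the presence of the $\zeta=\tfrac12\operatorname{tr}$ correction), and then correctly sum the $\eta_p$-weights of the normal directions in $T^*\overline{R}$ for each fixed component. Once these weight computations are carried out, the identification of each quantum reduction with the named quantization is immediate from the explicit descriptions of the fixed loci given in Theorem \ref{AllFP} and Remark \ref{SmTFPpp}.
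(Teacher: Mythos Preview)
Your plan is essentially the paper's approach: both decompose $C_{\nu}(\overline{\mathcal{A}}_{\lambda})$ over the fixed components and compute the quantization parameter on each component via a weight/Chern-class count. The paper packages this by invoking Proposition~2.2 of \cite{Los16}, which gives $C_{\nu}(\overline{\mathcal{A}}^{sym}_{\lambda})=\bigoplus_k \mathcal{A}^{Z_k}_{i_{Z_k}^*(\lambda)-\rho_{Z_k}}$ with $\rho_{Z_k}$ equal to half the first Chern class of the \emph{contracting} bundle of $Z_k$, and then computes that Chern class using the description of the tangent bundle of a quiver variety as the middle cohomology of the three-term complex $\mathfrak{g}\hookrightarrow T^*\overline{R}\twoheadrightarrow\mathfrak{g}$.

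Your formulation of the shift needs sharpening in two places. First, the relevant weight space is only the \emph{positive} $\nu$-weight part (the contracting directions), not the full normal bundle inside $T^*\overline{R}$; if you take all nonzero $\nu$-weights the symplectic pairing forces the determinant character to vanish. Second, and more substantively, the contracting bundle on $Z_k$ is not simply $(T^*\overline{R})^{\tilde\eta_p,>0}$ modulo the $Z_{\eta_p}$-action: you must also quotient by two copies of $\mathfrak{g}^{\tilde\eta_p,>0}$ coming from $\operatorname{im}\alpha$ and the failure of $\beta$ to be injective in the tangent complex. For part~(a) this correction is invisible because $\eta_p$ is trivial and $\mathfrak{g}^{\tilde\eta_p,>0}=0$; for part~(b) with $\eta_p=\operatorname{diag}(1,t^{\pm d})$ one has $\mathfrak{g}^{\tilde\eta_p,>0}$ one-dimensional, and omitting the two copies would give the wrong $\rho_{Z_i}$. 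The paper's computation for $Z_1$ finds the nontrivial part of the contracting bundle to be $L_1\otimes\mathbb{C}^{\ell-1}$ (after this subtraction), whence $\rho_{Z_1}=\tfrac{\ell-1}{2}c_{Z_1}$ and the period $\lambda+\tfrac12-\tfrac{\ell-1}{2}=\lambda+1-\tfrac{\ell}{2}$. Once you incorporate this correction your weight count will match.
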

\begin{proof}
Proposition $2.2$ \cite{Los16} asserts that $C_{\nu_0}(\overline{\mathcal{A}}^{sym}_{\lambda}(2,\ell))=\underset{k}{\oplus}\mathcal{A}^{Z_k}_{i_{Z_k}^{*}(\lambda)-\rho_{Z_k}}$, where $Z_k$'s are the irreducible components of  $\overline{\mathcal{M}}^{\theta}(2,\ell)^{\nu_0}$ and $\mathcal{A}^{Z_k}_{i_{Z_k}^{*}(\lambda)-\rho_{Z_k}}$ stands for the algebra of global sections of the filtered quantization of $Z_k$ with period ${i_{Z_k}^{*}(\lambda)-\rho_{Z_k}}$. Here  $i_{Z}^{*}$ is the pull-back map $H^2(\overline{\mathcal{M}}^{\theta}(2,\ell),\mathbb{C})\rightarrow H^2(Z,\mathbb{C})$ and $\rho_{Z_k}$ equals half of the 1st Chern class of the contracting bundle of $Z_k$. We start with describing this bundle in our case. For the general description of tangent spaces to quiver varieties we refer to  Lemma $3.10$ and Corollary $3.12$ in \cite{Nak}. The tangent bundle
 descends from the $G$-module ker $\beta$ / im $\alpha$, where $\alpha$ and $\beta$ are in the following complex:
\begin{equation}
\label{tangent}
\Hom(V,V) \stackrel{\alpha}{\hookrightarrow} \mathfrak{sl}_2\otimes \mathbb{C}^{2\ell}\oplus V\oplus V^* \overset{\beta}{\twoheadrightarrow} \Hom(V,V),
\end{equation}
here  $\alpha$ stands for the differential of the $G$-action and  $\beta$ is the differential of the moment map at that fixed point.
\par
It is not hard to observe that the sequence \eqref{tangent} is equivariant with respect to the $(\mathbb{C}^{*})^{\ell}$-action with $\beta$ surjective and $\alpha$ injective. 
\par
We proceed with verifying the assertion of $(a)$. As every bundle over the $\mathbb{C}^{2\ell-2}$ component of $Z$ is trivial, we look at the restriction of the contracting bundle to $\overline{\mathcal{M}}^{\theta}(2,\ell-1)$. 
\par

  It follows from the description of the tangent bundle as the middle cohomology of the complex \eqref{tangent}  that the contracting bundle descends under $G$-action from $T^*\overline{R}^{\tilde{\eta}_p, >0}$ modulo two copies of $\mathfrak{g}^{\tilde{\eta}_p, >0}$. In our case $(T^*\overline{R})^{\tilde{\eta}_p, >0}=H$ is the three-dimensional space $Vec(X_1)$, while  $\mathfrak{g}$ is pointwise fixed under the action of $\tilde{\eta}_p$, hence, the contracting bundle descends from  $H$.

 The top exterior power of the vector bundle $\widetilde{H}$ descending from  $H$ under $G$-action is trivial, since $G$ acts trivially on the top exterior power of $H$. By \cite{LosQuant}, Section $5$,
 the period of a quantization $\overline{\mathcal{A}}_{\lambda}(2,\ell)$ is $\lambda-\zeta$, where $\zeta$ is half the character of the action of $G$ on
 $\Lambda^{top}\overline{R}$. Thus the periods of the quantizations  $\overline{\mathcal{A}}_{\lambda}(2,\ell)$ and  $\overline{\mathcal{A}}_{\lambda}(2,\ell-1)$
 are both equal to $\lambda+\frac{1}{2}$, the first claim of the proposition follows.

We verify the claim in $(b)$ for $Z_1$.  There is a line subbundle $L_{triv} \subset \tilde{V}$ with the fiber over a point $p\in Z_1$ being $\mbox{im}~j$. It is trivial, since for a fixed $0\neq w\in W$ one has a nowhere vanishing section $j(w)$ of $\tilde{V}$. Using the splitting principle, we write   $\tilde{V}=L_{triv}\oplus L_1$ with $c_1(L_{triv})=0$ and $c_1(L_1)=c_{Z_1}$, where $c_{Z_1}$ is the generator of $H^2(Z_1)$.  In this case $V=\mathbb{C}\langle v_0,v_1\rangle$ with $v_0$ of weight $0$ and $v_1$ of weight $d$, in other words, $\eta_p=\left(\begin{array}{cc}
1 & 0 \\
0 & t^d \end{array}\right)$ in the basis $\langle v_0, v_1\rangle$. This implies that the bundle on $Z_1$ descending from $\mathfrak{sl}_2$ is $L_{triv}\oplus L_1\oplus L_1^*$. 
Let $\tilde{\eta}_p(t)=(\nu'(t),\eta_p(t))\subset T\times G$, then $U^{\tilde{\eta}_p, >0}=(z_1,\hdots, z_{\ell},v_1)$, where $z_s$ is the $12$-entry (first row and second column) of the matrix $X_s$, while $\mathfrak{g}^{\tilde{\eta}_p, >0}$ consists of the $12$-entry of the corresponding matrix. Hence, the nontrivial part of the contracting bundle is $L_1\otimes\mathbb{C}^{\ell-1}$. Thus we  conclude
 that $\rho_{Z_1}=\frac{\ell-1}{2}c_{Z_1}$. 
\par
 Analogously one can show that the nontrivial part of the contracting bundle on $Z_2$ is $L_1^*\otimes\mathbb{C}^{\ell-1}$ and  $\rho_{Z_2}=\frac{1-\ell}{2}c_{Z_2}$. The maps $i_{Z_1}^{*}$ and $i_{Z_2}^{*}$ send $c_1(\tilde{V})\in H^2(\overline{\mathcal{M}}^{\theta}(2,\ell))$ to the generators $c_{Z_1}\in H^2(Z_1)$ and $c_{Z_2}\in H^2(Z_2)$. The claim in $(b)$ follows.
\end{proof}
\begin{rmk}
\label{PeriodsOnProjSpace}
The quantizations $\mathcal{A}^{Z_1}_{\lambda+1-\frac{\ell}{2}}\mbox{ and }\mathcal{A}^{Z_2}_{\lambda+\frac{\ell}{2}}$ are isomorphic to $\mathcal{D}^{\lambda-\ell+1}(\mathbb{P}^{\ell-1})$ and $\mathcal{D}^{\lambda}(\mathbb{P}^{\ell-1})$ (the algebras of twisted differential operators on projective spaces).
\end{rmk}
The next lemma provides some information on the preimages of zero under $\bar{\rho}:\overline{\mathcal{M}}^{\theta}(n,\ell)\rightarrow \overline{\mathcal{M}}(n,\ell)$ (central fibers) in $\overline{\mathcal{M}}^{\theta}(n,\ell)$. 
\begin{lem} 
\label{Preimage}
\begin{enumerate}
	\item[(a)]  The preimage of $0$ in $\overline{\mathcal{M}}^{\theta}(2,\ell)$ is $\bar{\rho}^{-1}(0)=\mathbb{P}^{2\ell-1}$.
 \item[(b)]  Let $n,\ell>1$, then $\mbox{dim}(\bar{\rho}^{-1}(0))\leq\frac{1}{2}\mbox{dim}(\overline{\mathcal{M}}^{\theta}(n,\ell))$.
 \end{enumerate} 
\end{lem}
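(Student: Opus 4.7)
The plan is to treat (a) and (b) separately: part (a) via an explicit parametrization of the semistable points in the nullcone of the $G$-action on $T^*\overline{R}$, and part (b) via the general isotropy of fibers of a symplectic resolution.

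For part (a), a $G$-orbit $[(X_1,\ldots,X_\ell,Y_1,\ldots,Y_\ell,i,j)]$ lies in $\bar\rho^{-1}(0)$ iff every $G$-invariant regular function on $\mu^{-1}(0)$ vanishes on the representative, equivalently $0\in\overline{G\cdot(X,Y,i,j)}$ inside $T^*\overline R$. By Hilbert--Mumford this is the existence of a one-parameter subgroup $\eta\colon\mathbb{C}^*\to G=GL_2$ with $\lim_{t\to 0}\eta(t)\cdot(X,Y,i,j)=0$, and after $G$-conjugation we can take $\eta(t)=\mathrm{diag}(t^{w_1},t^{w_2})$. A weight-space analysis of the components, parallel in spirit to Case $2$ of the proof of Theorem \ref{FixedPts}, then forces the $X_k,Y_k$ to be simultaneously strictly triangular, of the form $X_k=a_kN$ and $Y_k=b_kN$ with $N$ a fixed nilpotent in $\mathfrak{sl}_2$ (the upper- vs.\ lower-triangular choice is absorbed by the Weyl element). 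Since $[N,N]=0$, the moment map equation $\sum_k[X_k,Y_k]-ji=0$ collapses to $ji=0$, so $i=0$ or $j=0$.

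The case $j=0$ is ruled out by $\theta=\det^{-1}$ semistability (any such point admits a destabilizing one-parameter subgroup), while in the case $i=0$ the same stability condition forces $j$ to have a nonzero component outside $\mathrm{im}\,N$ together with $(a_1,\ldots,a_\ell,b_1,\ldots,b_\ell)\neq 0$, so that the $X,Y$-words applied to $\mathbb{C}\cdot j$ span all of $V$. It remains to quotient by the residual $G$-action: the stabilizer of the triangular form is the Borel $B^-\subset G$, and using $B^-$ we normalize $j=e_1$. The remaining stabilizer is a single $\mathbb{C}^*$ which rescales $N$ and hence acts on $(a_1,\ldots,b_\ell)$ by simultaneous dilation. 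The quotient of the locus of stable tuples $\mathbb{C}^{2\ell}\setminus\{0\}$ by this $\mathbb{C}^*$ is exactly $\mathbb{P}^{2\ell-1}$.

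For part (b), since $\bar\rho$ is a projective symplectic resolution, it is semismall by Kaledin's theorem, and consequently every fiber is isotropic with respect to the symplectic form on $\overline{\mathcal{M}}^\theta(n,\ell)$. Applied to the fiber over $0\in\overline{\mathcal{M}}(n,\ell)$, this yields $\dim\bar\rho^{-1}(0)\leq\tfrac12\dim\overline{\mathcal{M}}^{\theta}(n,\ell)$. The main obstacle is the case analysis in part (a): one must verify carefully that Hilbert--Mumford reduction, combined with the moment map equation and the GIT stability for $\theta=\det^{-1}$, really leaves no semistable orbits other than the ones described. The dimension $n=2$ keeps the weight-space combinatorics on $V=\mathbb{C}^2$ extremely constrained and is exactly where that hypothesis enters.
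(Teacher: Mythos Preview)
Your proof of part~(a) is correct and follows essentially the same route as the paper: both arguments use the Hilbert--Mumford criterion to reduce to simultaneously strictly triangular $X_k, Y_k$ (the paper phrases this via a Jordan--H\"older filtration of the underlying representation, which amounts to the same weight-space analysis you carry out), deduce $i=0$ from the moment map together with stability, normalize $j$ using the Borel, and identify the residual $\mathbb{C}^*$-quotient with $\mathbb{P}^{2\ell-1}$.

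For part~(b) your approach is genuinely different. The paper performs an explicit dimension count: it bounds $\dim\bar\rho^{-1}(0)$ by the dimension of the locus of tuples of strictly upper-triangular $n\times n$ matrices satisfying the moment map equation, modulo the Borel action, obtaining $(n^2-n)\ell - n^2 + 2n - 1$, and then checks directly that this is \emph{strictly} less than $\tfrac12\dim\overline{\mathcal{M}}^\theta(n,\ell)=(\ell-1)n^2-\ell+n$ for $n,\ell>1$. Your invocation of Kaledin's semismallness theorem for symplectic resolutions is cleaner and more conceptual, and it does establish the non-strict inequality stated in the lemma. However, the paper's hands-on computation gives the strict inequality, which is what is actually used downstream: Corollary~\ref{LagrnotIsotr} (the central fiber is isotropic but \emph{not} Lagrangian) and Remark~\ref{CounterEx} (the vanishing of $H^{top}$) both rely on $\dim\bar\rho^{-1}(0)<\tfrac12\dim\overline{\mathcal{M}}^\theta(n,\ell)$. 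So while your argument proves the lemma as literally stated, you would need to supplement it---e.g.\ by exhibiting a point of $\overline{\mathcal{M}}(n,\ell)$ on a positive-dimensional leaf whose fiber has the maximal dimension allowed by semismallness, or by reverting to a direct count---to recover those consequences.
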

\begin{proof}
An application of the Hilbert-Mumford criterion shows (the argument is analogous to the one in Proposition $9.7.4.$ in \cite{DW})  that $p\in \overline{\mathcal{M}}^{\theta}(n,\ell)$ lies in $\bar{\rho}^{-1}(0)$ if and only if on the corresponding representation there exists a filtration $0 = L_0 \subset L_1 \subset  L_2 \subset \hdots \subset  L_n  = r_p \in T^*\overline{R}$ by subrepresentations such that each quotient $L_i/L_{i-1}$ for $i<n$  is isomorphic to a simple representation (of the framed quiver $\widehat{B}_{2\ell}$) with dimension vector $\left(\begin{array}{c}
0 \\
1\end{array}\right)$ and $L_n/L_{n-1}$ is isomorphic to the simple representation with dimension vector $\left(\begin{array}{c}
1 \\
0\end{array}\right)$ (here the top coordinate corresponds to the dimension of framing and the bottom to the dimension of $V$). This implies that all the $\mathfrak{sl}_n$-components of $p$ must be strictly upper-triangular matrices. It follows from equation $\eqref{eq:(moment)}$ and our choice of stability condition, that $i=0$.

$(a)$ Pick a vector $0\neq h \in im~j$. As $h$ is a cyclic vector, it must have a nontrivial projection onto $V/V_1$.  The action by matrices of the form $\left(\begin{array}{cc}
1 & \alpha \\
0 & 1\end{array}\right)$ (conjugation by which does not change any of the $2\times 2$  matrices of $p$) allows to assume that the component of $h$ along the first vector is zero. Acting by  $\left(\begin{array}{cc}
1 & 0 \\
0 & *\end{array}\right) \subset GL_{2}$, allows to  pick a representative of $p$ with $j=\left(\begin{array}{c}
0 \\
1\end{array}\right) $  and the action by $\left(\begin{array}{cc}
* & 0 \\
0 & 1\end{array}\right) \subset GL_{2}$ to simultaneously rescale the $2\times 2$  matrices of $p$.
We conclude that 
$p=(X_{1}=\left(\begin{array}{cc}
0 & a_{1} \\
0 & 0\end{array}\right), \hdots, X_{\ell}=\left(\begin{array}{cc}
0 & a_{\ell} \\
0 & 0\end{array}\right),Y_{1}=\left(\begin{array}{cc}
0 & a_{\ell+1} \\
0 & 0\end{array}\right), \hdots, Y_{\ell}=\left(\begin{array}{cc}
0 & a_{2\ell} \\
0 & 0\end{array}\right), i=0, j=\left(\begin{array}{c}
0  \\
1\end{array}\right))$ with at least one of $X_{k}$'s and $Y_{s}$'s, being nonzero due to the stability condition, up to simultaneous dilations of $X_{k}$'s and $Y_{s}$'s, which shows the claim, stated in $(a)$.
 \par
Now we show the claim in $(b)$. Acting by matrices of the form $\left(\begin{array}{cccccc}
1 & 0 & \hdots & 0 & * \\
0 & 1 & \hdots & 0 & *  \\ 
\vdots & \vdots & \ddots & \vdots & \vdots \\
0 & 0 & \hdots & 1 & * \\
0 & 0 & \hdots & 0 & 1 \\
\end{array}\right)$, we can assume that $h$ is proportional to the last vector in the basis. The action by the subgroup $$\left(\begin{array}{cccccc}
1 & 0 & \hdots & 0 & 0 \\
0 & 1 & \hdots & 0 & 0  \\ 
\vdots & \vdots & \ddots & \vdots & \vdots \\
0 & 0 & \hdots & 1 & 0 \\
0 & 0 & \hdots & 0 & * \\
\end{array}\right)\subset GL_{n}$$ allows to assume $ j=\left(\begin{array}{c}
0\\
\vdots\\
0\\
1\end{array}\right)$. 
\par
Since $i=0$, the moment equation $\eqref{eq:(moment)}$ reduces to $\sum\limits_{k=0}^{\ell}[X_k,Y_k]=0$ and as each of the commutators is a matrix of the form $$[X_k,Y_k]=\left(\begin{array}{cccccc}
0 & 0 & * & * & \hdots   & * \\
0 & 0 & 0 & * &\hdots & *  \\ 
\vdots & \vdots  & \vdots & \ddots & \ddots & \vdots \\
0 & 0 & 0 & 0 &\hdots & *  \\ 
0 & 0 & 0 & 0 &\hdots & 0  \\ 
\end{array}\right),$$
equation $\eqref{eq:(moment)}$ imposes $\frac{(n-1)(n-2)}{2}$ independent conditions on the coordinates of $p\in \bar{\rho}^{-1}(0)$. The action of matrices of the form 
$$\left(\begin{array}{cccccc}
* & * & \hdots & * & 0 \\
0 & * & \hdots & * & 0  \\ 
\vdots & \vdots & \ddots & \vdots & \vdots \\
0 & 0 & \hdots & * & 0 \\
0 & 0 & \hdots & 0 & 1 \\
\end{array}\right)$$
preserves both $j$ and the strictly upper-triangular matrices and reduces the dimension  by $\frac{n(n-1)}{2}$. Therefore, we have established that $$\mbox{dim}(\bar{\rho}^{-1}(0))\leq \frac{n(n-1)}{2}2\ell-\frac{(n-1)(n-2)}{2}-\frac{n(n-1)}{2}=(n^2-n)\ell-n^2+2n-1$$
and a straightforward computation shows that $(n^2-n)\ell-n^2+2n-1<(\ell-1)n^2-\ell+n=\frac{1}{2}\mbox{dim}(\overline{\mathcal{M}}^{\theta}(n,\ell))$ provided $n,\ell>1$.
\end{proof}
 \begin{cor}
	\label{LagrnotIsotr}
	Assume $n,\ell>1$, then the central fiber $\bar{\rho}^{-1}(0)\subset \overline{\mathcal{M}}^{\theta}(n,\ell)$ is an isotropic but not Lagrangian subvariety.
\end{cor}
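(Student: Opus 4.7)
The proof splits into two independent pieces corresponding to the two assertions, and I will handle them separately.

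\textbf{Non-Lagrangianness.} The plan here is to extract the strict dimension bound already contained in the proof of Lemma \ref{Preimage}(b). Although the statement of that lemma records only $\dim \bar{\rho}^{-1}(0) \le \tfrac{1}{2}\dim \overline{\mathcal{M}}^\theta(n,\ell)$, the computation actually produces
\[
\dim \bar{\rho}^{-1}(0) \le (n^2-n)\ell - n^2 + 2n - 1 < (\ell-1)n^2 - \ell + n = \tfrac{1}{2}\dim\overline{\mathcal{M}}^\theta(n,\ell)
\]
for $n,\ell>1$. Since any Lagrangian subvariety of a symplectic variety has dimension exactly half the ambient, this strict inequality already rules out Lagrangianness, with essentially no further work required.

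\textbf{Isotropy.} Here I would invoke the general theory of projective symplectic resolutions. The morphism $\bar{\rho}$ is a projective Poisson map from the smooth symplectic variety $\overline{\mathcal{M}}^\theta(n,\ell)$ to the affine symplectic singularity $\overline{\mathcal{M}}(n,\ell)$, i.e.\ a projective symplectic resolution, and Kaledin's theorem on isotropy of fibers over symplectic leaves then implies that every irreducible component of the preimage of the zero-dimensional leaf $\{0\}$ is isotropic. For a more hands-on verification I would exploit the contracting $\mathbb{S}$-action: each irreducible component $Z \subseteq \bar{\rho}^{-1}(0)$ is projective and (since $\mathbb{S}$ is connected) $\mathbb{S}$-stable, so $Z^\mathbb{S}$ is nonempty, and at a smooth fixed point $p_0 \in Z$ attracting a dense Bia\l ynicki--Birula cell the tangent space $T_{p_0}Z$ consists purely of strictly positive $\mathbb{S}$-weights, while $\omega$ has $\mathbb{S}$-weight $-2$; the pairing condition $a+b=-2$ cannot hold for positive $a,b$, forcing $\omega|_{T_{p_0}Z}=0$, and density then propagates the vanishing across $Z^{\mathrm{sm}}$.

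\textbf{Anticipated obstacle.} The main subtlety in the hands-on route is ensuring the existence of a \emph{smooth} $\mathbb{S}$-fixed attracting point in $Z$ with all tangent weights strictly positive, because $Z$ may be singular precisely along its $\mathbb{S}$-fixed locus and may possess several attracting fixed components. This is exactly where the appeal to Kaledin's general theorem is cleanest. In the case $n=2$ addressed by Lemma \ref{Preimage}(a), we have $\bar{\rho}^{-1}(0) = \mathbb{P}^{2\ell-1}$, which is smooth with $H^0(\mathbb{P}^{2\ell-1},\Omega^2)=0$ by Bott's formula, so the isotropy is automatic and no subtlety arises there.
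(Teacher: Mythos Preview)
Your proposal is correct and matches the paper's implicit reasoning: the corollary carries no proof in the paper, the strict inequality established inside the proof of Lemma~\ref{Preimage}(b) supplying non-Lagrangianness and the isotropy of fibers being taken as a standard fact about symplectic resolutions. Your secondary hands-on argument can also be completed---the propagation step follows because in $\mathbb{S}$-weight coordinates near $p_0$ the coefficients of $\omega|_{Z}$ would have to carry strictly negative weight, which a holomorphic function at that attracting fixed point cannot, whence $\omega|_Z$ vanishes in a neighborhood and then on all of $Z^{\mathrm{sm}}$ by irreducibility---but, as you note, invoking Kaledin is the cleanest route.
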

\begin{rmk}
 The $T$ - fixed points for the action on $\overline{\mathcal{M}}^{\theta}(2,\ell)$ (see Theorem $2.1$)  lie on $\overline{\rho}^{-1}(0)=\mathbb{P}^{2\ell-1}$.
 \end{rmk}
 \par
 \begin{cor}
 \label{H2}
 $H^2(\overline{\mathcal{M}}^{\theta}(2,\ell))\simeq \mathbb{C}$.
 \end{cor}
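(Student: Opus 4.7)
The plan is to deduce the cohomology of $\overline{\mathcal{M}}^{\theta}(2,\ell)$ from that of the central fiber $\bar{\rho}^{-1}(0)$, which was identified in Lemma \ref{Preimage}$(a)$ as $\mathbb{P}^{2\ell-1}$. The key input is the contracting $\mathbb{S}$-action that is part of the conical symplectic resolution datum: since $\mathbb{C}[\overline{\mathcal{M}}(2,\ell)]$ is non-negatively graded with $\mathbb{C}[\overline{\mathcal{M}}(2,\ell)]_0=\mathbb{C}$, the $\mathbb{S}$-action on the affine base retracts it onto the unique fixed point $0$.

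First I would invoke the general fact that for a conical symplectic resolution $\bar{\rho}:X\to X_0$, the $\mathbb{S}$-action on $X_0$ admits a $\bar{\rho}$-equivariant lift to $X$, and because $\bar{\rho}$ is projective, the lifted flow extends continuously across $s=0$ and contracts $X$ onto $\bar{\rho}^{-1}(0)$. Concretely, the map $X\times[0,1]\to X$ defined by the analytic continuation of the $\mathbb{S}$-action (restricted to the positive reals and extended to $s=0$ by the retraction to the central fiber) realizes a deformation retraction of $X$ onto $\bar{\rho}^{-1}(0)$. Applied to our situation, this yields a homotopy equivalence
\[
\overline{\mathcal{M}}^{\theta}(2,\ell)\;\simeq\;\bar{\rho}^{-1}(0).
\]

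Next, combining this homotopy equivalence with Lemma \ref{Preimage}$(a)$ gives
\[
H^{*}(\overline{\mathcal{M}}^{\theta}(2,\ell),\mathbb{C})\;\cong\;H^{*}(\bar{\rho}^{-1}(0),\mathbb{C})\;\cong\;H^{*}(\mathbb{P}^{2\ell-1},\mathbb{C}).
\]
Taking degree $2$ and recalling $H^2(\mathbb{P}^{2\ell-1},\mathbb{C})\cong\mathbb{C}$ yields the desired isomorphism $H^2(\overline{\mathcal{M}}^{\theta}(2,\ell))\cong\mathbb{C}$.

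The only delicate point is justifying the deformation retraction onto the central fiber; this is standard for conical symplectic resolutions (and one could cite, for instance, the discussion in \cite{BLPW} or Kaledin's papers), so the proof is essentially a one-line appeal to this fact plus the explicit identification $\bar{\rho}^{-1}(0)=\mathbb{P}^{2\ell-1}$ already established.
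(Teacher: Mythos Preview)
Your proposal is correct and matches the paper's intended argument: the corollary is stated without proof, but the paper makes explicit in Remark~\ref{CounterEx} that $\overline{\mathcal{M}}^{\theta}(n,\ell)$ is homotopy equivalent to $\bar{\rho}^{-1}(0)$ via the contracting $\mathbb{C}^*$-action, so combining this with Lemma~\ref{Preimage}(a) and $H^2(\mathbb{P}^{2\ell-1})\cong\mathbb{C}$ is exactly the route.
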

\begin{cor} There are no finite dimensional $\overline{\mathcal{A}}_{\lambda}(n,\ell)$-modules for  $n, \ell>1$  and generic $\nu$.
\end{cor}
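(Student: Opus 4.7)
The plan is to argue by contradiction: a nonzero finite-dimensional $\overline{\mathcal{A}}_\lambda(n,\ell)$-module would, upon passage to the symplectic resolution, produce a coherent $\overline{\mathcal{A}}^\theta_\lambda$-module whose singular support is forced to lie inside the central fiber $\bar{\rho}^{-1}(0)$. Gabber's involutivity theorem will then produce a lower bound on the dimension of that support which directly contradicts the strict inequality established in Lemma \ref{Preimage}(b).

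Concretely, I would first equip a hypothetical nonzero finite-dimensional module $M$ with a good filtration, so that $\mathrm{gr}\,M$ is a finite-dimensional $\mathbb{C}[\overline{\mathcal{M}}(n,\ell)]$-module. Its support is a closed, $\mathbb{S}$-stable, zero-dimensional subset of $\overline{\mathcal{M}}(n,\ell)$; since the contracting action of $\mathbb{S}$ retracts the whole affine variety to the cone point, this support must equal $\{0\}$. I would then set $\mathcal{M}:=\mathrm{Loc}^\theta_\lambda(M)$ and check that its singular support --- the support of $\mathrm{gr}\,\mathcal{M}$ as an $\mathcal{O}_{\overline{\mathcal{M}}^\theta(n,\ell)}$-module --- lies in $\bar{\rho}^{-1}(0)$, since localization intertwines the good filtrations on the two sides and $\bar{\rho}$ is the map on associated gradeds. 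Because $\overline{\mathcal{M}}^\theta(n,\ell)$ is smooth and symplectic, Gabber's theorem applied to the coherent $\overline{\mathcal{A}}^\theta_\lambda$-module $\mathcal{M}$ forces its singular support to be coisotropic, hence of dimension at least $\tfrac{1}{2}\dim \overline{\mathcal{M}}^\theta(n,\ell)$. Combined with Lemma \ref{Preimage}(b), which asserts $\dim \bar{\rho}^{-1}(0)<\tfrac{1}{2}\dim \overline{\mathcal{M}}^\theta(n,\ell)$ when $n,\ell>1$, this forces $\mathcal{M}=0$ and yields the desired contradiction --- provided $\mathcal{M}$ was nonzero in the first place.

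The main obstacle is precisely this nonvanishing of $\mathcal{M}$, since one cannot rule out \emph{a priori} that $\mathrm{Loc}^\theta_\lambda$ annihilates $M$. For $\lambda$ lying in the abelian-localization region of Theorem \ref{AbLocHolds}, nonvanishing is automatic because $\mathrm{Loc}^\theta_\lambda$ and $\Gamma_\lambda$ are mutually quasi-inverse equivalences. For the larger set of $\lambda$ for which only derived localization holds (equivalently, those with finite homological dimension, by \cite{MN2}), I would instead work with $L\mathrm{Loc}^\theta_\lambda(M)$ and apply the same argument to any nonzero cohomology sheaf: its support still lies in $\bar{\rho}^{-1}(0)$, and Gabber still forces it to be coisotropic, so the same contradiction goes through. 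The genuinely singular parameters are handled by a specialization argument: existence of a nonzero finite-dimensional $\overline{\mathcal{A}}_\lambda(n,\ell)$-module of bounded dimension cuts out a constructible subset of the $\lambda$-line, and having ruled it out on a Zariski-dense open subset, it must be empty.
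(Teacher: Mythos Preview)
Your core argument---support of a finite-dimensional module is $\{0\}$, hence the localized module is supported in $\bar\rho^{-1}(0)$, and Gabber's involutivity forces this support to be coisotropic, contradicting Lemma~\ref{Preimage}(b)---is exactly the paper's proof. The paper does not address the nonvanishing of $\mathrm{Loc}^\theta_\lambda(M)$ at all; you are being more careful than the paper on this point.

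Your handling of the abelian- and derived-localization regimes is fine. However, the specialization argument for singular $\lambda$ does not close: a constructible subset of the $\lambda$-line that misses a Zariski-dense open is \emph{finite}, not empty. So you have shown only that the set of $\lambda$ admitting a finite-dimensional module is at most finite. This may well be what the paper's (somewhat odd) hypothesis ``generic $\nu$'' is meant to cover---quite possibly a typo for ``generic $\lambda$''---in which case your argument already suffices. If you want the statement for \emph{all} $\lambda$, the specialization step as written is not enough; one would need an additional argument (e.g.\ a degeneration or translation-functor trick) to rule out the finitely many remaining parameters.
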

\begin{proof}
The support of a finite dimensional module must be  $0 \in \overline{\mathcal{M}}(n,\ell)$ (since $0$ is the only fixed point of $\overline{\mathcal{M}}(n,\ell)$ for the scaling $\mathbb{C}^{*}$-action, the support is $\mathbb{C}^{*}$-stable and the module is finite dimensional). Then the support of $Loc_{\lambda}^{\theta}(M)$ must be contained in $\bar{\rho}^{-1}(0) \subset \overline{\mathcal{M}^{\theta}}(n,\ell)$. On the other hand, due to Gabber's involutivity theorem, the support of a  coherent module must be a coisotropic subvariety of $\overline{\mathcal{M}}^{\theta}(n,\ell)$.  However, this is impossible for dimension reasons (dim$(\overline{\mathcal{M}}^{\theta}(2,\ell))=6\ell-4$ and dim$(\overline{\mathcal{M}}^{\theta}(3,\ell))=16\ell-12>2(6\ell-3)$ for $\ell >1$).
\end{proof}
\par
 I would like to thank Pavel Etingof for bringing my attention to the following fact. 
\begin{rmk}
	\label{CounterEx}
The resolutions $\overline{\rho}:\overline{\mathcal{M}}^{\theta}(n,\ell)\rightarrow \overline{\mathcal{M}}(n,\ell)$ serve as counterexamples to Conjecture $1.3.1$ in  \cite{ES}. Indeed, $H^{top}(\overline{\mathcal{M}}^{\theta}(2,\ell),\mathbb{C})=H^{3\ell-2}(\mathbb{P}^{2\ell-1},\mathbb{C})=0$ and, in general, $H^{top}(\overline{\mathcal{M}}^{\theta}(n,\ell),\mathbb{C})=H^{\frac{1}{2}\mbox{dim}(\overline{\mathcal{M}}^{\theta}(n,\ell))}(\overline{\mathcal{M}}^{\theta}(n,\ell),\mathbb{C})=0$, since the variety $\overline{\mathcal{M}}^{\theta}(n,\ell)$ is homotopy equivalent to $\overline{\rho}^{-1}(0)$ (via the contracting $\mathbb{C}^*$-action) and this variety has dimension strictly less than $\frac{1}{2}\mbox{dim}(\overline{\mathcal{M}}^{\theta}(n,\ell))$ as shown in Lemma \ref{Preimage} (b). On the other hand, the point $0$ is a symplectic leaf in affine Poisson varieties $\overline{\mathcal{M}}(n,\ell)$. This is true, since the Poisson bracket is of degree $-2$ and there are no invariant functions of degree one in $\mathbb{C}[T^*\bar{R}]^G$, hence, the maximal ideal of $0$ is Poisson. From this it follows that the vector spaces  $HP_0(\mathcal{O}(\overline{\mathcal{M}}(n,\ell)))$ and are at least $1-$dimensional. Here $HP_0(\mathcal{O}(X))=\mathcal{O}(X)/\{\mathcal{O}(X),\mathcal{O}(X)\}$ stands for the zeroth Poisson homology of an affine Poisson variety $X$. Therefore, $H^{top}(\overline{\mathcal{M}}(n,\ell))\neq HP_0(\overline{\mathcal{M}}(n,\ell))$, contradicting the assertion in part $(a)$ of the conjecture.
\end{rmk}

\section{Symplectic leaves and slices}
\subsection{Symplectic leaves}
\par
First we describe the symplectic leaves and slices to them for the Poisson varieties $\overline{\mathcal{M}}(2,\ell)$ and $\overline{\mathcal{M}}(3,\ell)$. The general description was given by Nakajima, it can also be found in Section $2$ of \cite{BezrLos}. In particular (Section $6$ of \cite{Nak94} or Section $3$ of \cite{Nak}), it was shown that $$\overline{\mathcal{M}}(n,\ell)=\underset{\hat{G}\subseteq G}{\bigcup}\overline{\mathcal{M}}(n,\ell)_{\hat{G}},$$
where the strata are parametrized by reductive subgroups $\hat{G}\subseteq G$ and $\overline{\mathcal{M}}(n,\ell)_{\hat{G}}$ stands for the locus of isomorphism classes of semisimple representations, whose stabilizer is conjugate to $\hat{G}$. A semisimple representation $r \in T^{*}R$ is in  $\overline{\mathcal{M}}(n,\ell)_{\hat{G}}$, if it can be decomposed as $r=r^{0}\underset{i=1}{\overset{n}{\oplus}} r^{i}\otimes U_{i}$, where $r_{i}$'s are simple and pairwise nonisomorphic with zero-dimensional framing and $U_{i}$'s are their multiplicity spaces, and $\hat{G}$ is conjugate to $\prod GL(U_{i})$. Moreover, according to Theorem $1.3$ of \cite{Cr-B}, the stratum $\overline{\mathcal{M}}(n,\ell)_{\hat{G}}$ is an irreducible locally closed subset of $\overline{\mathcal{M}}(n,\ell)$. Each stratum $\overline{\mathcal{M}}(n,\ell)_{\hat{G}}$, being irreducible, must be a symplectic leaf. The information about the symplectic leaves of $\overline{\mathcal{M}}(2,\ell)$ and $\overline{\mathcal{M}}(3,\ell)$  is summarized in the tables below. 
\begin{rmk}
We would like to notice that  there are no irreducible representations with dimension vector $(1,1)$, as each summand $[X_{k},Y_{k}]$ in equation $\eqref{eq:(moment)}$ equals zero and, therefore, $ji=0$ as well, forcing $i=0$ or $j=0$ (or  $i=j=0$) and making the representation with dimension vector $(1,0)$ (zero-dimensional framing) in the former case and with dimension vector $(0,1)$ in the latter a subrepresentation.
\end{rmk}
The third leaf in Table $1$ corresponds to representations $r=r^0\oplus r^1\oplus r^2$, while the fourth $r=r^0\oplus r^1\otimes \mathbb{C}^2$, the multiplicities in Table $2$ are indicated in the second column therein.
\begin{rmk}
	\label{A_1SingSlice}
Since $\overline{\mathcal{M}}(2,\ell)$ has a unique symplectic leaf of codimension $2$, the slice to which is an $A_1$ singularity the Namikawa Weyl group (see \cite{NamMain}) of $\overline{\mathcal{M}}(2,\ell)$ is $\mathbb{Z}/2\mathbb{Z}$. As there are no symplectic leaves of codimension $2$ in $\overline{\mathcal{M}}(3,\ell)$, the corresponding Namikawa Weyl group is trivial.
\end{rmk}
\begin{table}[ht]
	\label{SliceTable}
\begin{center}
\begin{tabular}{ |c|c|c|c| } 
 \hline
 type & dim vector & dim of leaf  & stabilizer (in $GL_{2}$)\\ 
  \hline
 1 & (2,1) & $6\ell-4$ & \{id\}\\ 
  \hline
  2 & $(2,0)\oplus(0,1)$ & $6\ell-6$  & $\mathbb{C}^{*}\cdot$id\\ 
 \hline
3 & $(1,0)\oplus(1,0)\oplus(0,1)$ & $2\ell$  & $\left(\begin{array}{cc}
\lambda &  0  \\
0 &   \mu
\end{array}\right)$, $\lambda, \mu \in \mathbb{C}^{*}$\\ 
 \hline
4 & $(1,0)^{\oplus 2}\oplus(0,1)$ & 0  & $GL_{2}$\\ 
 \hline
\end{tabular}
\caption{Symplectic leaves of $\overline{\mathcal{M}}(2,\ell)$}
\end{center}
\end{table}
\begin{table}[ht]
\begin{center}
\begin{tabular}{ |c|c|c|c| } 
 \hline
 type & dim vector & dim of leaf  & stabilizer (in $GL_{3}$)\\ 
  \hline
 1 & (3,1) & $16\ell-12$ & \{id\}\\ 
  \hline
  2 & $(3,0)\oplus(0,1)$ & $16\ell-16$  & $\mathbb{C}^{*}\cdot$id\\ 
 \hline
3 & $(2,1)\oplus(1,0)$ & $6\ell-4$ & $\left(\begin{array}{ccc}
1 &  0 & 0 \\
0 & 1 &  0 \\
0 & 0 &  \nu
\end{array}\right)$, $ \nu \in \mathbb{C}^{*}$\\ 
 \hline
4 & $(2,0)\oplus(1,0)\oplus(0,1)$ & $6\ell-6$   & $\left(\begin{array}{ccc}
\lambda &  0 & 0 \\
0 & \lambda &  0 \\
0 & 0 &  \mu
\end{array}\right)$, $\lambda, \mu \in \mathbb{C}^{*}$\\ 
 \hline
5 & $(1,0)\oplus(1,0)\oplus(1,0)\oplus(0,1)$ & $4\ell$  & $\left(\begin{array}{ccc}
\lambda &  0 & 0 \\
0 & \nu &  0 \\
0 & 0 &  \mu
\end{array}\right)$, $\lambda, \nu, \mu \in \mathbb{C}^{*}$\\ 
 \hline
6 & $(1,0)^{\oplus 2}\oplus(1,0)\oplus(0,1)$ & $2\ell$  & $\left(\begin{array}{ccc}
* &  * & 0 \\
* & * &  0 \\
0 & 0 &  \mu
\end{array}\right)$, $\mu \in \mathbb{C}^{*}$\\ 
 \hline
7 & $(1,0)^{\oplus 3}\oplus(0,1)$ & 0  & $GL_{3}$\\ 
 \hline
\end{tabular}
\caption{Symplectic leaves of $\overline{\mathcal{M}}(3,\ell)$}
\end{center}
\end{table}
\subsection{Fixed points on the slice}\label{SliceSection}
\par
Next we study the slice taken at some point of the leaf of type $3$ in Table $1$ above. This slice is the quiver variety  on the picture below with $k,s \in \{1,\hdots,\ell-1,\ell+1,\hdots,2\ell-1\}$. The dimension vector is $(1,1)$ and the framing is also one-dimensional.
\par
\begin{center}
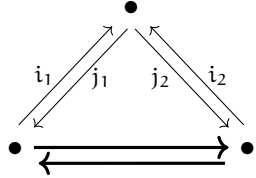
\begin{figure}[h!]
\begin{tikzpicture}
\matrix(m)[matrix of math nodes,
row sep=3em, column sep=2.5em,
text height=1.5ex, text depth=0.25ex]
{& \bullet\\ \bullet & & \bullet\\};
\path[->,font=\scriptsize]
(m-1-2) +(-0.05,-0.3) edge node[right] {$j_{1}$} (m-2-1)
(m-2-1) +(0.05,0.3) edge node[left] {$i_{1}$} (m-1-2)  
(m-1-2) + (0.05,-0.3) edge node[left] {$j_{2}$} (m-2-3)
(m-2-3) +(-0.05,0.3) edge node[right] {$i_{2}$} (m-1-2) +(0.05,0.3)
(m-2-1) edge  [very thick]   (m-2-3) 
(m-2-3)+(-0.3,-0.21) edge [very thick]  (-1.23,-1.15);
\end{tikzpicture}
\caption{Slice quiver with the maps corresponding to thick edges from left to right being $x_{1},\hdots,x_{\ell-1},y_{\ell},\hdots,y_{2\ell-2}$ and from right to left $y_{1},\hdots,y_{\ell-1},x_{\ell},\hdots,x_{2\ell-2}$.}
\label{Slice Quiver}
\end{figure}
\end{center}
\par
We consider the  point $p=(X_{\ell}=\left(\begin{array}{cc}
1 & 0 \\
0 & -1\end{array}\right),X_{\neq \ell}=0, Y_{k}=0,i=0,j=0)$. As the representation is semisimple, the $G$ orbit through $p$ in $T^*\overline{R}$ is closed and slightly abusing notation we will refer to the corresponding point in $\overline{\mathcal{M}}(2,\ell)$ as $p$ as well. The slice to the symplectic leaf at $p$ will be denoted by $\mathcal{SL}_{p}$. The description of slices as quiver varieties can be found in Section $2$ of \cite{BezrLos}.
In our case the slice  $\mathcal{SL}_{p}$ is the hypertoric variety obtained from the $(\mathbb{C}^*)^2$-action on $\mathbb{C}^{2\ell}$. In the basis $\langle x_{1}, x_{2},\hdots, x_{2\ell-2}, i_{1}, i_{2}\rangle$ the weights are  $(t_{1}^{-1}t_{2},\hdots,t_{1}^{-1}t_{2},t_{1}t_{2}^{-1},\hdots,t_{1}t_{2}^{-1}, t_{1}^{-1},t_{2}^{-1})$. It is the quiver variety for the underlying graph depicted on Figure \ref{Slice Quiver} with one-dimensional vector spaces assigned to the vertices and one-dimensional framing. We denote by $\rho_{s}$ the map $\mathcal{SL}_{p}^{\theta} \rightarrow \mathcal{SL}_{p}$ and fix $\theta=(-1,-1)$. The preimage of zero $\rho_{s}^{-1}(0)$  and the fixed points for the $T'\simeq (\mathbb{C}^{*})^{\ell-1}$-action on $\mathcal{SL}_{p}^{\theta}$ are described in the proposition below.
\par
\begin{prop}
$(a)$ $\rho_{s}^{-1}(0)\cong\mathbb{CP}^{2\ell-2}\cup~ \mathbb{CP}^{2\ell-2}$ consists of two irreducible components, intersecting in a single point $(x_{s}=y_{k}=i_{1}=i_{2}=0, j_{1}=j_{2}=1)$.
\par
$(b)$ There are $4\ell-3$ fixed points on $\mathcal{SL}_p^{\theta}$ for the $T'$-action. These points are (the $(\mathbb{C}^*)^2$- orbits of) $(x_{i}=1, x_{\neq i}=y_{j}=i_{1}=i_{2}=j_{2}=0, j_{1}=1)$, $(y_{j}=1, x_{i}=y_{\neq j}=i_{1}=i_{2}=j_{1}=0, j_{2}=1)$ and $(x_{s}=y_{k}=i_{1}=i_{2}=0, j_{1}=j_{2}=1)$.
\end{prop}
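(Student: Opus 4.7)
The plan is to exploit the hypertoric structure of $\mathcal{SL}_p^\theta$ and deduce both parts from the standard dictionary between a hypertoric variety and its polarised hyperplane arrangement. Write $K=(\mathbb{C}^{*})^2$ for the gauge torus acting with the specified weights on $\mathbb{C}^{2\ell}$, so that the residual torus $T^{\mathrm{res}}:=T^{2\ell}/K$ has dimension $2\ell-2$; the arrangement $\mathcal{H}$ lives in $\mathfrak{t}^{\mathrm{res}}_{\mathbb{R}}\simeq\mathbb{R}^{2\ell-2}$ and has one hyperplane per coordinate of $\mathbb{C}^{2\ell}$, with affine shifts determined by $\theta=(-1,-1)$.

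The first step is to compute $\mathcal{H}$. Choosing as a basis of $\mathbb{Z}^{2\ell-2}=\mathbb{Z}^{2\ell}/\mathbb{Z}^2$ the images $\bar e_1,\dots,\bar e_{\ell-1},\bar f_1,\dots,\bar f_{\ell-1}$ of the $2\ell-2$ basis vectors dual to $x_1,\dots,x_{2\ell-2}$, a direct calculation using the weight presentation gives the normals of the remaining two hyperplanes (coming from $i_1$ and $i_2$) as the antiparallel pair $\pm(\sum\bar f-\sum\bar e)$. Thus $\mathcal{H}$ consists of $2\ell-2$ ``coordinate'' hyperplanes together with exactly one pair of parallel hyperplanes.

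Part (b) is then a combinatorial count. By the standard identification, $T^{\mathrm{res}}$-fixed points on $\mathcal{SL}_p^\theta$ correspond to vertices of $\mathcal{H}$, i.e.\ to transverse intersections of $2\ell-2$ of the $2\ell$ hyperplanes; the only non-transverse $(2\ell-2)$-subsets are those containing both parallel hyperplanes. Hence the number of vertices is $\binom{2\ell}{2\ell-2}-\binom{2\ell-2}{2\ell-4}=\binom{2\ell}{2}-\binom{2\ell-2}{2}=4\ell-3$. Grouping vertices by which of $H_{i_1},H_{i_2}$ (if any) is used recovers the three families: using neither parallel hyperplane forces the vertex to be the intersection of all $2\ell-2$ coordinate hyperplanes and produces the single point $(x_s=y_k=i_1=i_2=0,\,j_1=j_2=1)$; using $H_{i_2}$ together with $2\ell-3$ coordinate hyperplanes (one of the $2\ell-2$ dropped) yields $2\ell-2$ Type-X points, each with a single $x_i=1$ and $j_1=1$, and symmetrically for $H_{i_1}$ and Type Y. Although $\dim T'=\ell-1<\dim T^{\mathrm{res}}$, the image of $T'$ in $T^{\mathrm{res}}$ is sufficiently generic to separate all of these isolated vertices, so the $T'$-fixed locus coincides with the $T^{\mathrm{res}}$-fixed locus.

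For part (a), the core $\rho_s^{-1}(0)$ is, by the standard result, the union of the compact toric subvarieties indexed by the bounded chambers of $\mathcal{H}$, with each such toric subvariety having fan given by the inward normals of the chamber's facets. The key combinatorial claim — and the main technical obstacle — is that for $\theta=(-1,-1)$ the arrangement $\mathcal{H}$ has exactly two bounded chambers, each a full-dimensional simplex with $2\ell-1$ facets: they are cut off from the two orthants on which $\sum\bar e-\sum\bar f$ has definite sign, being bounded by all $2\ell-2$ coordinate hyperplanes together with one of the two parallel hyperplanes, while every other orthant contains an escape ray along a single coordinate that the parallel pair fails to block. Verifying this reduces to a case-check on sign patterns combined with the explicit shifts coming from $\theta$. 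Granting this, the toric variety of a standard $(2\ell-2)$-simplex is $\mathbb{P}^{2\ell-2}$, giving $\rho_s^{-1}(0)\cong\mathbb{P}^{2\ell-2}\cup\mathbb{P}^{2\ell-2}$, and the two simplices share exactly the vertex indexed by the $2\ell-2$ coordinate hyperplanes, which translates in the toric dictionary to the single intersection point $(x_s=y_k=i_1=i_2=0,\,j_1=j_2=1)$ asserted in the statement.
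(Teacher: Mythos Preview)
Your approach is genuinely different from the paper's. The paper argues both parts by direct computation: for (a) it uses the Hilbert--Mumford criterion to force either all $x_k=0$ or all $y_s=0$, then reads off each $\mathbb{P}^{2\ell-2}$ from the residual $\mathbb{C}^*$-scaling; for (b) it writes down the system of equations~\eqref{eq:(sliceFixedPoints)} encoding the $T'$-fixed condition together with the moment equations~\eqref{eq:(slicemoment)} and the stability condition, and solves them case by case. Your route through the hypertoric dictionary is more conceptual and, if completed, would explain the answer rather than merely verify it.

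However, there is a genuine gap in your part (b). Your vertex count $\binom{2\ell}{2}-\binom{2\ell-2}{2}=4\ell-3$ is correct for the fixed points of the \emph{full} residual torus $T^{\mathrm{res}}$ of rank $2\ell-2$; but the proposition concerns the torus $T'\simeq(\mathbb{C}^*)^{\ell-1}$ inherited from the ambient quiver variety, which has strictly smaller rank. The inclusion $(\mathcal{SL}_p^\theta)^{T^{\mathrm{res}}}\subset(\mathcal{SL}_p^\theta)^{T'}$ goes the wrong way for your purposes, and your sentence ``the image of $T'$ in $T^{\mathrm{res}}$ is sufficiently generic to separate all of these isolated vertices'' is not an argument---it is exactly the thing that needs proving. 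Concretely, $T'$ acts on the slice coordinates with weights $(t'_1,\dots,t'_{\ell-1},t'^{-1}_1,\dots,t'^{-1}_{\ell-1},1,1)$ (trivially on $i_1,i_2$), so its image in $T^{\mathrm{res}}$ is a specific $(\ell-1)$-dimensional subtorus, not a generic one; you must check that no positive-dimensional $T^{\mathrm{res}}$-orbit in $\mathcal{SL}_p^\theta$ is pointwise fixed by this particular subtorus. The paper sidesteps this entirely by computing directly with $T'$. You could close the gap either by that direct check or by showing that every edge of the arrangement (one-dimensional stratum) carries a $T'$-weight that is nonzero modulo $K$, but as written the step is missing.

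A smaller issue: in (a) you correctly identify that the crux is the claim that the arrangement has exactly two bounded chambers, each a $(2\ell-2)$-simplex, but you then write ``Granting this'' rather than carrying it out. Given the explicit normals (Table~4 in the paper), this is a routine but non-empty case analysis, and your sketch of why unbounded orthants escape along a coordinate ray is plausible but not complete. The paper's Hilbert--Mumford argument, by contrast, is short and self-contained.
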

\begin{proof}
To see that $(a)$ is true, we first notice that for $(\mathbf{x},\mathbf{y},i,j)\in \rho_s^{-1}(0)$ we have either all $x_{k}=0$ or all $y_{s}=0$ (use the Hilbert-Mumford criterion in a similar way to the proof of Lemma $2.2$). In the former case the stability condition guarantees $j_{2}\neq0$ and $j_{1}$ or at least one of $y_{s}$'s is nonzero. Therefore, the first equation in \eqref{eq:(slicemoment)} below immediately implies that $i_{2}=0$. To see that $i_{1}=0$ as well, notice that the one-dimensional torus, acting on the vector space assigned to the left vertex, acts on $i_{1}$ and $y_{s}$ with $j_{1}$ with opposite weights.  We look at the space $\mathbb{C}^{2\ell-1}\setminus \{0\}$, formed by $y_{s}$'s and $j_{1}$. The $\mathbb{C}^{*}$-action on the one-dimensional framing attached to the right vertex allows to assume $j_{2}=1$. Observing that the action of the remaining $\mathbb{C}^{*}$ simultaneously rescales the vectors in $\mathbb{C}^{2\ell-1}\setminus \{0\}$, we recover the first $\mathbb{CP}^{2\ell-2}$ component in $\rho_s^{-1}(0)$. Similarly, if all $y_{s}=0$, one comes up with $\mathbb{CP}^{2\ell-2}$ with coordinates $x_{k}$ and $j_{2}$. It remains to notice that the projective spaces have exactly one point of intersection, $(x_{s}=y_{k}=i_{1}=i_{2}=0, j_{1}=j_{2}=1)$.
\par
Next we verify the assertion of $(b)$. The moment map equations considered separately for the two vertices are equivalent to
\begin{equation}
 \label{eq:(slicemoment)}
\begin{cases} \sum\limits_{i=1}^{\ell-1} (x_{i}y_{\ell+i} + x_{\ell+i}y_{i})+j_{1}i_{1}=0 \\ j_{1}i_{1} = j_{2}i_{2}.  \end{cases}
 \end{equation}
\par
Recall that $\theta = (-1,-1)$. Then the $\theta$-semistable locus consists of all representations for which at least one of $j_{1}, j_{2}$ is not equal to zero and
\begin{itemize}
	\item if $j_{1}\neq 0$ and $j_2=0$ there exists an  $i$ such that  $x_{i}\neq 0$;
	\item if $j_{2}\neq 0$ and $j_1=0$ there exists a  $j$ such that  $y_{j}\neq 0$.
\end{itemize}
  The formulas for the torus action below are derived from the fact that $x_{i}\in\mbox{Hom}(r_{1},r_{2})$ and $y_{i}\in\mbox{Hom}(r_{2},r_{1})$ are the elements above and below diagonal in the $i$th matrix of our quiver variety, where $r=r_{0}\oplus r_{1}\oplus r_{2}$ is the decomposition of the representation into simples.
\begin{equation}
\label{eq:(sliceFixedPoints)}
\begin{cases} t'_{1}x_{1}=t_1 x_{1}t_2^{-1} \\ \hdots  \\ t'^{-1}_{\ell-1}x_{2\ell-1}=t_1 x_{2\ell-1}t_2^{-1} 
\\t'_{1}y_{1}=t_1^{-1} y_{1}t_2 \\ \hdots \\ t'^{-1}_{\ell-1}y_{2\ell-1}=t_1^{-1} y_{2\ell-1}t_2
\\ j_{1}=t_1^{-1}j_{1} \\  j_{2}=t_2^{-1}j_{2} \\ i_{1}=t_1 i_{1} \\  i_{2}=t_2 i_{2}, 
 \end{cases}
 \end{equation}
\par
here $(t'_1,\hdots,t'_{\ell-1})\in T'$ and $(t_1,t_2)\in (\mathbb{C}^*)^2$. We first notice that it is not possible for both $i_{s}$ and $j_{s}$ to be nonzero ($s\in\{1,2\}$), as otherwise the second equation of  \eqref{eq:(slicemoment)} would imply all $i_{s}$, $j_{s}$ ($s\in\{1,2\}$) were nonzero  and consequently $t_1=t_2=1$, implying all $x_{k}=y_{c}=0$, hence, contradicting  the first equation of  \eqref{eq:(slicemoment)}. It follows from $(a)$ that $i_{1}=i_{2}=0$. From the system of equalities \eqref{eq:(sliceFixedPoints)} it also follows that we must have one of the following
\begin{itemize}
	\item $x_{i}\neq 0, y_{l+i}\neq 0$ and  $y_{i}\neq 0$ with $i \in \{2,\hdots,\ell\}$;
	\item  $x_{l+i}\neq 0$ and $y_{i}\neq 0$ with $i \in \{2,\hdots,\ell\}$;
	\item all $x_{i}$ and all $y_{j}$ are zero with $j_{1}=j_{2}=1$ and $i_{1}=i_{2}=0$.
\end{itemize}

In each of the former two cases  \eqref{eq:(slicemoment)} reduces to either $x_{i}y_{\ell+i}= 0$ or $x_{\ell+i}y_{i} = 0$, then the claim of the proposition easily follows from the description of semistable points.  

\end{proof}
 \begin{rmk}
	\label{SmTFP}
	 The slice $\mathcal{SL}_{p}\subset\overline{\mathcal{M}}^{\theta}(2,\ell)$ is a formal subscheme (formal neighborhood of the point $p$). We describe the intersection of the fixed point loci $\mathcal{SL}^{T'}_{p}\cap\overline{\mathcal{M}}^{\theta}(2,\ell)^{T'}$ (the latter was found in Remark \ref{SmTFPpp}). Each fixed point $(x_{s}=1, x_{\neq s}=y_{j}=i_{1}=i_{2}=j_{2}=0, j_{1}=1)$ on the slice  with $s \in \{1,\hdots,\ell-1\}$ is the fixed point $(X_{\ell}=\left(\begin{array}{cc}
	1 & 0 \\
	0 & -1\end{array}\right), Y_{\ell}=0,$ $ X_{s} = \left(\begin{array}{cc}
	0 & 1 \\
	0 & 0\end{array}\right), X_{\neq s}=Y_k=0, i=0, j=\left(\begin{array}{c}
	1 \\
	0\end{array}\right))$  on $\overline{\mathcal{M}}^{\theta}(2,\ell)^{T'}$; $(y_{s}=1, x_{i}=y_{\neq s}=i_{1}=i_{2}=j_{1}=0, j_{2}=1)$ is $(X_{\ell}=\left(\begin{array}{cc}
	-1 & 0 \\
	0 & 1\end{array}\right), Y_{\ell}=0,$ $ X_{s} = \left(\begin{array}{cc}
	0 & 1 \\
	0 & 0\end{array}\right), X_{\neq s}=Y_k=0, i=0, j=\left(\begin{array}{c}
	1 \\
	0\end{array}\right))$. Notice that these points are respectively the points $(1,0)$ and $(-1,0)$ on $\mathbb{C}^2_s\subset\overline{\mathcal{M}}^{\theta}(2,\ell)^{T'}$ (see Remark \ref{SmTFPpp}). In case $s \in \{\ell+1,\hdots,2\ell\}$ the fixed points on the slice are $(X_{\ell}=\left(\begin{array}{cc}
	1 & 0 \\
	0 & -1\end{array}\right), Y_{\ell}=0,$ $Y_{s}=\left(\begin{array}{cc}
	0 & 1 \\
	0 & 0\end{array}\right),  X_{\neq s}=Y_k=0, i=0, j=\left(\begin{array}{c}
	1 \\
	0\end{array}\right))$ and  $(X_{\ell}=\left(\begin{array}{cc}
	-1 & 0 \\
	0 & 1\end{array}\right), Y_{\ell}=0,$ $ Y_{s} = \left(\begin{array}{cc}
	0 & 1 \\
	0 & 0\end{array}\right),  X_{\neq s}=Y_k=0, i=0, j=\left(\begin{array}{c}
	1 \\
	0\end{array}\right))$, finally,
	$(x_{s}=y_{k}=i_{1}=i_{2}=0, j_{1}=j_{2}=1)$ becomes $(X_{\ell}=\left(\begin{array}{cc}
	1 & 0 \\
	0 & -1\end{array}\right),X_{\neq \ell}=Y_{k}=0, i=0, j=\left(\begin{array}{c}
	1 \\
	1\end{array}\right)) \in T^{*}\mathbb{P}^{1}$.
\end{rmk}


\section{Category $\mathcal{O}_{\xi}(\mathcal{\overline{S}}_{\lambda}(2,\ell))$ for the slice $\mathcal{SL}_{p}$}

The main goal of this section is to provide a description of the category $\mathcal{O}_{\nu}(\mathcal{\overline{SL}}_{\lambda}(2,\ell))$ for the slice $\mathcal{SL}_{p}$. These results will be used in the next section for the study of the category $\mathcal{O}_{\nu}(\mathcal{\overline{A}}_{\lambda}(2,\ell))$. As $\mathcal{SL}_{p}$ is a hypertoric variety, we use the results of \cite{BLPW2} and \cite{BLPW1}, where analogous categories were explicitly described in a more general setting.

We start by briefly recalling the basic definitions, notions and results (for a more detailed exposition see  \cite{BLPW2} and \cite{BLPW1}).
\subsection{Hypertoric varieties (a brief overview)}
Consider the moment map for the action of the torus $K\subset \tilde{T}=(\mathbb{C}^*)^n$ on the variety $T^*\mathbb{C}^n$, i.e. $$\mu: T^*\mathbb{C}^n \rightarrow \mathfrak{k}^*.$$ 
\par
Fix a direct summand $\Lambda_0 \subset W_\mathbb{Z}$, let $W_{\mathbb{R}}:=W_{\mathbb{Z}}\otimes_{\mathbb{Z}}\mathbb{R}, V_{0,\mathbb{R}}=\mathbb{R}\Lambda_0, V_0:=\mathbb{C}\Lambda_0\subset W\cong t^*$, $\mathfrak{k}=V_0^{\perp}$ and $K \subset \widetilde{T}$ be the connected subtorus with Lie algebra $\mathfrak{k}$. Thus $\Lambda_0$ may be identified with the character lattice of $\widetilde{T}/K$ and $W_\mathbb{Z}/\Lambda_0$ may be identified with the character lattice of $K$. 
\begin{defn}
	The \textit{hypertoric variety} associated to the triple $X = (\Lambda_0, \eta, \xi)$ with $\eta$ a $\Lambda_0$-orbit in $W_{\mathbb{Z}}$ is $\mathfrak{M}(X):=\mu^{-1}(0)^{\eta-ss}//K$. Also define $\mathfrak{M}_0(X):=\mu^{-1}(0)//K$. We consider the categorical quotient in both cases. The projective map $\mathfrak{M}(X)\rightarrow \mathfrak{M}_0(X)$ will be denoted by $\kappa$. We will denote the subspace $\eta+V_{0,\mathbb{R}}\subset W_{\mathbb{R}}$ by $V_{\eta}$. The triple $X = (\Lambda_0, \eta, \xi)$ is called a \textit{polarized arrangement}.
\end{defn}

For a sign vector $\alpha\in\{+,-\}^{n}$ define the chamber $P_{\alpha,0}$  to be the subset of the affine space $V_{\nu}:=\{v+\nu~|~v\in V_{0,\mathbb{R}}\}$ cut out by the inequalities
$$h_i \geq 0 \mbox{ for all } i \in I_{\mathbf{\Lambda}} \mbox{ with } \alpha(i) = + \mbox{ and } h_i \leq 0 \mbox{ for all } i \in I_{\mathbf{\Lambda}} \mbox{ with } \alpha(i) = -.$$ If $P_{\alpha,0}\neq \varnothing$ we say that $\alpha$ is \textit{feasible} for $X$ and let $\mathcal{F}_{\eta}$ be the set of feasible sign vectors.

  \begin{rmk}
	The hypertoric variety $\mathfrak{M}_0(X)$ is affine, and for any central character $\lambda$ of the
	hypertoric enveloping algebra $U$ there is a natural isomorphism
	$\mbox{gr}  U_\lambda \simeq \mathbb{C}[\mathfrak{M}_0] \simeq  \mathbb{C}[\mathfrak{M}] $ (Proposition $5.2$ in \cite{BLPW1}).
\end{rmk}
	\par
	Let $\mathbb{S} := \mathbb{C}^*$ act on $T^*\mathbb{C}^n$ by inverse scalar multiplication i.e. $s \cdot (z, w) := (s^{-1}z, s^{-1}w)$. This induces an $\mathbb{S}$-action on both $\mathfrak{M}(X)$ and $\mathfrak{M}_0(X)$, and the map $\kappa$ is $\mathbb{S}$-equivariant. We have that $\kappa:\mathfrak{M}(X)\rightarrow \mathfrak{M}_0(X)$ is a conical symplecic resolution. The symplectic form $\omega$ has weight $2$ w.r.t. the aforemented $\mathbb{S}$-action.
\subsection{Hypertoric category $\mathcal{O}$}
\par
Let $\mathbb{D}$ be the Weyl algebra of polynomial differential operators on $\mathbb{C}^n$, i.e. $$\mathbb{D} =\mathbb{C}\langle x_1,\partial_1,\hdots x_n,\partial_n\rangle,$$ with $[x_i,x_j]=[\partial_i,\partial_j]=0$ and $[\partial_i,x_j]=\delta_{ij}$. The action of the torus $\widetilde{T}=(\mathbb{C}^*)^n$ on $\mathbb{C}^n$ induces an action on $\mathbb{D}$. This provides the $\mathbb{Z}^n$- grading  $$\mathbb{D}=\underset{z\in W_\mathbb{Z}}{\bigoplus}\mathbb{D}_z,$$  where $W_\mathbb{Z}$ is the character lattice of $\widetilde{T}$, deg$(x_i)=-\mbox{deg}(\partial_i)=(0,\hdots,0,\underset{i}{1},0,\hdots,0)$ and $\mathbb{D}_z:=\{a\in \mathbb{D}~|~ t\cdot a=t_1^{z_1}\hdots t_n^{z_n}a ~\forall~ t\in \widetilde{T}\}$.
\par
Observe that the $0^{\mbox{th}}$ graded piece is $\mathbb{D}^{\widetilde{T}}=\mathbb{C}[x_1\partial_1,\hdots, x_n\partial_n]$ and define $h_i^-:=\partial_ix_i$ and   $h_i^+:=x_i\partial_i$ with $h_i^--h_i^+=1$. We consider the Bernstein filtration on  $\mathbb{D}$ (here deg$(x_i)=$ deg$(\partial_j)=1$)  and let $H:=\mbox{gr}(\mathbb{D}_0)=\mathbb{C}[h_1,\hdots,h_n]$, where $h_i:=h_i^++F_0(\mathbb{D}_0)=h_i^-+F_0(\mathbb{D}_0)$.


\begin{defn}
\textit{The hypertoric enveloping algebra associated to $\Lambda_0$} is the ring of $K$-invariants
$U := \mathbb{D}^K =\underset{z\in \Lambda_0}{\bigoplus}\mathbb{D}_z.$
\end{defn}
Consider a module $M\in U\operatorname{-mod}$. For a point $v\in W$, let $\mathcal{J}_v$ denote the corresponding maximal ideal. Then the generalized $v$-weight space of $M$ is defined as $$M_v:=\{m\in M~|~\mathcal{J}^k_v m=0 \mbox{ for } k\gg 0 \}.$$

The support of $M$ is defined by $$\mbox{Supp }M:=\{v \in W~|~M_v\neq  0 \}.$$
We will use the notation $U\operatorname{-mod}_{\Lambda}$ for $M \in U\operatorname{-mod}$ with $\mbox{Supp }M\subset \Lambda$.
\par
Let $Z(U)$ denote  the center of $U$. It is not hard to show that $Z(U)$  is the subalgebra isomorphic to the image of $S[\mathfrak{k}]$ under the quantum comoment map (Section $3.2$ of \cite{BLPW1}). Let $\lambda: Z(U)\rightarrow \mathbb{C}$ be a central character. Notice that the isomorphism $Z(U)\simeq S[\mathfrak{k}]$ allows to think of $\lambda$  as an element of $\mathfrak{k}^*$. We will denote by $U_\lambda:=U/\langle ker(\lambda)\rangle U$ the corresponding central quotient. Set $V_{\lambda} := \lambda + V_0 = \lambda + \mathbb{C}\Lambda_0, V_{\lambda,\mathbb{R}} :=  \lambda + \mathbb{R}\Lambda_0$ and let $\mathbf{\Lambda}$ be a $\Lambda_0$-orbit.
\par
Choose a generic element $\xi \in \Lambda_0^*\simeq (t/\mathfrak{k})^*$, the action of $\xi$ lifts to $U$ and produces a grading given by $$U:=\underset{\xi(z)=k}{\bigoplus}U_z.$$ 
\par
Set $$U^+:=\underset{k\geq0}{\bigoplus}U^k \mbox{ and } U^-:=\underset{k\leq0}{\bigoplus}U^k,$$
similarly, $U^+_\lambda$ and $U^-_\lambda$ are the images of $U^+$ and $U^-$ under the quotient map $U \rightarrow U_\lambda$.
\begin{defn}
The \textit{ hypertoric category} $\mathcal{O}$ is the full subcategory of $U$-mod consisting
of modules that are $U^+$ - locally finite and semisimple over the center $Z(U)$. Define $\mathcal{O}_{\lambda}$
to be the full subcategory of $\mathcal{O}$ consisting of modules on which $U$ acts with central character $\lambda$. Equivalently, it is as the full subcategory of $U_\lambda$-mod consisting of modules that are $U^+_\lambda$ - locally finite. Finally, define $\mathcal{O}(\Lambda_0, \Lambda, \xi)$ to be the full subcategory of $\mathcal{O}_{\lambda}$ consisting of modules supported in $\Lambda$; equivalently, the full subcategory of $U_\lambda\operatorname{-mod}_{\Lambda}$ consisting of modules that are $U^+_\lambda$ - locally finite. The triple $\mathbf{X}:=(\Lambda_0, \Lambda, \xi)$ is called a \textit{quantized polarized arrangement}.
\end{defn}
\par
Similarly to category $\mathcal{O}$ of a semisimple Lie algebra, we have the direct sum decompositions 
\begin{equation}
\label{BlockDecomp}
\begin{aligned}
\mathcal{O} = \underset{\Lambda \in W/\Lambda_0}{\bigoplus}
\mathcal{O}(\Lambda_0, \Lambda, \xi) \mbox{ and }\\
\mathcal{O}_{\lambda} = \underset{\Lambda' \in V_{\lambda}/\Lambda_0}{\bigoplus}
\mathcal{O}(\Lambda_0, \Lambda', \xi).
\end{aligned}
\end{equation}
The summands in the decompositions above are blocks, i.e. they are the smallest possible direct summands (see Section $4.1$ of \cite{BLPW1} for details).
\par
Let $I_{\mathbf{\Lambda}}$ be the set of indices $i \in \{1,\hdots, n\}$ for which $h^+_i (\Lambda) \subset \mathbb{Z}$ (or equivalently $h^-_i (\Lambda) \subset \mathbb{Z}$).
For a sign vector $\alpha\in\{+,-\}^{n}$ define the chamber $P_\alpha$  to be the subset of the affine space $V_{\lambda}:=\{v+\lambda~|~v\in V\}$ cut out by the inequalities
$$h^+_i \geq 0 \mbox{ for all } i \in I_{\mathbf{\Lambda}} \mbox{ with } \alpha(i) = + \mbox{ and } h^-_i \leq 0 \mbox{ for all } i \in I_{\mathbf{\Lambda}} \mbox{ with } \alpha(i) = -.$$
If $P_\alpha\cap \Lambda$ is nonempty, we say that $\alpha$ is \textit{feasible} for $\Lambda$. We call $\alpha$ \textit{bounded for $\xi$} if the restriction of $\xi$ is proper and bounded above on $P_\alpha$. The set of feasible sign vectors will be denoted by $\mathcal{F}_{\Lambda}$, the set of bounded vectors by $ \mathcal{B}_{\xi}$ and the set of bounded feasible vectors by $\mathcal{P}_{\Lambda,\xi}:=\mathcal{F}_{\Lambda}\cap\mathcal{B}_{\xi}$.
\begin{ex}
	In case $\ell=2$, the slice  $\mathcal{SL}_{p}$ is the hypertoric variety obtained from the $K=(\mathbb{C}^*)^2$-action on $\mathbb{C}^4$ via $$t\cdot (x_{1}, x_{2}, i_{1}, i_{2})=(t_{1}^{-1}t_{2}x_1,t_{1}t_{2}^{-1}x_2,t_{1}^{-1}i_1,t_{2}^{-1}i_2).$$ Notice that $\mathfrak{k}\hookrightarrow Lie(\widetilde{T})$ and the image is span$((-1,1,-1,0),(1,-1,0,-1))$, set $L:=\mbox{span}_{\mathbb{R}}((-1,1,-1,0),(1,-1,0,-1))$. Then $V_{0,\mathbb{R}}=\mbox{span}_{\mathbb{R}}((1,1,0,0),(0,1,1,-1))$ (the subspace of $W_{\mathbb{R}}$ orthogonal to $L$) and we consider  $\Lambda_0=V_{0,\mathbb{R}}\cap W_{\mathbb{Z}}$ and the central character $\lambda: S[\mathfrak{k}]\rightarrow \mathbb{C}$ defined by $\lambda(t_1,t_2)=(\tilde{\lambda},\tilde{\lambda})$ for $\tilde{\lambda} \in \mathbb{C}$.  We take $\eta=(-1,-1)$ to be the restriction of the character $\theta$ of $G$.


	Then $V_{\lambda}$ is cut out in $W$ (or $V_{\lambda,\mathbb{R}}$ inside $W_{\mathbb{R}}$) by the following equations:
	
	$$\begin{cases} -x_{1}+x_{2}-i_{1}=\tilde{\lambda} \\ \quad x_{1}-x_{2}-i_{2}=\tilde{\lambda} \end{cases},$$
	equivalently,
	$$\begin{cases} \; i_{1}+i_{2}=-2\tilde{\lambda} \\ x_{1}-x_{2}=i_{2}+\tilde{\lambda} \end{cases}.$$
	This is a $2$-dimensional affine subspace of $W$. We identify $V_{\lambda}$ with $\mathbb{C}^2$ (or $V_{\lambda,\mathbb{R}}$ with $\mathbb{R}^2$) by choosing the origin of $V_{\lambda}$ to be the point $(0,0,-\tilde{\lambda},-\tilde{\lambda})$ and the basis $u_1:=(1,1,0,0), u_2:=(0,1,1,-1)$. Next we pick a one-parameter subgroup $\xi=(2,1)$. In case $\tilde{\lambda} \in \mathbb{Z}$, we have  $\mathcal{P}_{\lambda,\xi}=\{+---,-+--,----,---+,--+-\}$ (see Figure \ref{ChambersL2}).
	\begin{center}
		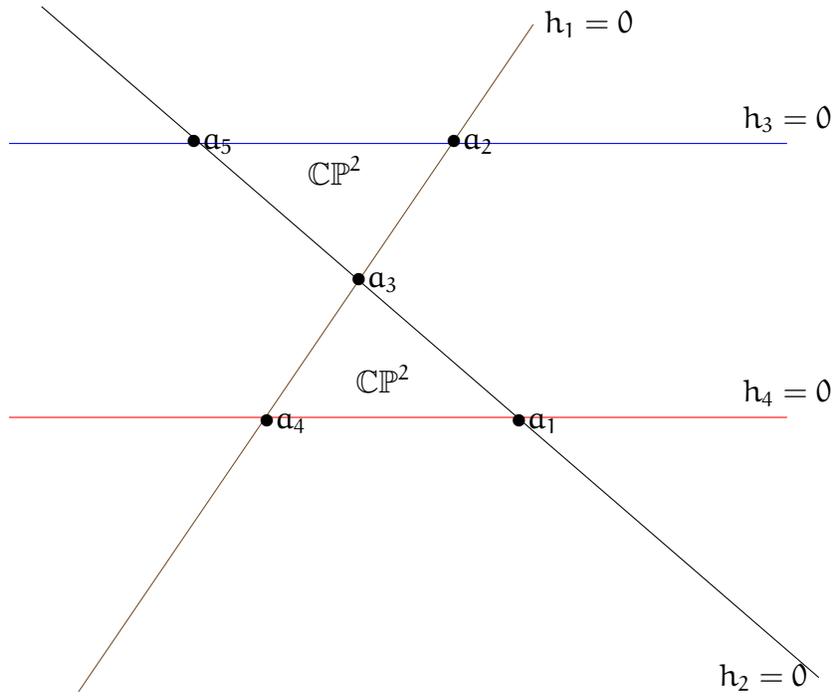
\begin{figure}
			\begin{tikzpicture}
			\begin{axis}[scale=1.6,axis lines=none, ticks=none,xmin=-11, xmax=15, ymin=-10,ymax=15,
			samples =2,no marks]
			\addplot+[domain=-13:13.5] {10} node[above,text=black] {$h_3=0$};
			\addplot+[domain=-13:13.5] {0} node[above,text=black] {$h_4=0$};
			\addplot+[domain=-13:5.5] {1.7*x+5} node[right,text=black] {$h_1=0$};
			\addplot+[domain=-10:14.5] {5-x} node[left,text=black] {$h_2=0$} ;  
			\end{axis}

			\node at (1.67*2.17,1.55*2.3){$\bullet a_4$};
			\node at (1.6*3.03,1.6*3.4){$\bullet a_3$};
			\node at (1.6*3.82,1.6*4.55){$\bullet a_2$};
			\node at (1.66*1.6,1.6*4.55){$\bullet a_5$};
			\node at (1.6*4.36,1.55*2.3){$\bullet a_1$};

			\node at (1.6*3.1,1.6*2.6){$\mathbb{CP}^2$};
			\node at (1.6*2.7,1.6*4.33){$\mathbb{CP}^2$};
			\end{tikzpicture}
			\caption{Polarized arrangement for $\ell=2$}
			\label{Polarized arrangement}
		\end{figure}
	\end{center}
\end{ex}
\begin{center}
	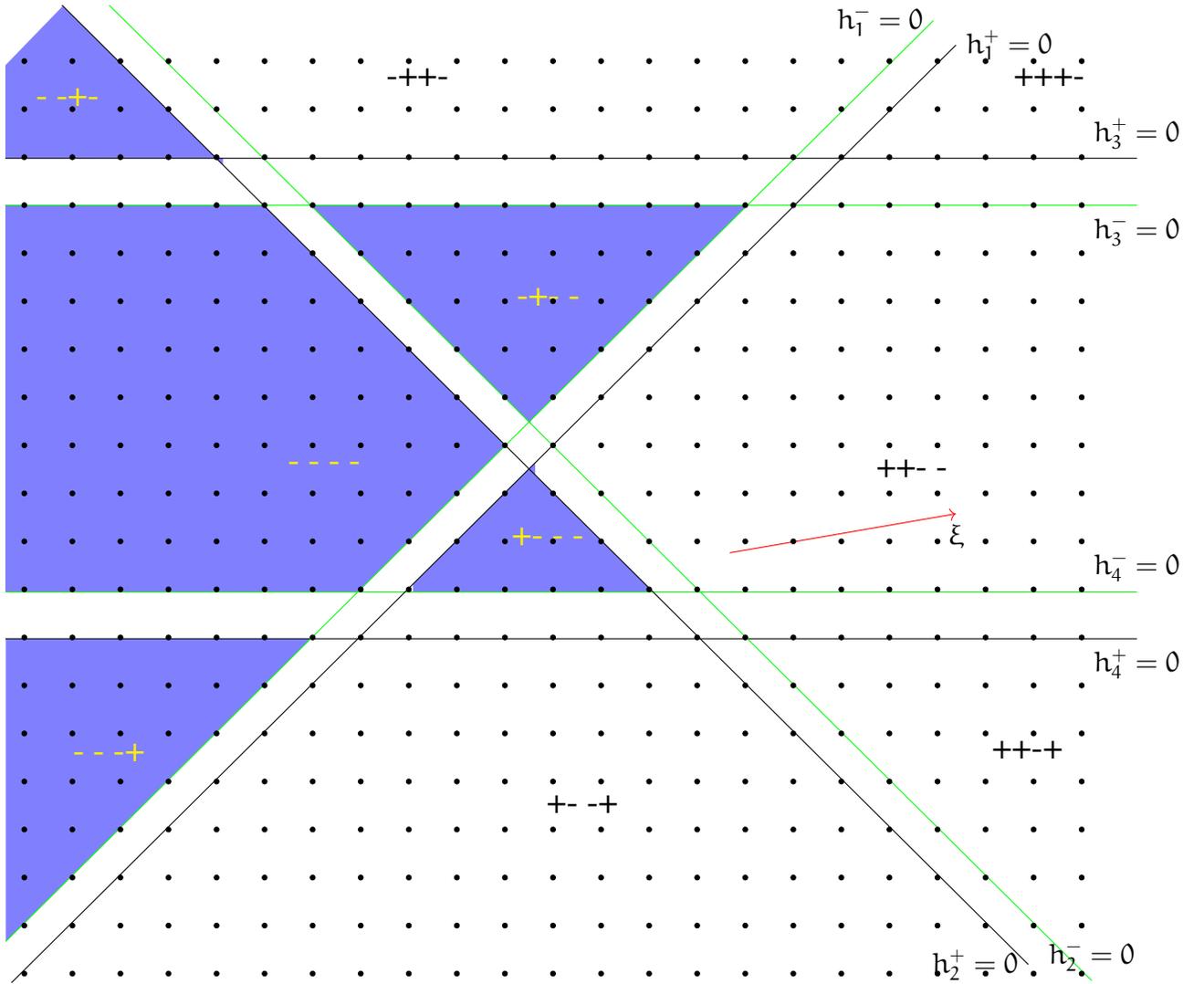
\begin{figure}
		\label{ChambersL2}
		\begin{tikzpicture}
		\begin{axis}[scale=2.5,axis lines=none,ticks=none,xmin=-11, xmax=15, ymin=-10,ymax=15,
		, samples =2,,no marks]
		\addplot[name path=C1,domain=-13:14,green] {9.9} node[below,text=black] {$h^-_3=0$};
		\addplot[name path=C2,domain=-13:14,black] {11.1} node[above,text=black] {$h^+_3=0$};
		\addplot[name path=D2,domain=-13:14,black] {-1.2} node[below,text=black] {$h^+_4=0$};
		\addplot[name path=D1,domain=-13:14,green] {0} node[above,text=black] {$h^-_4=0$};
		\addplot[name path=B2,domain=-10.5:11.6,black] {3.8-1.15*x}node[left,text=black] {$h^+_2=0$};
		\addplot[name path=B1,domain=-10:13,green]{5-1.15*x}node[above,text=black] {$h^-_2=0$};
		\addplot[name path=A1,domain=-11:9.5,green] {3.7+1.15*x}node[left,text=black] {$h^-_1=0$};
		\addplot[name path=A2,domain=-11:10,black] {2.5+1.15*x}node[right,text=black] {$h^+_1=0$};
		\addplot[->,domain=5:10,red] {0.2*x} node[below,text=black] {$\xi$};

		\addplot[blue!50]fill between[of=D1 and C1, soft clip={domain=-11:-5.25}];
		\addplot[blue!50]fill between[of=A1 and D2, soft clip={domain=-11:-4.25}];
		\addplot[blue!50]fill between[of=D1 and B2, soft clip={domain=-5.3:-3.14}];       \addplot[blue!50]fill between[of=A1 and B2, soft clip={domain=-3.15:0}];
		\addplot[blue!50]fill between[of=B1 and C1, soft clip={domain=-4.3:0.6}];
		\addplot[blue!50]fill between[of=A1 and C1, soft clip={domain=0.56:5.4}];
		\addplot[blue!50]fill between[of=B2 and C2, soft clip={domain=-115:-6.2}];
		\addplot[blue!50]fill between[of=A2 and D1, soft clip={domain=-2:0.7}];
		\addplot[blue!50]fill between[of=B2 and D1, soft clip={domain=0.69:3.2}];
		\end{axis}
		\node at (1.5,1.6*2.1){\large{{\color{yellow}{- - -+}}}};
		\node at (1.6*2.9,1.6*4.7){\large{{\color{yellow}{- - - -}}}};;
		\node at (2.4*3.5,2.6){\large{+- -+}};
		\node at (2.4*6.2,3.4){\large{++-+}};
		
		\node at (2.43*3.25,2.5*2.6){\large{{\color{yellow}{+- - -}}}};
		\node at (4*3.3,3*2.5){\large{++- -}};
		\node at (4*3.8,3*4.4){\large{+++-}};
		\node at (6,3*4.4){\large{-++-}};
		\node at (0.9,3*4.3){\large{{\color{yellow}{- -+-}}}};
		\node at (3.2*2.47,2.5*4){\large{{\color{yellow}{-+- -}}}};
		
		\foreach \x in {0,1,...,22}{
			\foreach \y in {0,1,...,19}{
				\node[draw,circle,inner sep=.7pt,fill] at (.7*\x+0.27,.7*\y+0.13) {};
			}
		}
		
		\end{tikzpicture}
		\caption{Chambers and sign vectors for  $\ell=2$}
		\label{QPolarized arrangement}
	\end{figure}
\end{center}

\begin{rmk}
    If $\alpha \in \mathcal{P}_{\Lambda,\xi}$ and  $\xi$ is a generic character then the differential of $\xi$ attains its maximal value at a single point of $P_\alpha$. This point will be denoted by $a_{\alpha}$. It is the intersection of $\mbox{dim}(V_{\lambda})$ hyperplanes from
    $$h^+_i = 0 \mbox{ for all } i \in I_{\mathbf{\Lambda}} \mbox{ with } \alpha(i) = + \mbox{ and } h^-_i = 0 \mbox{ for all } i \in I_{\mathbf{\Lambda}} \mbox{ with } \alpha(i) = -.$$
     Let $C_\alpha$ be the unique polyhedral cone cut out in $V_{\lambda}$  by $\mbox{dim }V_{\lambda}$ inequalities 
     $$h^+_i \geq 0 \mbox{ for all } i \in I_{\mathbf{\Lambda}}, h^+_i(a_{\alpha})=0  \mbox{ with } \alpha(i) = + \mbox{ and }$$ $$ h^-_i \leq 0 \mbox{ for all } i \in I_{\mathbf{\Lambda}}, h^-_i(a_{\alpha})=0 \mbox{ with } \alpha(i) = -.$$  Notice that $P_\alpha\subset C_\alpha$ and the differential of $\xi$ is negative on the extremal rays of $C_\alpha$.
\end{rmk}



Next we describe the standard objects of $\mathcal{O}(\mathbf{X})$.
For any sign vector $\alpha\in\{+,-\}^{I_{\mathbf{\Lambda}}}$, consider the $\mathbb{D}$-module $$\triangle_\alpha := \mathbb{D}/I_{\alpha},$$
where $I_{\alpha}$ is the left ideal generated by the elements
\begin{flalign*}
&\bullet \partial_i, h_i^+(a_{\alpha})=0,\\
&\bullet x_i, h_i^-(a_{\alpha})=0,\\
&\bullet h^+_i -h^+_i (a_{\alpha}), i\not\in I_{\mathbf{\Lambda}}. 
\end{flalign*}
Define $\triangle_\alpha^\Lambda:=\underset{v\in \mathbb{\Lambda}}{\bigoplus} (\triangle_\alpha )_v$, then the standard objects of $\mathcal{O}(\Lambda_0, \Lambda, \xi)$ are $\triangle_\alpha^\Lambda$ for $\alpha \in \mathcal{P}_{\Lambda,\xi}$   (see Section $4.4$ in \cite{BLPW1}). Let  $S_\alpha^\Lambda$ denote the unique simple quotient of $\triangle_\alpha^\Lambda$.
\par
We will need one more definition.
 \begin{defn}
 \label{LinkedArrangements}
	The quantized polarized arrangement $\mathbf{X}=(\Lambda_0,\Lambda,\xi)$ and polarized arrangement $X=(\Lambda_0,\eta,\xi)$ are said to be \textit{linked} if $\pi(\mathcal{F}_{\Lambda})=\mathcal{F}_{\eta}$ for the projection $\pi: \{1,\hdots,n\}\rightarrow I_{\Lambda}$.
\end{defn}

\begin{rmk}
   	The hypertoric category $\mathcal{O}_{\lambda}$ is a category $\mathcal{O}_{\xi}$ for $U_\lambda$ in the sense of Definition \ref{CatO}.
\end{rmk} 
\begin{rmk}
	\label{highestWeight}
	If $\mathbf{X}=(\Lambda_0, \Lambda, \xi) $ is regular, then the category $\mathcal{O}(\mathbf{X}) $ is highest weight and Koszul (see Definition $2.10$ and Corollary $4.10$ in \cite{BLPW1}).
\end{rmk}

\subsection{Hypertoric category $\mathcal{O}$ for the slice $\mathcal{SL}_{p}$}
The slice  $\mathcal{SL}_{p}$ is the hypertoric variety obtained from the $K=(\mathbb{C}^*)^2$-action on $\mathbb{C}^{2\ell}$ via 
$$t\cdot (x_{1}, \hdots, x_{\ell-1}, x_{\ell}, \hdots, x_{2\ell-2}, i_{1}, i_{2})=(t_{1}^{-1}t_{2}x_1,\hdots, t_{1}^{-1}t_{2}x_{\ell-1}, t_{1}t_{2}^{-1}x_{\ell},\hdots,t_{1}t_{2}^{-1}x_{2\ell-2}t_{1}^{-1}i_1,t_{2}^{-1}i_2),$$ it can be also viewed as a quiver variety (see Figure \ref{Slice Quiver}). This is the toric variety $\mathfrak{M}(X)$ for the polarized arrangement $X=(\Lambda_0, \eta,\xi)$. Let $\Lambda_0=V_{0,\mathbb{R}}\cap W_{\mathbb{Z}}$ for $V_{0,\mathbb{R}}=\mbox{span}_{\mathbb{R}}(u_1,\hdots,u_{2\ell-2})$ (where the vectors $u_1,\hdots,u_{2\ell-2}$ are defined below) and the same character $\lambda$ and same $\eta$ as for $\ell=2$ above, then $V_{\lambda}$ is the affine subset of $W$ given by

$$\begin{cases} -\sum\limits_{k=1}^{\ell-1}x_{k}+\sum\limits_{k=1}^{\ell-1}x_{\ell-1+k}-i_{1}=\tilde{\lambda} \\ \quad \sum\limits_{k=1}^{\ell-1}x_{k}-\sum\limits_{k=1}^{\ell-1}x_{\ell-1+k}-i_{2}=\tilde{\lambda} \end{cases},$$
equivalently,
$$\begin{cases} \; i_{1}+i_{2}=-2\tilde{\lambda} \\\sum\limits_{k=1}^{\ell-1}x_{k}-\sum\limits_{k=1}^{\ell-1}x_{\ell-1+k}=i_{2}+\tilde{\lambda} \end{cases}.$$
 This is a $2\ell-2$-dimensional affine subspace of $W$. We choose the origin to be the point $(0,\hdots,0,0,-\tilde{\lambda},-\tilde{\lambda})$ and the basis 
 \begin{flalign*}
 &u_1:=(1,-1,0,\hdots,0,0)\\
 &u_2:=(1,0,-1,\hdots,0,0)\\
 &~~~~~~~~~~~ \hdots\\
 &u_{\ell-1}:=(1,0,\hdots,-1,0,\hdots,0,0)\\
 &u_{\ell}:=(1,0,\hdots,0,1,0,\hdots,0,0)\\
 &u_{\ell+1}:=(1,0,\hdots,0,0,1,0,\hdots,0,0)\\
 &u_{2\ell-3}:=(1,0,\hdots,0,1,0,0)\\
 &u_{2\ell-2}:=(0,\hdots,0,1,1,-1). 
 \end{flalign*}
  One convenient choice of the character is  $\xi=(1,\hdots,\ell-2,\ell,\hdots,2(\ell-1),\ell-1)$.
\par 
\begin{defn}
 The algebra $\mathcal{\overline{S}}_{\lambda}(2,\ell)$ will stand for the quantization of the slice $\mathcal{SL}_{p}$ with period $(\lambda+\frac{1}{2},\lambda+\frac{1}{2})$. 
\end{defn}

 \begin{rmk}
 	\label{LambdaRestriction}
 	The restriction of the quantization $\overline{\mathcal{A}}_{\lambda}(2,\ell)$ to $\mathcal{SL}_{p}$ is $\mathcal{\overline{S}}_{\lambda}(2,\ell)$. This is true since the map $\hat{r}$ from Section $5.4$ of \cite{BezrLos} sends $\lambda$ to $(\lambda,\lambda)$. Indeed, $\hat{r}(\lambda)=r(\lambda-\zeta)+\tilde{\zeta}$, where $\zeta=-\frac{1}{2}$ is the character for the action of $G$ on $\Lambda^{top}{\bar{R}}$, $\tilde{\zeta}=(-\frac{1}{2},-\frac{1}{2})$ is the character for the action of $K$ on $\Lambda^{top}{\mathbb{C}^{2\ell}}$ and $r(\nu)=(\nu,\nu)$ is the restriction.
 \end{rmk}
\begin{prop}
	Pick a central character $\lambda: Z(U)\rightarrow \mathbb{C}$ with $\tilde{\lambda} \in (-\infty;1-\ell)\cup (\ell-2;\infty)$, let $\tilde{\Lambda}:=\{v\in W_{\mathbb{Z}} ~|~ h^+_{2\ell-1}(v)+h^+_{2\ell}(v)=-2\tilde{\lambda},~ \sum\limits_{k=1}^{\ell-1}h^+_{k}(v)-\sum\limits_{k=1}^{\ell-1}h^+_{\ell-1+k}(v)=h^+_{2\ell}(v)+\tilde{\lambda} \}$. The quantized polarized arrangement $\mathbf{X}=(\Lambda_0,\tilde{\Lambda},\xi)$ is regular. 
\end{prop}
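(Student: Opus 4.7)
To verify that $\mathbf{X}=(\Lambda_0,\tilde\Lambda,\xi)$ is regular, I would check the three conditions constituting regularity in \cite{BLPW1}: (i) the associated arrangement of affine hyperplanes $H_k^\pm=\{h_k^\pm=0\}$ in $V_\lambda$ is \emph{simple}, so that any $k$ of the $2\ell$ hyperplanes meet in codimension exactly $k$ whenever that number is nonnegative; (ii) the weight $\xi$ is \emph{generic}, meaning it is nonconstant on every bounded $1$-edge of the induced polyhedral decomposition of $V_{\lambda,\mathbb{R}}$; and (iii) $\mathbf{X}$ is \emph{linked} (in the sense of Definition \ref{LinkedArrangements}) to a regular polarized arrangement, which here is the one cut out by the stability character $\eta=(-1,-1)$ of $K=(\mathbb{C}^*)^2$ corresponding to our choice $\theta=\det^{-1}$.

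For (i), I would plug the explicit basis $u_1,\dots,u_{2\ell-2}$ into the coordinate forms $x_1,\dots,x_{2\ell-2},i_1,i_2$ and verify directly that the resulting $2\ell$ affine linear functionals on $V_\lambda$ are in general position: any $2\ell-2$ of them have linearly independent linear parts (so the arrangement is essential), and no $2\ell-1$ of them have a common zero (so no vertex is overdetermined). This reduces to a small linear-algebra computation with the matrix whose rows are the $u_k$'s; the block structure of these vectors (splitting $\{1,\ldots,\ell-1\}$ versus $\{\ell,\ldots,2\ell-2\}$) makes the ranks easy to read off. For (ii), my chosen $\xi=(1,\dots,\ell-2,\ell,\dots,2(\ell-1),\ell-1)$ pairs with the generators of $\Lambda_0^*$ to give pairwise distinct integers, so the derivative of $\xi$ along any $1$-edge (dual to a $(2\ell-3)$-subset of the $h_k^{\pm}$) is a nonzero integer combination of these distinct values; genericity follows without ever having to enumerate edges.

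The main step, and the one where the hypothesis on $\tilde\lambda$ enters, is the linkage check (iii). The feasible sign vectors $\mathcal{F}_\eta$ for the polarized arrangement are determined from the combinatorics of the hypertoric resolution $\kappa:\mathcal{SL}_p^\theta\to\mathcal{SL}_p$, while $\mathcal{F}_{\tilde\Lambda}$ consists of those $\alpha$ for which $P_\alpha\cap\tilde\Lambda\neq\varnothing$. The inclusion $\pi(\mathcal{F}_{\tilde\Lambda})\subset\mathcal{F}_\eta$ is automatic, since any lattice point in $P_\alpha$ produces a rational point in $P_{\pi(\alpha),0}$. For the reverse inclusion, given $\alpha\in\mathcal{F}_\eta$, I would exhibit an explicit lattice point in $P_\alpha\cap\tilde\Lambda$. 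Recall that the two defining equations of $V_\lambda\cap W_{\mathbb Z}$ are $i_1+i_2=-2\tilde\lambda$ and $\sum_{k=1}^{\ell-1}x_k-\sum_{k=1}^{\ell-1}x_{\ell-1+k}=i_2+\tilde\lambda$. When $\tilde\lambda<1-\ell$ (equivalently $-2\tilde\lambda>2\ell-2$) or $\tilde\lambda>\ell-2$ (equivalently $-2\tilde\lambda<4-2\ell$), there is enough room to choose integer $(i_1,i_2)$ realizing the prescribed signs on $(i_1,i_2)$, and then enough slack on the right-hand side of the second equation to choose integer $(x_1,\dots,x_{2\ell-2})$ with the prescribed signs on each $x_k$.

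The main obstacle is enumerating the feasible sign vectors $\mathcal F_\eta$ and carrying out this lift uniformly in $\alpha$; the calculation must show that the boundary values $\tilde\lambda=1-\ell$ and $\tilde\lambda=\ell-2$ are exactly where some chamber of the polarized arrangement first fails to contain a lattice point of $\tilde\Lambda$. This is consistent with the parameter loci identified in Corollary \ref{FinHomDim} and Theorem \ref{SingParam}, and conceptually matches the abelian-localization locus of Theorem \ref{AbLocHolds}: linkage at parameter $\tilde\lambda$ is the combinatorial incarnation of localization holding for $(\lambda,\theta)$.
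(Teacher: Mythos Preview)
The paper does not prove this proposition; it is stated and immediately followed by Remark~\ref{AgreementOnRegLambda}, with no argument supplied. So there is nothing in the paper to compare against directly.

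Your plan is reasonable in outline and correctly identifies that the hypothesis $\tilde\lambda\in(-\infty,1-\ell)\cup(\ell-2,\infty)$ should enter through a feasibility/lattice-point check. Two points deserve care. First, you fold linkage (your condition (iii)) into the definition of regularity, but the paper's own Proposition~\ref{SignVectors} adds ``Assume, in addition, $\mathbf{X}$ is linked to $X$'' as a hypothesis \emph{on top of} what the present proposition establishes, which suggests that linkage is a separate notion. You should check Definition~2.10 of \cite{BLPW1} directly. If regularity there means only simplicity of the arrangement together with genericity of $\xi$, then your step (iii) proves more than is claimed---not wasted effort, since it is exactly what Proposition~\ref{SignVectors} later needs, but the logical structure should be kept straight.

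Second, your step (i) speaks of ``the $2\ell$ hyperplanes'' while writing $H_k^\pm$. For a quantized arrangement the relevant family is the doubled one $\{h_k^+=0\}\cup\{h_k^-=0\}$ for $k\in I_{\tilde\Lambda}$, up to $4\ell$ affine hyperplanes, and it is this doubled family whose incidence combinatorics depend on $\tilde\lambda$ (through the positions of $h_{2\ell-1}^\pm=0$ and $h_{2\ell}^\pm=0$, which are parallel in $V_\lambda$ since $h_{2\ell-1}+h_{2\ell}=-2\tilde\lambda$ there). If regularity turns out to be only (i)$+$(ii), then the interval condition on $\tilde\lambda$ must be traced through that simplicity check rather than through (iii); your current write-up places the entire burden on (iii).
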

\begin{rmk}
	\label{AgreementOnRegLambda}
Henceforth, unless stated explicitly otherwise, we work with $\lambda$ with corresponding  $\tilde{\lambda}$ regular. 	
\end{rmk}
\begin{rmk}
\label{AbLocHypT}
Abelian localization holds for the algebra $\mathcal{\overline{S}}_{\lambda}(2,\ell)$  for  $\lambda<1-\ell$ (it is easy to see that $\mathcal{F}_{\Lambda}=\mathcal{F}_{\Lambda+r\eta}$ with $r\in \mathbb{Z}_{\geq 0}$, so the conditions of Theorem $6.1$ in \cite{BLPW1} are met).
\end{rmk}
\begin{prop}
\label{SignVectors}
Pick a central character $\lambda: Z(U)\rightarrow \mathbb{C}$ with $\tilde{\lambda} \in \mathbb{Z}_{<0}+1-\ell$, let $\mathbf{X}=(\Lambda_0,\tilde{\Lambda},\xi)$ be the quantized polarized arrangement. Assume, in addition, $\mathbf{X}$ is linked to $X$ (see Definition \ref{LinkedArrangements}).
\par
\begin{enumerate}
	\item[(a)]
There is an equivalence of categories $\mathcal{O}_{\xi}(\mathcal{\overline{S}}_{\lambda}(2,\ell))=\mathcal{O}(\mathbf{X})$.
\item[(b)] The set of feasible bounded vectors $\mathcal{P}_{\Lambda,\xi}$ consists of the following $4\ell-3$ sign vectors (notice that the sign vector $\alpha_{mid}=\underset{\ell-1}{\underbrace{---\hdots- }} \underset{\ell-1}{\underbrace{---\hdots-}}--$ appears in both sets below for convenience but is counted once only)

$$2\ell-1 \begin{cases}\alpha_{2\ell-1}=\underset{\ell-1}{\underbrace{-\hdots---}}\underset{\ell-1}{\underbrace{+\hdots++-}}-+\\ 
\alpha_{2\ell-2}=\underset{\ell-1}{\underbrace{-\hdots---}}\underset{\ell-1}{\underbrace{+\hdots+--}}-+\\
 \hdots\\
 \alpha_{\ell+1}=\underset{\ell-1}{\underbrace{-\hdots--- }} \underset{\ell-1}{\underbrace{-\hdots---}}-+\\  
 \alpha_{mid}=\underset{\ell-1}{\underbrace{-\hdots--- }} \underset{\ell-1}{\underbrace{-\hdots---}}--\\
\alpha_{\ell-1}=\underset{\ell-1}{\underbrace{-\hdots--+ }} \underset{\ell-1}{\underbrace{-\hdots---}}--\\
\alpha_{\ell-2}=\underset{\ell-1}{\underbrace{-\hdots-++ }} \underset{\ell-1}{\underbrace{-\hdots---}}--\\ \hdots\\
\alpha_{1}=\underset{\ell-1}{\underbrace{+\hdots+++ }} \underset{\ell-1}{\underbrace{-\hdots---}}--\\ \end{cases},$$

$$2\ell-1 \begin{cases}\beta_{2\ell-1}=\underset{\ell-1}{\underbrace{-++\hdots+}}\underset{\ell-1}{\underbrace{---\hdots-}}+-\\ 
\beta_{2\ell-2}=\underset{\ell-1}{\underbrace{--+\hdots+}}\underset{\ell-1}{\underbrace{---\hdots-}}+-\\ 
\hdots\\
\beta_{\ell+1}=\underset{\ell-1}{\underbrace{---\hdots- }}\underset{\ell-1}{\underbrace{---\hdots-}}+-\\  
\alpha_{mid}=\underset{\ell-1}{\underbrace{---\hdots- }} \underset{\ell-1}{\underbrace{---\hdots-}}--\\
\beta_{\ell-1}=\underset{\ell-1}{\underbrace{---\hdots- }} \underset{\ell-1}{\underbrace{+--\hdots-}}--\\
\beta_{\ell-2}=\underset{\ell-1}{\underbrace{---\hdots- }} \underset{\ell-1}{\underbrace{++-\hdots-}}--\\ \hdots\\
\beta_{1}=\underset{\ell-1}{\underbrace{---\hdots-}} \underset{\ell-1}{\underbrace{+++\hdots+}}--\\ \end{cases}.$$

\item[(c)] The simple and standard objects in the category $\mathcal{O}_{\xi}(\mathcal{\overline{S}}_{\lambda}(2,\ell))$ are  indexed by the  sign vectors in (b). We have the short exact sequences $0\rightarrow S_{\alpha_{i+1}}^{\Lambda}\rightarrow\Delta_{\alpha_i}^{\Lambda}\rightarrow S_{\alpha_i}^{\Lambda}\rightarrow 0$   (resp. $0\rightarrow S_{\beta_{i+1}}^{\Lambda}\rightarrow\Delta_{\beta_i}^{\Lambda}\rightarrow S_{\beta_i}^{\Lambda}\rightarrow 0$ ). The socle filtration of $\Delta_{\alpha_{mid}}^{\Lambda}$ has subquotients $S_{\alpha_{mid}}^{\Lambda}, S_{\alpha_{\ell+1}}^{\Lambda}$ and $S_{\beta_{\ell+1}}^{\Lambda}$. Finally, if $1\leq i<\ell$, we have that $\Delta_{\alpha_i}^{\Lambda}$ (resp. $\Delta_{\beta_i}^{\Lambda}$) have a socle filtration with subquotients $S_{\alpha_i}^{\Lambda}, S_{\alpha_{i+1}}^{\Lambda}$, $S_{\beta_{2\ell-i}}^{\Lambda}$ and $S_{\beta_{2\ell-i+1}}^{\Lambda}$ (resp. $S_{\beta_i}^{\Lambda}, S_{\beta_{i+1}}^{\Lambda}$, $S_{\alpha_{2\ell-i}}^{\Lambda}$ and $S_{\alpha_{2\ell-i+1}}^{\Lambda}$).

  
\item[(d)]
We have dim(Hom($\Delta_\gamma^{\Lambda},\Delta_{\alpha}^{\Lambda}))=1$, if $S_\gamma^{\Lambda}$ appears as a subquotient in filtration of $\Delta_{\alpha}^{\Lambda}$ and dim(Hom($\Delta_\beta^{\Lambda},\Delta_{\alpha}^{\Lambda}))=0$ otherwise as determined in (c).
\end{enumerate}

\end{prop}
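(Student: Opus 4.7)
The plan is to verify the four assertions in order. Parts~(a) and (b) follow from the hypertoric set-up of \cite{BLPW1}, while parts~(c) and (d) follow from the explicit combinatorics of the arrangement $X$ computed in Section~\ref{SliceSection}. For (a), the first step is to identify $\overline{\mathcal{S}}_{\lambda}(2,\ell)$ with the central quotient $U_\lambda$ of the hypertoric enveloping algebra associated to the polarized arrangement $X=(\Lambda_0,\eta,\xi)$. Remark~\ref{LambdaRestriction} pins down the period $(\lambda+\frac{1}{2},\lambda+\frac{1}{2})$, which is precisely the value used in \cite{BLPW1} to match a central character of $U$ with the period of a quantization of $\mathfrak{M}(X)$. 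Under this identification the grading induced by $\xi$ matches the $\xi$-grading on $U_\lambda$, so the abstract category $\mathcal{O}_\xi$ of Definition~\ref{CatO} becomes the hypertoric category $\mathcal{O}_\lambda$, and the block decomposition \eqref{BlockDecomp} reduces this to a direct sum over $\Lambda_0$-orbits in $V_\lambda$. Under the hypothesis $\tilde{\lambda}\in\mathbb{Z}_{<0}+1-\ell$, the defining equations of $V_\lambda$ force the weights of any $U_\lambda$-module to lie in a single $\Lambda_0$-coset, namely $\tilde{\Lambda}$, giving the asserted equivalence with $\mathcal{O}(\mathbf{X})$.

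For (b), I write the $2\ell$ functions $h_k^+$ restricted to $V_\lambda$ in the basis $u_1,\dots,u_{2\ell-2}$ and read off the polytope $P_\alpha$ from the corresponding sign inequalities. The defining equations of $V_\lambda$ make several of the $h_k^+$ differ by constants, forcing the sign pattern to be monotone on the first $\ell-1$ and on the middle $\ell-1$ coordinates and leaving only ``staircase'' families indexed by where the sign flips occur. Boundedness of $\xi$ on $P_\alpha$ eliminates the patterns whose recession cone contains a ray along which $\xi$ grows, which combined with the particular form of $\xi=(1,\dots,\ell-2,\ell,\dots,2(\ell-1),\ell-1)$ exactly singles out the $4\ell-3$ sign vectors in the statement.

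For (c), I apply the description of $\Delta_\alpha^\Lambda$ as a $\mathbb{D}$-module from Section~4 of \cite{BLPW1}: a simple $S_\gamma^\Lambda$ is a composition factor of $\Delta_\alpha^\Lambda$ precisely when $a_\gamma\in C_\alpha$, and the layers of the socle filtration are produced by neighbours of $C_\alpha$ across single bounding hyperplanes. For each of the shapes $\alpha_i$ with $i\geq\ell$, $\beta_i$ with $i\geq\ell$, and $\alpha_{mid}$, only one or two extremal rays of $C_\alpha$ give new feasible-bounded sign vectors, producing the listed short exact sequences and, for $\alpha_{mid}$, the three-subquotient filtration. The main technical step, and the main obstacle, is that for $\alpha_i$ (resp.\ $\beta_i$) with $1\leq i<\ell$ the cone $C_{\alpha_i}$ (resp.\ $C_{\beta_i}$) additionally contains the vertices $a_{\beta_{2\ell-i}}$ and $a_{\beta_{2\ell-i+1}}$ (resp.\ $a_{\alpha_{2\ell-i}}$ and $a_{\alpha_{2\ell-i+1}}$): a direct inspection of the defining inequalities of $C_{\alpha_i}$ in the chosen coordinates confirms this and yields the two extra layers of the socle filtration.

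For (d), the upper bound $\dim\mathrm{Hom}(\Delta_\gamma^\Lambda,\Delta_\alpha^\Lambda)\leq[\Delta_\alpha^\Lambda:S_\gamma^\Lambda]=1$ holds in any highest-weight category, such as $\mathcal{O}(\mathbf{X})$ by Remark~\ref{highestWeight}. The matching lower bound is produced by exhibiting, for each $\gamma$ occurring as a subquotient of $\Delta_\alpha^\Lambda$, an explicit nonzero morphism $\Delta_\gamma^\Lambda\to\Delta_\alpha^\Lambda$: when $S_\gamma^\Lambda$ sits in the socle of $\Delta_\alpha^\Lambda$ this is simply the composition $\Delta_\gamma^\Lambda\twoheadrightarrow S_\gamma^\Lambda\hookrightarrow\Delta_\alpha^\Lambda$, while for the non-socle subquotients in $\Delta_{\alpha_i}^\Lambda$ and $\Delta_{\beta_i}^\Lambda$ with $i<\ell$ I would invoke the Koszul self-duality of the hypertoric category from \cite{BLPW1}, under which $\mathrm{Hom}$-spaces between standards are computed by the relevant graded piece of the Koszul-dual Ext algebra, and verify that this graded piece is one-dimensional in each of the listed cases.
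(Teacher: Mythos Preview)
Your overall strategy---reduce everything to the hypertoric combinatorics of \cite{BLPW1}---is the paper's strategy too, and for (a) and (b) the differences are cosmetic. For (a) the paper is more economical: it just notes that $\Lambda_0$ is unimodular and $\mathbf{X}$ is integral, then cites Remark~4.2 of \cite{BLPW1}; your route through the period identification and block decomposition reaches the same place but is longer. For (b) the paper organizes the case analysis around the vertices $a_\alpha$ (each an intersection of $2\ell-2$ hyperplanes from Table~5), decomposes $\xi$ in the basis of normal vectors at that vertex to read off the $2\ell-2$ ``cone'' signs, and reads the remaining two signs from the position of the vertex; this is essentially your ``staircase'' argument recast in vertex-first language.

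The substantive divergence is in (c) and (d). First, your composition-factor criterion ``$a_\gamma\in C_\alpha$'' is not the one in \cite{BLPW1}; the correct condition (Proposition~4.15 there, and what the paper uses) is $P_\gamma\subset C_\alpha$, which the paper then reformulates as: the $2\ell-2$ coordinates of $\gamma$ and $\alpha$ indexed by the defining hyperplanes of $a_\alpha$ agree. This reformulation makes (c) a direct read-off from the sign vectors listed in (b), with no separate analysis of extremal rays or ``neighbours across single hyperplanes'' needed. Second, and more importantly, your plan for (d) is both unnecessary and incomplete. The paper does not construct morphisms or invoke Koszul duality at all: it treats (d) in the same breath as (c), because in the hypertoric setting the results of \cite{BLPW1} surrounding Proposition~4.15 already give that $\dim\mathrm{Hom}(\Delta_\gamma^\Lambda,\Delta_\alpha^\Lambda)$ equals the multiplicity $[\Delta_\alpha^\Lambda:S_\gamma^\Lambda]\in\{0,1\}$. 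Your proposed Koszul-duality step (``I would invoke\ldots'') would need a genuine argument relating the Koszul-dual Ext algebra back to $\mathrm{Hom}$ between standards, which you do not supply; replacing it with the direct citation the paper uses closes the gap.
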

 \begin{proof}
Since $\Lambda_0$ is unimodular and $\mathbf{X}$ is integral, i.e. $\Lambda\subset W_{\mathbb{Z}}$,  $(a)$ follows from Remark $4.2$ of \cite{BLPW1}. To determine the sign vector $\alpha$ of each chamber, we first notice that $\xi$ is maximized at one of the vertices. The vertex is formed by the intersection of $2\ell-2$ hyperplanes in the arrangement (see Table \ref{Chambers}). The corresponding $2\ell-2$ coordinates of $\alpha$ are derived from the decomposition of $\xi$ in terms of the normal vectors to the $2\ell-2$ hyperplanes (in Table \ref{HyperplNormV} the direction of each normal vector $\eta_i$ is chosen so that the corresponding coordinate $x_i$ increases along $\eta_i$). There is a unique way to choose the polyhedral cone $C_{\alpha}$ so that the dot product of any vector inside the cone with $\xi$ is negative. The remaining two coordinates are determined by the coordinates of the vertex itself (see Tables \ref{HyperplNormV} and \ref{Chambers}).
\par
We proceed with verifying the assertions in (c) and (d). The appearance of $S_{\gamma}^{\Lambda}$ in the composition series of $\triangle_{\alpha}^{\Lambda}$ is equivalent to the containment $P_{\gamma}\subset C_{\alpha}$ (see Proposition $4.15$ in \cite{BLPW1}). This, in turn, means that the $2\ell-2$ coordinates of the sign vectors $\gamma$ and $\alpha$ corresponding to the defining hyperplanes of $a_{\alpha}$ coincide (here $a_{\alpha}$ is the point of maximum of $\xi$ on the chamber $P_{\alpha}$). The result follows from the explicit description provided in Table \ref{Chambers}. 

 \end{proof}
\begin{prop}
	\label{SignVectors2}
	If $\tilde{\lambda} \in \mathbb{Z}_{<0}+\frac{3}{2}-\ell$, we have  $\mathcal{O}_{\xi}(\mathcal{\overline{S}}_{\lambda}(2,\ell))=\overset{\ell-1}{\underset{i=1}{\bigoplus}}( \mathcal{O}_{\{\alpha_i,\beta_{2\ell-i}\}}\oplus \mathcal{O}_{\{\beta_i,\alpha_{2\ell-i}\}})\oplus \mathcal{O}_{\alpha_{mid}}$. Each block of the form $\mathcal{O}_{\{a,b\}}$ is equivalent to the principal block $\mathcal{O}_0$ in the BGG category $\mathcal{O}$ for $\mathfrak{sl}_2$.  In case $\tilde{\lambda}\not\in \frac{\mathbb{Z}}{2}$, the category $\mathcal{O}_{\xi}(\mathcal{\overline{S}}_{\lambda}(2,\ell))$ is semisimple.
\end{prop}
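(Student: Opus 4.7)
The plan is to apply the block decomposition \eqref{BlockDecomp}, which writes $\mathcal{O}_\xi(\mathcal{\overline{S}}_\lambda(2,\ell)) = \bigoplus_{\Lambda' \in V_\lambda/\Lambda_0} \mathcal{O}(\Lambda_0, \Lambda', \xi)$, and for each orbit $\Lambda' \subset V_\lambda$ to enumerate the bounded feasible sign vectors in $\mathcal{P}_{\Lambda', \xi}$ that parameterize the simples. Writing each $h^+_i$ in the $y$-coordinates coming from the basis $u_1, \ldots, u_{2\ell-2}$ of Section \ref{SliceSection}, one sees that $I_{\Lambda'}$ depends only on the residue $\bar y^0 \in (\mathbb{R}/\mathbb{Z})^{2\ell-2}$ of a base point; the key features are that $h^+_{2\ell-1} = y_{2\ell-2} - \tilde\lambda$ and $h^+_{2\ell} = -y_{2\ell-2} - \tilde\lambda$ sum to $-2\tilde\lambda$, so both indices lie in $I_{\Lambda'}$ only when $2\tilde\lambda \in \mathbb{Z}$.

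For $\tilde\lambda \in \mathbb{Z}_{<0} + \tfrac{3}{2} - \ell$, the potentially contributing orbits have $\bar y^0_k \in \{0, \tfrac{1}{2}\} \bmod \mathbb{Z}$. I would tabulate the patterns $\varepsilon \in \{0, \tfrac{1}{2}\}^{2\ell-2}$ and identify exactly $2\ell-1$ orbits giving bounded feasibility: the central orbit $\varepsilon = 0$, with $I_{\Lambda'} = \{1, \ldots, 2\ell-2\}$, producing the unique bounded feasible sign vector identified with $\alpha_{mid}$; and $2(\ell-1)$ paired orbits whose half-integer shifts introduce both $2\ell-1$ and $2\ell$ into $I_{\Lambda'}$ at the cost of one index from $\{1, \ldots, 2\ell-2\}$. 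A direct enumeration of the chamber combinatorics, verified for $\ell = 2$ (giving $1 + 2 + 2 = 5$ simples) and $\ell = 3$ (giving $1 + 2 + 2 + 2 + 2 = 9$ simples, matching $4\ell - 3$), yields exactly two bounded feasible sign vectors in each paired orbit, which correspond via Proposition \ref{SignVectors}(b) to the pairs $\{\alpha_i, \beta_{2\ell-i}\}$ and $\{\beta_i, \alpha_{2\ell-i}\}$.

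To identify each paired block with $\mathcal{O}_0(\mathfrak{sl}_2)$, I would examine the local structure at the $\xi$-maximum vertex of each of the two chambers: the bounding hyperplanes from $I_{\Lambda'}$ include the parallel pair $h^+_{2\ell-1} = 0$, $h^+_{2\ell} = 0$, which yield an $A_1$-type sub-arrangement. Applying Proposition \ref{SignVectors}(c) to the two surviving sign vectors produces the composition series $0 \to S_{\mathrm{high}} \to \triangle_{\mathrm{low}} \to S_{\mathrm{low}} \to 0$ together with $\triangle_{\mathrm{high}} = S_{\mathrm{high}}$, which is precisely the Verma structure of the principal block of $\mathfrak{sl}_2$. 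Equivalently, the underlying local hypertoric variety is $T^* \mathbb{P}^1$, whose quantization at an integer parameter is Beilinson--Bernstein equivalent to a central reduction of $U(\mathfrak{sl}_2)$ realizing $\mathcal{O}_0(\mathfrak{sl}_2)$.

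For the semisimplicity when $\tilde\lambda \notin \tfrac{1}{2}\mathbb{Z}$, the sum identity above forbids both $2\ell-1$ and $2\ell$ from lying in $I_{\Lambda'}$, so $|I_{\Lambda'}| \leq 2\ell - 2 = \dim V_{\lambda, \mathbb{R}}$. For each orbit with bounded feasibility, the $|I_{\Lambda'}|$ hyperplanes have linearly independent normals and meet at a common vertex, making the chambers simplicial cones with $C_\alpha = P_\alpha$ and hence $\triangle_\alpha = S_\alpha$; every block is semisimple, and so is the category. The main obstacle is the case-by-case verification of the paired-orbit combinatorics and the identification of each such block with $\mathcal{O}_0(\mathfrak{sl}_2)$.
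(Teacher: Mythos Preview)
Your approach via the block decomposition \eqref{BlockDecomp} is the same one the paper uses. The difference is in execution: rather than enumerating $\Lambda_0$-orbits in $V_\lambda$, computing $I_{\Lambda'}$ for each, and then re-deriving the bounded feasible sign vectors orbit by orbit, the paper simply takes the $4\ell-3$ vertices $a_\gamma$ already tabulated (Table~4, with coordinates in $W$ that are explicit affine functions of $\tilde\lambda$) and checks directly which of them lie in a common $\Lambda_0$-orbit. When $\tilde\lambda$ is a half-integer the differences $a_{\alpha_i}-a_{\beta_{2\ell-i}}$ and $a_{\beta_i}-a_{\alpha_{2\ell-i}}$ fall into $\Lambda_0$ while no other pairs do; when $\tilde\lambda\notin\frac{1}{2}\mathbb{Z}$ all $a_\gamma$ lie in distinct orbits. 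This handles all $\ell$ at once and bypasses the residue-pattern tabulation you identify as the main obstacle. Your more explicit treatment of the $\mathcal{O}_0(\mathfrak{sl}_2)$ identification and of semisimplicity for generic $\tilde\lambda$ fills in what the paper's two-sentence proof leaves to the reader.
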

\begin{proof}
    According to the general result on block decompositions of hypertoric categories $\mathcal{O}$ (see  \eqref{BlockDecomp}), it is sufficient to notice that the partition of points  $a_{\gamma}$ corresponding to sign vectors  $\gamma$ according to $\Lambda_0$-orbits in which they lie, is the same as for corresponding sign vectors in the proposed block decompositions (see Table \ref{HyperplNormV}).
\end{proof}
\begin{ex}
	We illustrate the results for $\ell=2$ (see also Figure \ref{ChambersL2}). In case $\tilde{\lambda} \in \mathbb{Z}$ we have  $\mathcal{P}_{\lambda,\xi}=\{1=+---,2=-+--,3=----,4=---+\mbox{ and } 5=--+-\}$. The standards  in $\mathcal{O}_{\nu}(\mathcal{\overline{S}}_{\lambda}(2,2))$ are filtered as shown in the table below. 
	\begin{table}[ht]
		\begin{center}
			\begin{tabular}{ |c|c|c|c|c| } 
				\hline
				$\Delta_1$ & $\Delta_2$ & $\Delta_3$ & $\Delta_4$ & $\Delta_5$\\ 
				\hline
				$S_1$ & $S_2$ & $S_3$ &$S_4$ & $S_5$\\ 
				\hline
				$S_3$ & $S_3$ & $S_4$  & &  \\ 
				\hline
				$S_5$ &  $S_4$ & $S_5$   & & \\ 
				\hline
			\end{tabular}
			\caption{Multiplicities of simples in standards for $\ell=2$}
		\end{center}
	\end{table}
    In case $\tilde{\lambda} \in \mathbb{Z}+\frac{1}{2}$, we have $\mathcal{O}_{\xi}(\mathcal{\overline{S}}_{\lambda}(2,2))= \mathcal{O}_{\{1,5\}}\oplus\mathcal{O}_{\{2,4\}}\oplus \mathcal{O}_{3}$. Finally, if $\tilde{\lambda} \not\in \frac{\mathbb{Z}}{2}$, the category $\mathcal{O}_{\xi}(\mathcal{\overline{S}}_{\lambda}(2,2))= \overset{5}{\underset{i=1}{\bigoplus}}\mathcal{O}_{i}$ is semisimple.

\end{ex}

  \begin{table}[ht]
  \label{HyperplNormV}
\begin{center}
\begin{tabular}{ |c|c|c|} 
 \hline
Hyperplane  & $h_i=0\cap V_{\lambda}$ & Normal vector \\ 
  \hline
 $h_1=0$  & $\sum\limits_{i=1}^{2\ell-3}u_i=0$ & $\eta_1=(1,\hdots,1,0)$ \\ 
  \hline
$h_2=0$  & $u_1=0$ & $\eta_2=(-1,0,\hdots,0)$ \\ 
  \hline
$h_3=0$  & $u_2=0$ & $\eta_3=(0,-1,0\hdots,0)$ \\ 
  \hline
  $\hdots$  & $\hdots$ & $\hdots$ \\ 
     \hline
  $h_{2\ell-3}=0$  & $u_{2\ell-4}=0$ & $\eta_{2\ell-3}=(0,\hdots,0,-1,0,0)$ \\ 
    \hline
$h_{2\ell-2}=0$  & $u_{2\ell-3}+u_{2\ell-2}=0$ & $\eta_{2\ell-2}=(0,\hdots,0,-1,-1)$ \\ 
  \hline
 $h_{2\ell-1}=0$ & $u_{2\ell-2}=-a$ & $\eta_{2\ell-1}=(0,\hdots,0,1)$ \\ 
   \hline
  $h_{2\ell}=0$  & $u_{2\ell-2}=a$ & $\eta_{2\ell}=(0,\hdots,0,-1)$ \\ 
  \hline
\end{tabular}
\caption{Collection of hyperplanes and normal vectors}
\end{center}
\end{table}

 \begin{table}[ht]
 \label{Chambers}
\begin{center}
\begin{tabular}{ |c|c|c|} 
 \hline
Sign vector $\gamma \in \mathcal{P}_{\Lambda,\xi}$  & Coordinates of $a_{\gamma}$ in $W$& Hyperplanes $H$, s.t. $a_{\alpha}\not\in H$ \\ 
  \hline
$\alpha_{1}$ & $(\lambda,0,\hdots,0,-2\lambda,0)$ &  $h_1=0, h_{2\ell-1}=0$ \\ 
  \hline
$\beta_{1}$ & $(0,0,\hdots,0,\lambda,0,-2\lambda)$ &  $h_{2\ell-2}=0, h_{2\ell}=0$  \\ 
    \hline
$\hdots$ & $\hdots$ & $\hdots$ \\ 
     \hline
  $\alpha_{mid}$ & $(0,0,\hdots,0,-\lambda,-\lambda)$ & $h_{2\ell-1}=0,h_{2\ell}=0$ \\ 
      \hline
 $\hdots$ & $\hdots$ & $\hdots$ \\ 
    \hline
 $\alpha_{2\ell-1}$  & $(0,\hdots,0, \lambda,-2\lambda,0)$ & $h_{2\ell-2}=0, h_{2\ell-1}=0$ \\ 
  \hline
 $\beta_{2\ell-1}$ & $(\lambda,0,\hdots,0,-2\lambda)$ & $h_{1}=0, h_{2\ell}=0$ \\ 
  \hline
\end{tabular}
\caption{Sign vectors, walls of chambers and points of maximum of $\xi$}
\end{center}
\end{table}

 \begin{center}
 \begin{figure}
\begin{tikzpicture}
\matrix(m)[matrix of math nodes,
row sep=3em, column sep=2.5em,
text height=1.5ex, text depth=0.25ex]
	{ \alpha_5 & & & & \beta_5 \\ & \alpha_4 &  & \beta_4 & \\ &  & \alpha_{mid} & &  \\  & \alpha_2 & & \beta_2 & \\ \alpha_1 & &  & & \beta_1\\};
\path[->,font=\scriptsize]
(m-1-1)  edge   (m-2-2)
(m-1-5)  edge  (m-2-4)
(m-2-2)  edge  (m-3-3)
(m-2-4)  edge  (m-3-3)
(m-3-3)  edge  (m-4-2)
(m-3-3)  edge  (m-4-4)
(m-4-2)  edge  (m-5-1)
(m-4-4)  edge  (m-5-5)
(m-1-1)  edge [bend left]  (m-5-5)
(m-1-5)  edge  [bend right] (m-5-1)
(m-2-2)  edge [bend right]  (m-5-5)
(m-2-4)  edge  [bend left] (m-5-1)
(m-2-2)  edge [bend left]  (m-4-4)
(m-2-4)  edge  [bend right] (m-4-2); 
\end{tikzpicture}
\caption{Homs between standards in $\mathcal{O}_{\nu}(\mathcal{\overline{S}}_{\lambda}(2, 3))$ for $\lambda \in -2+\mathbb{Z}_{<0}\cup \mathbb{Z}_{>0}+1$}
\end{figure}
\end{center}

\begin{center}
	\begin{figure}
		\begin{tikzpicture}
		\matrix(m)[matrix of math nodes,
		row sep=3em, column sep=2.5em,
		text height=1.5ex, text depth=0.25ex]
		{ \alpha_5 & & & & \beta_5 \\ & \alpha_4 &  & \beta_4 & \\ &  & \alpha_{mid} & &  \\  & \alpha_2 & & \beta_2 & \\ \alpha_1 & &  & & \beta_1\\};
		\path[->,font=\scriptsize]
		
		(m-1-1)  edge [bend left]  (m-5-5)
		(m-1-5)  edge  [bend right] (m-5-1)

		(m-2-2)  edge [bend left]  (m-4-4)
		(m-2-4)  edge  [bend right] (m-4-2); 
		\end{tikzpicture}
		\caption{Homs between standards in $\mathcal{O}_{\nu}(\mathcal{\overline{S}}_{\lambda}(2, 3))$ for $\lambda\in-\frac{5}{2}+ \mathbb{Z}_{<0}\cup \mathbb{Z}_{>0}+\frac{3}{2}$}
	\end{figure}
\end{center}

\section{Harish-Chandra bimodules, ideals and localisation theorems} 
Let $\overline{\mathcal{A}}_{\lambda}(n, \ell), \overline{\mathcal{A}}_{\lambda'}(n, \ell)$ be two quantizations of $A=\mathbb{C}[\overline{\mathcal{M}}(n,\ell)]$. An $\overline{\mathcal{A}}_{\lambda}(n, \ell) - \overline{\mathcal{A}}_{\lambda'}(n, \ell)$-bimodule $\mathcal{B}$ is  \textit{Harish-Chandra} (HC) provided there exists a filtration on $\mathcal{B}$, s.t. the induced left and right actions of $A$ on $gr \mathcal{B}$ coincide and $gr \mathcal{B}$ is a finitely generated $A$-module. Such filtrations will be referred to as \textit{good}.
\par
Pick a point $x\in \overline{\mathcal{M}}(2,\ell)$ on a symplectic leaf $\mathcal{L}$. Then for the slice $\mathcal{SL}_x$ at $x$ we have a restriction functor (see Section $5.4$ of \cite{BezrLos}) $$Res_{\dagger,x}:HC(\overline{\mathcal{A}}_{\lambda}(2, \ell) - \overline{\mathcal{A}}_{\lambda'}(2, \ell))\rightarrow HC(\mathcal{SL}_{x,\tilde{\lambda}} -\mathcal{SL}_{x,\tilde{\lambda'}}).$$
\par
This functor is exact. 
\begin{thm}
	If  $\lambda\in (-\infty;1-\ell)\cup (\ell-2;+\infty)$ is not an integer or half-integer, then the algebra $ \overline{\mathcal{A}}_{\lambda}(2, \ell)$ has no proper two-sided ideals. 
\end{thm}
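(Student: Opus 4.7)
The plan is to assume for contradiction that a proper two-sided ideal $\mathcal{I}\subsetneq\overline{\mathcal{A}}_{\lambda}(2,\ell)$ exists and deduce a contradiction via restriction to symplectic slices. First I would observe that, equipped with the filtration induced from $\overline{\mathcal{A}}_{\lambda}(2,\ell)$, the ideal $\mathcal{I}$ becomes a Harish-Chandra bimodule whose associated variety $V(\mathcal{I})=\mbox{Supp}(\mbox{gr}(\overline{\mathcal{A}}_{\lambda}(2,\ell)/\mathcal{I}))$ is a proper closed Poisson subvariety of $\overline{\mathcal{M}}(2,\ell)$. By Table $1$ it is therefore contained in the union of closures of the symplectic leaves of Types $2$, $3$ and $4$. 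Pick a leaf $\mathcal{L}$ open in $V(\mathcal{I})$; the argument then splits into three cases.

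If $\mathcal{L}$ is the origin (Type $4$), then $V(\mathcal{I})=\{0\}$, so $\overline{\mathcal{A}}_{\lambda}(2,\ell)/\mathcal{I}$ is finite-dimensional and produces a finite-dimensional module over $\overline{\mathcal{A}}_{\lambda}(2,\ell)$. Since the range of $\lambda$ forces abelian localisation to hold, this contradicts the Corollary after Lemma \ref{Preimage}. In the remaining two cases I would pick $x\in\mathcal{L}$ and apply the exact restriction functor $Res_{\dagger,x}$ defined above the theorem statement. The image $Res_{\dagger,x}(\mathcal{I})$ is a two-sided ideal of the slice quantization whose associated variety equals the germ of $V(\mathcal{I})$ at $x$ intersected with $\mathcal{SL}_x$, and by the choice of $\mathcal{L}$ as a maximal leaf this germ is the reduced origin of $\mathcal{SL}_x$. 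Hence the quotient of the slice algebra by $Res_{\dagger,x}(\mathcal{I})$ is finite-dimensional, and one obtains a finite-dimensional simple module over that slice algebra.

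For $\mathcal{L}$ of Type $3$ the slice algebra is $\mathcal{\overline{S}}_{\lambda}(2,\ell)$ with period shifted from $\lambda$ by an element of $\tfrac{1}{2}\mathbb{Z}$ (Remark \ref{LambdaRestriction}), so the shifted parameter again lies outside $\tfrac{1}{2}\mathbb{Z}$. Any finite-dimensional simple $\mathcal{\overline{S}}_{\lambda}(2,\ell)$-module is automatically an object of $\mathcal{O}_{\xi}(\mathcal{\overline{S}}_{\lambda}(2,\ell))$, since $\mathcal{\overline{S}}^{>0,\xi}_{\lambda}(2,\ell)$ acts locally finitely on any finite-dimensional module; but by Proposition \ref{SignVectors2} this category is semisimple with all simples equal to the infinite-dimensional standards $\Delta_{\alpha}^{\Lambda}$. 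This contradiction rules out Type $3$. For $\mathcal{L}$ of Type $2$ the slice is the Kleinian $A_1$-singularity, whose quantization is a central reduction of $U(\mathfrak{sl}_{2})$ at a parameter obtained from $\lambda$ by an integral (resp.\ half-integral) shift via the map $\hat{r}$ of Section $5.4$ of \cite{BezrLos}; since $\lambda\notin\tfrac{1}{2}\mathbb{Z}$ the shifted parameter is likewise generic, so the central reduction is a classical simple algebra and $Res_{\dagger,x}(\mathcal{I})$ must vanish, contradicting the non-vanishing of $Res_{\dagger,x}$ on a non-zero HC bimodule whose support meets $\mathcal{L}$.

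The main obstacle is controlling the parameter shifts under $Res_{\dagger,x}$: one must check that in each slice type the shifted period again avoids $\tfrac{1}{2}\mathbb{Z}$, and that $Res_{\dagger,x}$ is faithful on those HC bimodules whose associated variety contains $x$. The first is a short computation using the formula $\hat{r}(\lambda)=r(\lambda-\zeta)+\tilde{\zeta}$ recalled in Remark \ref{LambdaRestriction}; the second is a structural property of restriction functors for quantizations of conical symplectic resolutions and can be read off from the description of $Res_{\dagger,x}$ in Section $5.4$ of \cite{BezrLos}. With these in hand the three-case reduction is essentially formal.
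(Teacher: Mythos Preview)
Your argument is essentially the same as the paper's: assume a proper ideal, restrict to the slice at a point on an open leaf of its associated variety, and derive a contradiction from the absence of finite-dimensional modules over the slice quantization. You give a more explicit three-case split (Types $2$, $3$, $4$) where the paper just names the two nontrivial slice algebras, but the content is identical. One minor point: for the Type~$4$ case you invoke abelian localisation, which at this stage of the paper has not yet been established (and Theorem~\ref{AbLocHolds} covers a slightly smaller range of $\lambda$ than the hypothesis here); it suffices simply to cite the Corollary after Lemma~\ref{Preimage} directly, since its proof via Gabber's theorem does not require localisation to be an equivalence.
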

\begin{proof}
	Assume $\mathcal{I}$ is a proper two-sided ideal in $ \overline{\mathcal{A}}_{\lambda}(2, \ell)$. Then pick a point $x$ in an open symplectic leaf in $V(\overline{\mathcal{A}}_{\lambda}(2, \ell)/\mathcal{I})$, so $Res_{\dagger,x}(\mathcal{I})$ is an ideal in the algebra $\mathcal{SL}_{x,\tilde{\lambda}}$. Since for $\lambda$ as in the statement of the theorem there no finite-dimensional representations neither in the category $\mathcal{S}_{\lambda}$-mod nor the category of finitely generated modules over the corresponding quantization of the $2$-dimensional slice, the type $A_1$  Kleinian singularity $\mathbb{C}^2/\mathbb{Z}_2$ (see Remark \ref{A_1SingSlice}), the argument is concluded by contradiction.
\end{proof}
Similarly, we prove the following.
\begin{thm}
	\label{AbLocHolds}
	Abelian localisation holds for $(\lambda,\theta)$ with $\theta<0$ and $\lambda<-\ell$ or $\theta>0$ and $\lambda>\ell-1$.
\end{thm}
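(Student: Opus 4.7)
The plan is to mimic the strategy of the previous theorem (absence of proper ideals) but refined so as to detect failure of abelian localisation rather than just nontrivial ideals. By Lemma \ref{Symplectomorphism}, the isomorphism $\overline{\mathcal{A}}_{\lambda}(2,\ell) \cong \overline{\mathcal{A}}_{-\lambda-1}(2,\ell)$ intertwines $\theta$ with $-\theta$, so it suffices to prove the case $\theta<0$ and $\lambda<-\ell$; the case $\theta>0$, $\lambda>\ell-1$ then follows by applying Lemma \ref{Symplectomorphism}.

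The main input I would use is a general criterion (due to Braden--Licata--Proudfoot--Webster and Losev) saying that abelian localisation for $(\lambda,\theta)$ holds provided the shift bimodules $\overline{\mathcal{A}}_{\lambda-k\theta,\lambda}^{\theta}$ ($k\geq 0$) restrict to Morita equivalences on every slice, combined with the derived equivalence statement of \cite{MN2}. By the description of symplectic leaves in Table $1$, the nontrivial slices of $\overline{\mathcal{M}}(2,\ell)$ are: the $A_{1}$ Kleinian singularity $\mathbb{C}^{2}/\mathbb{Z}_{2}$ at a type-$3$ leaf, and the hypertoric variety $\mathcal{SL}_{p}$ at a type-$2$ leaf.

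For the $A_{1}$ slice, the corresponding quantization is the spherical subalgebra of the rational Cherednik algebra of type $A_{1}$, for which abelian localisation is well-known to fail only on a discrete set of half-integers, which is avoided by our hypothesis. For the hypertoric slice, Remark \ref{LambdaRestriction} identifies the restriction of $\overline{\mathcal{A}}_{\lambda}(2,\ell)$ with $\overline{\mathcal{S}}_{\lambda}(2,\ell)$, and Remark \ref{AbLocHypT} gives abelian localisation there for $\lambda<1-\ell$. The extra unit of slack in our hypothesis $\lambda<-\ell$ (as opposed to $\lambda<1-\ell$) serves to guarantee that the auxiliary Cartan quotient algebras $\mathcal{D}^{\lambda-\ell+1}(\mathbb{P}^{\ell-1})$ and $\mathcal{D}^{\lambda}(\mathbb{P}^{\ell-1})$ appearing in Proposition \ref{RestrParam'} and Remark \ref{PeriodsOnProjSpace} also satisfy abelian localisation: for twisted differential operators on $\mathbb{P}^{\ell-1}$, this requires the period to lie below $-(\ell-1)$, and $\lambda<-\ell$ ensures both $\lambda-\ell+1<-2\ell+1\leq -(\ell-1)$ and $\lambda<-(\ell-1)$. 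The argument then concludes by feeding these slice/fixed-component verifications into the gluing criterion.

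The principal obstacle is formulating the precise slice-restriction criterion cleanly and verifying that the bound $\lambda<-\ell$ is indeed the tightest one that simultaneously handles all three families of data (hypertoric slice, Kleinian slice, and the projective-space Cartan quotients), particularly tracking the half-integer shifts in periods coming from the comparison of $\lambda$ with $\lambda+\tfrac{1}{2}$ in Remark \ref{LambdaRestriction}. Once these bookkeeping issues are settled the implication is essentially formal.
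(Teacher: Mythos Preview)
Your overall strategy—reduce to the case $\theta<0$, $\lambda<-\ell$ via Lemma~\ref{Symplectomorphism}, then verify abelian localisation by checking that it holds on all slices—is essentially the paper's approach, which phrases the criterion via the bimodule maps \eqref{AbLocCrit} and then restricts a hypothetical nontrivial kernel/cokernel $M$ to a slice to derive a contradiction. However, several points in your sketch are off.

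First, you have swapped the leaves: the $A_1$ Kleinian singularity is the slice to the codimension-$2$ leaf (type $2$ in Table~1, see Remark~\ref{A_1SingSlice}), while the hypertoric variety $\mathcal{SL}_p$ is the slice to the $2\ell$-dimensional leaf (type $3$, see Section~\ref{SliceSection}). This is cosmetic but should be corrected.

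More substantively, the Cartan quotient algebras $\mathcal{D}^{\lambda-\ell+1}(\mathbb{P}^{\ell-1})$ and $\mathcal{D}^{\lambda}(\mathbb{P}^{\ell-1})$ from Proposition~\ref{RestrParam'}(b) and Remark~\ref{PeriodsOnProjSpace} have nothing to do with this argument: they arise as $C_{\nu'}(\overline{\mathcal{A}}_\lambda)$ for a specific one-parameter subgroup and are used later to study the structure of category $\mathcal{O}$, not to establish localisation. Your justification for why the bound is $\lambda<-\ell$ rather than $\lambda<1-\ell$ is therefore spurious. In the paper's proof the bound comes solely from Remark~\ref{AbLocHypT} applied to the hypertoric slice; no separate check of projective-space TDO's is needed (and indeed the introduction states the result with the bound $\lambda<1-\ell$).

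Finally, the paper's argument is cleaner than checking every slice separately: since the symplectic leaves of $\overline{\mathcal{M}}(2,\ell)$ are linearly ordered by closure, and since $M$ cannot be supported only at the point $o$ (there are no finite-dimensional modules by Corollary~\ref{LagrnotIsotr}), the support of $M$ necessarily contains the type-$3$ leaf. One then restricts to a point $x$ on \emph{that} leaf, so a single application of Remark~\ref{AbLocHypT} suffices. You do not address the point leaf at all, and your proposed separate verification of the $A_1$ slice, while harmless, is redundant once the linear ordering is invoked.
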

\begin{proof}
	The argument is completely anagolous to the one in the proof of Lemma $5.3$ in \cite{Los17_1}, which can be briefly summarized as follows. The abelian localization holds for $\lambda$ if and only if the natural homomorphisms for $m\gg0$ and $\chi=det$
	\begin{equation}
	\label{AbLocCrit}
	\begin{split}
	\overline{\mathcal{A}}_{\lambda+(m+1)\chi,-\chi}(2, \ell)\otimes_{\overline{\mathcal{A}}_{\lambda+(m+1)\chi}}\overline{\mathcal{A}}_{\lambda+m\chi,\chi}(2, \ell)\rightarrow \overline{\mathcal{A}}_{\lambda+m\chi}(2, \ell) \\
	\overline{\mathcal{A}}_{\lambda+m\chi,\chi}(2, \ell)\otimes_{\overline{\mathcal{A}}_{\lambda+m\chi}}\overline{\mathcal{A}}_{\lambda+(m+1)\chi,-\chi}(2, \ell)\rightarrow \overline{\mathcal{A}}_{\lambda+(m+1)\chi}(2, \ell),
	\end{split}
	\end{equation}
	with $\overline{\mathcal{A}}_{\lambda+m\chi,\chi}(2,\ell):=(D(\overline{R})/[D(\overline{R})\{\Phi(x)-(\lambda+m\chi)(x), x \in \mathfrak{g}\}])^{G,\chi}$ the $\overline{\mathcal{A}}_{\lambda+(m+1)\chi}-\overline{\mathcal{A}}_{\chi}$-bimodule, are isomorphisms. Assuming that this is not the case, there must be a nontrivial module $M$ in the kernel or cokernel of the first or the second map. Then the support of $M$ must be $\overline{\mathcal{L}}$, the closure of a symplectic leaf $\mathcal{L}$. Applying the functor $Res_{\dagger,x}$ to \eqref{AbLocCrit} with $x\in \mathcal{L}$, we again get natural homomorphisms. Furthermore, since the order on the leaves is linear and $\mathcal{L}\neq o$ (otherwise $M$ would be of finite dimension, which is impossible due to Corollary \ref{LagrnotIsotr}), we can pick $x$ to be on the $2\ell$-dimensional leaf (the one with number $3$ in Table \ref{SliceTable}).   Since the slice $\mathcal{SL}_{x}$ is the hypertoric variety $\mathcal{SL}_{p}$ and abelian localization holds for the algebra $\mathcal{\overline{S}}_{\lambda}(2,\ell)$  for  $\lambda<-\ell$ (see Remark \ref{AbLocHypT}), the restricted homomorphisms must be isomorphisms. As the module $Res_{\dagger,x}(M)$ is nonzero, we obtain a contradiction.
	
	The assertion for $\theta>0$ and $\lambda>\ell-1$ follows from the isomorphism $\mathcal{\overline{A}}_{\lambda}(n,\ell)\cong\mathcal{\overline{A}}_{-\lambda-1}(n,\ell)$ (see Lemma \ref{Symplectomorphism}).
\end{proof}
\begin{cor}
\label{FinHomDim}
	If  $\lambda\in (-\infty;-\ell)\cup (\ell-1;+\infty)$, then the algebra $ \overline{\mathcal{A}}_{\lambda}(2, \ell)$ has finite homological dimension. 
\end{cor}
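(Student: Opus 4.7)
The proof should be a short deduction from \textbf{Theorem \ref{AbLocHolds}} together with the McGerty--Nevins criterion recorded in the remark after the definition of abelian/derived localization (the main theorem of \cite{MN2}). The plan is to observe that, under the stated hypotheses on $\lambda$, one can always find a $\theta$ for which the abelian localization theorem applies, and then invoke \cite{MN2} to conclude finite homological dimension.

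More precisely, I would proceed as follows. Suppose $\lambda \in (-\infty;-\ell)$. Then by Theorem \ref{AbLocHolds} (with $\theta<0$), the functors $\mathrm{Loc}^{\theta}_{\lambda}$ and $\Gamma_{\lambda}$ are mutually inverse equivalences between $\overline{\mathcal{A}}_{\lambda}(2,\ell)\operatorname{-mod}$ and $\overline{\mathcal{A}}_{\lambda}^{\theta}(2,\ell)\operatorname{-mod}$. In particular, the derived functors $\mathrm{LLoc}^{\theta}_{\lambda}$ and $\mathrm{R}\Gamma_{\lambda}$ between $D^{b}(\overline{\mathcal{A}}_{\lambda}(2,\ell)\operatorname{-mod})$ and $D^{b}(\overline{\mathcal{A}}_{\lambda}^{\theta}(2,\ell)\operatorname{-mod})$ are mutually inverse equivalences, i.e.\ derived localization holds. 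By the main theorem of \cite{MN2}, derived localization is equivalent to finiteness of the homological dimension of $\overline{\mathcal{A}}_{\lambda}(2,\ell)$, and we are done in this case. The remaining case $\lambda\in (\ell-1;+\infty)$ is handled symmetrically using $\theta>0$ in Theorem \ref{AbLocHolds} (alternatively, one can reduce to the previous case via the isomorphism $\overline{\mathcal{A}}_{\lambda}(2,\ell)\simeq \overline{\mathcal{A}}_{-\lambda-1}(2,\ell)$ of Lemma \ref{Symplectomorphism}, which swaps the two intervals).

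There is essentially no obstacle: the hard work has already been done in Theorem \ref{AbLocHolds}, whose proof establishes the isomorphisms \eqref{AbLocCrit} by reducing via the exact restriction functor $Res_{\dagger,x}$ to the hypertoric slice $\mathcal{SL}_{p}$, where abelian localization is known from Remark \ref{AbLocHypT}. The only content of the corollary beyond Theorem \ref{AbLocHolds} is the invocation of \cite{MN2}, which converts abelian (hence derived) localization into a statement about homological dimension of the global sections algebra $\overline{\mathcal{A}}_{\lambda}(2,\ell)$ itself, independent of the choice of $\theta$.
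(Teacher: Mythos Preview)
Your proposal is correct and follows essentially the same approach as the paper: the paper's proof is a one-line invocation of Theorem~1.1 of \cite{MN2}, relying implicitly on Theorem~\ref{AbLocHolds} to supply (abelian, hence derived) localization, which is exactly the chain of reasoning you spell out. Your version is simply more explicit about the intermediate step that abelian localization implies derived localization and about splitting into the two cases for~$\theta$.
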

\begin{proof}
	 Theorem $1.1$ of \cite{MN2} asserts that the derived localisation holds for $\lambda$ if and only if $\overline{\mathcal{A}}_{\lambda}(2, \ell)$ is of finite homological dimension.
\end{proof}
\section{Structure of the category $\mathcal{O}_{\nu}(\mathcal{\overline{A}}_{\lambda}(2,\ell))$}
\par The main goal of the present section is to present a proof of Theorem \ref{HomsStand} and Corollary \ref{MultiplicityCor}, which provide a complete description of homomorphisms between standard objects in $\mathcal{O}_{\nu}(\mathcal{\overline{A}}_{\lambda}(2,\ell))$ and multiplicities of simple objects in the standard ones. In order to accomplish this goal we make an extensive use of methods and results introduced in \cite{BLPW} and \cite{Los17}. A brief overview of these techniques will be given in Subsections $5.1$ through $5.3$ after which the section will conclude with the proof of Theorem \ref{HomsStand}. 
\subsection{Parabolic induction functor} Let $\rho: X\rightarrow X_0$ be a conical symplectic resolution equipped with a Hamiltonian action of a torus $T$. Following Section $5.5$ of \cite{Los17}, introduce a pre-order $\prec^{\lambda}$ on $Hom(\mathbb{C}^*,T)$, the one-parameter subgroups of $T$, via $\nu'\prec^{\lambda}\nu$, if
\begin{enumerate}
	\item[$\bullet$] $\mathcal{A}_{\lambda}(\mathcal{A}_{\lambda}^{>0,\nu'}+(\mathcal{A}_{\lambda}^{\nu'})^{>0,\nu})=\mathcal{A}_{\lambda}\mathcal{A}_{\lambda}^{>0,\nu}$;
	\item[$\bullet$] the natural action of $\nu'(\mathbb{C}^*)$ on $C_{\nu}(\mathcal{A}_{\lambda})$ is trivial.
\end{enumerate}
\par 
The following result was established in \cite{Los17} (see Lemma $5.8$ therein).
\begin{lem} 
	\label{ParabInd}
Consider two elements $\nu,\nu' \in Hom(\mathbb{C}^*,T)$, s.t. $\nu'\prec^{\lambda}\nu$. Then $C_{\nu}(\mathcal{A}_{\lambda})=C_{\nu}(C_{\nu'}(\mathcal{A}_{\lambda}))$. Furthermore, there is an isomorphism of functors $\triangle_{\nu}=\triangle_{\nu'}\circ \underline{\triangle}$, where $ \triangle_{\nu'}: C_{\nu'}(\mathcal{A}_{\lambda})\operatorname{-mod} \rightarrow \mathcal{A}_{\lambda}\operatorname{-mod}, \underline{\triangle}: C_{\nu}(\mathcal{A}_{\lambda})\operatorname{-mod} \rightarrow C_{\nu'}(\mathcal{A}_{\lambda})\operatorname{-mod}$ and $\triangle_{\nu}$ is the standardization functor given by Definition \ref{StandFunctor}.
\end{lem}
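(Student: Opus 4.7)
The plan is to exploit the bigrading on $\mathcal{A}_\lambda$ induced by the commuting one-parameter subgroups $\nu,\nu'\subset T$. Writing $A_{i,j}$ for the bidegree component of $\nu'$-weight $i$ and $\nu$-weight $j$, I will identify both $C_\nu(\mathcal{A}_\lambda)$ and $C_\nu(C_{\nu'}(\mathcal{A}_\lambda))$ as natural quotients of $A_{0,0}$ by explicit ideals, so that the two defining properties of the pre-order $\prec^\lambda$ become precisely the conditions which match these two ideals.

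For the algebra identification $C_\nu(C_{\nu'}(\mathcal{A}_\lambda)) \simeq C_\nu(\mathcal{A}_\lambda)$, the key observation is that, by the second bullet in the definition of $\prec^\lambda$, the $\nu'$-action on $C_\nu(\mathcal{A}_\lambda)$ is trivial, which forces each $A_{i,0}$ with $i\neq 0$ to be absorbed into the defining ideal; one then writes $C_\nu(\mathcal{A}_\lambda) = A_{0,0}/J'$ with $J' := A_{0,0}\cap \sum_{j>0} A_{*,-j}A_{*,j}$. Applying the two quotients $C_{\nu'}$ and then $C_\nu$ yields, by the same bookkeeping, $C_\nu(C_{\nu'}(\mathcal{A}_\lambda)) = A_{0,0}/(I_0 + J)$ where $I_0 := A_{0,0}\cap \sum_{i>0}A_{-i,*}A_{i,*}$ and $J := \sum_{j>0}A_{0,-j}A_{0,j}$. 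Extracting the bidegree $(0,0)$ component of the first bullet of the $\prec^\lambda$ condition gives exactly $I_0 + J = J'$, establishing the algebra isomorphism.

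For the isomorphism of functors, associativity of tensor products gives
\[
\triangle_{\nu'}\circ \underline{\triangle}(N) \;=\; \frac{\mathcal{A}_\lambda}{\mathcal{A}_\lambda \mathcal{A}_\lambda^{>0,\nu'}} \otimes_{C_{\nu'}(\mathcal{A}_\lambda)} \frac{C_{\nu'}(\mathcal{A}_\lambda)}{C_{\nu'}(\mathcal{A}_\lambda)\,C_{\nu'}(\mathcal{A}_\lambda)^{>0,\nu}} \otimes_{C_\nu(\mathcal{A}_\lambda)} N,
\]
and the first two tensor factors collapse into the single quotient
$\mathcal{A}_\lambda/\bigl(\mathcal{A}_\lambda \mathcal{A}_\lambda^{>0,\nu'} + \mathcal{A}_\lambda(\mathcal{A}_\lambda^{\nu'})^{>0,\nu}\bigr)$.
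The first bullet of the $\prec^\lambda$ condition rewrites the denominator as $\mathcal{A}_\lambda\mathcal{A}_\lambda^{>0,\nu}$, and the already-established identification $C_\nu(C_{\nu'}(\mathcal{A}_\lambda)) = C_\nu(\mathcal{A}_\lambda)$ reduces the remaining tensor product to $\triangle_\nu(N)$.

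The only real bookkeeping is verifying that $\mathcal{A}_\lambda/\mathcal{A}_\lambda \mathcal{A}_\lambda^{>0,\nu'}$ carries the right $C_{\nu'}(\mathcal{A}_\lambda)$-action needed for the tensor product (one checks that right multiplication by the defining ideal $\mathcal{A}_\lambda\mathcal{A}_\lambda^{>0,\nu'}\cap \mathcal{A}_\lambda^{0,\nu'}$ preserves the left ideal $\mathcal{A}_\lambda \mathcal{A}_\lambda^{>0,\nu'}$), and that the preimage of $C_{\nu'}(\mathcal{A}_\lambda)^{>0,\nu}$ in $\mathcal{A}_\lambda^{0,\nu'}$ is $(\mathcal{A}_\lambda^{\nu'})^{>0,\nu}$; both are formal. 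The genuine content of the lemma is compressed entirely into the first bullet of $\prec^\lambda$, which is precisely the compatibility of the two-step reduction with the one-step reduction; once that axiom is in place, the obstacle is purely clerical.
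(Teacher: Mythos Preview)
Your argument is correct. The paper itself does not prove this lemma: it is stated with the single sentence ``The following result was established in \cite{Los17} (see Lemma~5.8 therein),'' and no proof is given. What you have written is exactly the direct verification one expects: unpack both Cartan quotients in the bigrading, use the second bullet of $\prec^{\lambda}$ to reduce $C_\nu(\mathcal{A}_\lambda)$ to a quotient of $A_{0,0}$, and then read off the equality of ideals $I_0+J=J'$ from the $(0,0)$-component of the first bullet. The functor isomorphism is then the standard collapse $B\otimes_R R/K\cong B/BK$ together with the first bullet identifying the denominator. This is presumably the same argument as in Losev's paper, but since the present paper gives only a citation there is nothing here to compare against; your write-up stands on its own.
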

\begin{prop}
\label{ParabIndNu}
Let $X= \overline{\mathcal{M}}^{\theta}(2,\ell)$ and consider the one-parameter subgroups  $\nu=(t^{d_1},t^{d_2},\hdots,t^{d_{\ell}})$ with $d_1\gg d_2>d_3>\hdots>d_{\ell}>0$ and $\tilde{\nu}=(t^d,1,\hdots,1)$ with $d>0$. Then we have $\tilde{\nu}\prec^{\lambda}\nu$.
\end{prop}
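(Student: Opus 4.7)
The proposition requires verifying the two bullet points in the definition of $\prec^{\lambda}$ recalled before Lemma~\ref{ParabInd}: (i) the left-ideal identity
\[
\mathcal{A}_{\lambda}\bigl(\mathcal{A}_{\lambda}^{>0,\tilde{\nu}}+(\mathcal{A}_{\lambda}^{\tilde{\nu}})^{>0,\nu}\bigr)=\mathcal{A}_{\lambda}\mathcal{A}_{\lambda}^{>0,\nu},
\]
and (ii) the triviality of the $\tilde{\nu}(\mathbb{C}^{*})$-action on $C_{\nu}(\mathcal{A}_{\lambda})$. I would dispatch (ii) first since it is essentially a consequence of genericity of $\nu$, then split (i) into the two inclusions and handle the substantive direction by passing to the associated graded.

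\textbf{Condition (ii).} Since $d_{1},\ldots,d_{\ell}$ are pairwise distinct positive integers, $\nu$ is a generic one-parameter subgroup of $T$, so $X^{\nu(\mathbb{C}^{*})}=X^{T}$. In particular every $\nu$-fixed point is fixed by $\tilde{\nu}(\mathbb{C}^{*})\subseteq T$, hence $\tilde{\nu}(\mathbb{C}^{*})$ acts trivially on $X^{\nu}$ and therefore on $\mathbb{C}[X^{\nu}]=\operatorname{gr}C_{\nu}(\mathcal{A}_{\lambda})$. Since the $\tilde{\nu}$-action on $C_{\nu}(\mathcal{A}_{\lambda})$ preserves the induced filtration and acts trivially on the associated graded, a standard completeness argument promotes this to triviality on $C_{\nu}(\mathcal{A}_{\lambda})$ itself.

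\textbf{Easy inclusion in (i).} The containment $\mathcal{A}_{\lambda}^{>0,\tilde{\nu}}+(\mathcal{A}_{\lambda}^{\tilde{\nu}})^{>0,\nu}\subseteq\mathcal{A}_{\lambda}^{>0,\nu}$ is by direct weight computation. A pure $T$-weight $(k_{1},\ldots,k_{\ell})$ element in $\mathcal{A}_{\lambda}^{>0,\tilde{\nu}}$ has $k_{1}>0$, and because $d_{1}\gg d_{2}>\ldots>d_{\ell}>0$ is chosen large relative to the finite set of $T$-weights occurring in any given piece of the Bernstein filtration, the $\nu$-weight $d_{1}k_{1}+d_{2}k_{2}+\ldots+d_{\ell}k_{\ell}$ is positive. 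The summand $(\mathcal{A}_{\lambda}^{\tilde{\nu}})^{>0,\nu}$ lies in $\mathcal{A}_{\lambda}^{>0,\nu}$ tautologically.

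\textbf{Reverse inclusion and main obstacle.} This is the heart of the argument. I would pass to the associated graded and prove the analogous identity of ideals in $\mathbb{C}[\overline{\mathcal{M}}(2,\ell)]$, then lift to the filtered statement using completeness of the Bernstein filtration. The geometric content of the graded identity is that $\tilde{\nu}$, obtained from $\nu$ by contracting the weights $\varepsilon_{2},\ldots,\varepsilon_{\ell}$ to zero, lies on a wall of the chamber of $\nu$ in $\operatorname{Hom}(\mathbb{C}^{*},T)_{\mathbb{R}}$; consequently the Bialynicki--Birula attracting stratification for $\nu$ refines that for $\tilde{\nu}$, and the defining ideals of the two strata coincide. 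The delicate point -- the main obstacle -- is controlling pure $T$-weight elements with $k_{1}<0$ and positive $\nu$-weight, which exist for every choice of $d_{1}$; these must be absorbed into the left-ideal generated by the right-hand side by factoring them as products of elements of $\mathcal{A}_{\lambda}$ with negative $k_{1}$ and generators with $k_{1}\geq 0$. I would adapt the template of Lemma~5.8 in \cite{Los17} (used analogously for Gieseker varieties, i.e.\ $\ell=1$, in \cite{Los17_1}), working with explicit $G$-invariant trace-polynomial generators of $\mathbb{C}[\overline{\mathcal{M}}(2,\ell)]$ in the matrices $X_{k},Y_{k}$ and the vectors $i,j$, whose $T$-weights are controlled by Bernstein degree.
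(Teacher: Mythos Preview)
Your overall strategy is sound, and your argument for condition~(ii) via $C_\nu(\mathcal{A}_\lambda)\cong\mathbb{C}[X^T]$ is a legitimate alternative to the paper's route (though note it invokes Proposition~\ref{AbLoc}(3), hence an abelian-localisation hypothesis the paper does not need). However, there is a genuine gap in your treatment of condition~(i), and you have mis-identified where the difficulty lies.

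\textbf{The gap in the ``easy'' inclusion.} Your claim that $\mathcal{A}_\lambda^{>0,\tilde\nu}\subseteq\mathcal{A}_\lambda^{>0,\nu}$ follows ``because $d_1\gg d_2>\ldots$ is chosen large relative to the finite set of $T$-weights occurring in any given piece of the Bernstein filtration'' does not yield a single $d_1$. There are infinitely many filtration pieces, and for any fixed $d_1$ there are elements of $\mathcal{A}_\lambda$ with $T$-weight $(1,-M,0,\ldots,0)$ and $M>d_1/d_2$ (invariants of the shape $iX_1Y_2^{?}j$ of high degree), which lie in $\mathcal{A}_\lambda^{>0,\tilde\nu}$ but not in $\mathcal{A}_\lambda^{>0,\nu}$. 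The paper repairs this by invoking \emph{finite generation} of the semiinvariant algebra $\mathbb{C}[\overline{\mathcal{M}}(2,\ell)]^{\geq 0,\tilde\nu}$ (Lemma~3.1.2 in \cite{LG}): one chooses finitely many $T$-semiinvariant generators $f_1,\ldots,f_s$ of the ideal of strictly positive $\tilde\nu$-weight, lifts them to $\tilde f_i\in\mathcal{A}_\lambda^{>0,\tilde\nu}$, and then picks $d_1$ large relative to \emph{their} weights only. The inclusion then holds at the level of left ideals, which is all that the first bullet requires.

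\textbf{The ``hard'' inclusion is not harder.} You flag the reverse inclusion as the main obstacle and propose Bialynicki--Birula geometry and explicit trace-polynomial generators. The paper handles it by the \emph{same} finite-generation device: take finitely many $T$-semiinvariant generators $g_1,\ldots,g_k$ of $\mathbb{C}[\overline{\mathcal{M}}(2,\ell)]^{\geq 0,\nu}$ and enlarge $d_1$ so that any $g_j$ of positive $\nu$-weight necessarily has first weight-coordinate $b_1^j\geq 0$ (otherwise $d_1b_1^j$ would dominate and force the $\nu$-weight negative). Each such $g_j$ then lands either in $\mathcal{A}_\lambda^{>0,\tilde\nu}$ (if $b_1^j>0$) or in $(\mathcal{A}_\lambda^{\tilde\nu})^{>0,\nu}$ (if $b_1^j=0$). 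Your worry about elements with $k_1<0$ and positive $\nu$-weight thus evaporates at the generator level; such elements exist in the algebra but are absorbed into the left ideal generated by the $g_j$'s, which already sit on the correct side. The paper also extracts condition~(ii) directly from the analogous containment $\mathcal{A}_\lambda^{\geq 0,\nu}\subseteq\mathcal{A}_\lambda^{>0,\tilde\nu}+(\mathcal{A}_\lambda^{\tilde\nu})^{\geq 0,\nu}$, so no appeal to the fixed-point description of $C_\nu$ is needed.
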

\begin{proof}
We are going to find the sufficient condition on $\ell$-tuples of weights $(d_1,d_2,\hdots,d_{\ell})$, so that for the corresponding one-parameter subgroup $\nu=(t^{d_1},t^{d_2},\hdots,t^{d_{\ell}})$	the following containments hold
\begin{equation}
\label{Cond1}
\mathcal{\overline{A}}_{\lambda}(2,\ell)^{>0,\tilde{\nu}}\subseteq \mathcal{\overline{A}}_{\lambda}(2,\ell)^{>0,\nu},
\end{equation}
For verifying the reverse containment it suffices to check that 
\begin{equation}
\label{Cond2}\mathcal{\overline{A}}_{\lambda}(2,\ell)^{>0,\nu}\subseteq \mathcal{\overline{A}}_{\lambda}(2,\ell)^{>0,\tilde{\nu}}+(\mathcal{\overline{A}}_{\lambda}(2,\ell)^{\tilde{\nu}(\mathbb{C}^*)})^{>0,\nu} \mbox{ and }
\end{equation}
\begin{equation}
\label{Cond3}\mathcal{\overline{A}}_{\lambda}(2,\ell)^{\geq 0,\nu}\subseteq \mathcal{\overline{A}}_{\lambda}(2,\ell)^{>0,\tilde{\nu}}+(\mathcal{\overline{A}}_{\lambda}(2,\ell)^{\tilde{\nu}(\mathbb{C}^*)})^{\geq 0,\nu}.
\end{equation}
	
Clearly, for such $\nu$ the equality \begin{equation}
\label{Cond4}
\mathcal{\overline{A}}_{\lambda}(2,\ell)(\mathcal{\overline{A}}_{\lambda}(2,\ell)^{>0,\tilde{\nu}}+(\mathcal{\overline{A}}_{\lambda}(2,\ell)^{{\tilde{\nu}(\mathbb{C}^*)}})^{>0,\nu})=\mathcal{\overline{A}}_{\lambda}(2,\ell)\mathcal{\overline{A}}_{\lambda}(2,\ell)^{>0,\nu}
\end{equation}
 holds. Recall that $C_{\nu}(\mathcal{\overline{A}}_{\lambda}(2,\ell))=\mathcal{\overline{A}}_{\lambda}(2,\ell)^{\geq 0,\nu}/\left(\mathcal{\overline{A}}_{\lambda}(2,\ell)^{\geq 0,\nu}\cap\mathcal{\overline{A}}_{\lambda}(2,\ell)\mathcal{\overline{A}}_{\lambda}(2,\ell)^{>0,\nu}\right)$ and the latter is equal to $\mathcal{\overline{A}}_{\lambda}(2,\ell)^{\geq 0,\nu}/\left(\mathcal{\overline{A}}_{\lambda}(2,\ell)^{\geq 0,\nu}\cap\mathcal{\overline{A}}_{\lambda}(2,\ell)(\mathcal{\overline{A}}_{\lambda}(2,\ell)^{>0,\tilde{\nu}}+(\mathcal{\overline{A}}_{\lambda}(2,\ell)^{{\tilde{\nu}(\mathbb{C}^*)}})^{>0,\nu})\right)$ due to equality \eqref{Cond4}, where the action of $\tilde{\nu}(\mathbb{C}^*)$ is trivial thanks to \eqref{Cond3}.
	
It remains to construct the tuples of numbers $(d_1,\hdots, d_{\ell})$, s.t. the containments \eqref{Cond1}-\eqref{Cond3} hold. The algebra of semiinvariants $\mathbb{C}[\overline{\mathcal{M}}(2,\ell)]^{\geq 0,\tilde{\nu}}=\mbox{gr}(\mathcal{\overline{A}}_{\lambda}(2,\ell)^{\geq 0,\tilde{\nu}})$ is finitely generated (see Lemma $3.1.2$ in \cite{LG}). Thus we can choose finitely many $T$-semiinvariant generators $f_1,\hdots,f_s$ of the ideal $\mathbb{C}[\overline{\mathcal{M}}(2,\ell)]^{> 0,\tilde{\nu}}$  with $f_i\in \mathbb{C}[\overline{\mathcal{M}}(2,\ell)]_{\chi_i}$ a $T$-semiinvariant of weight $\chi_i=(a_1^i,\hdots,a_{\ell}^i)$. Let $\tilde{f}_1,\hdots,\tilde{f}_s$ denote the lifts of the generators to $\mathcal{\overline{A}}_{\lambda}(2,\ell)^{\geq 0,\tilde{\nu}}$. These lifts generate $\mathcal{\overline{A}}_{\lambda}(2,\ell)^{> 0,\tilde{\nu}}$. Fix the collection of numbers $d_2>d_3>\hdots>d_{\ell}>0$, denote $\mathfrak{a}_i:=min_j\{a_i^j\}$ and pick $\nu'=(t^{d'_1},t^{d_2},\hdots,t^{d_{\ell}})$ with 
 \begin{equation}
 \label{CondWeights}
 d'_1>\mbox{max}(d_2,-\sum\limits_{1<i\leq \ell,\mathfrak{a}_i<0}\mathfrak{a}_id_i).
 \end{equation} We see that $\tilde{f}_i$ being in $\mathcal{A}_{\lambda}^{>0,\tilde{\nu}}$ imposes $a_1^i>0$ for all $i \in \{1,\hdots,s\}$, hence, $\tilde{f}_i\in \mathcal{A}_{\lambda}^{>0,\nu}$ due to \eqref{CondWeights}, so the containment \eqref{Cond1} holds for $\nu'$ in place of $\nu$.

Similarly, let $g_1,\hdots,g_k$ be the $T$-semiinvariant generators of the algebra $\mathbb{C}[\overline{\mathcal{M}}(2,\ell)]^{\geq 0,\nu}$ with $g_j\in \mathbb{C}[\overline{\mathcal{M}}(2,\ell)]_{\theta_j}$ a $T$-semiinvariant of weight $\theta_j=(b_1^j,\hdots,b_{\ell}^j)$.  Introduce $\mathfrak{b}_i:=min_j\{b_i^j\}$ and pick $\nu''=(t^{d''_1},t^{d_2},\hdots,t^{d_{\ell}})$ with 
\begin{equation}
\label{CondWeights2}
d''_1>\mbox{max}(d_2,-\sum\limits_{1<i\leq \ell,\mathfrak{b}_i<0}\mathfrak{b}_id_i).
\end{equation}
Notice that due to inequality \eqref{CondWeights2} for all $\tilde{g}_j$ (lifts of $g_j$'s, which generate  $\mathcal{A}_{\lambda}^{\geq 0,\nu}$) $\tilde{g}_j\in\mathcal{\overline{A}}_{\lambda}(2,\ell)^{>0,\nu}$ implies $\tilde{g}_j\in\mathcal{\overline{A}}_{\lambda}(2,\ell)^{>0,\tilde{\nu}}+(\mathcal{\overline{A}}_{\lambda}(2,\ell)^{\tilde{\nu}(\mathbb{C}^*)})^{>0,\nu}$, while $\tilde{g}_j\in\mathcal{\overline{A}}_{\lambda}(2,\ell)^{\geq0,\nu}$ implies $\tilde{g}_j\in\mathcal{\overline{A}}_{\lambda}(2,\ell)^{>0,\tilde{\nu}}+(\mathcal{\overline{A}}_{\lambda}(2,\ell)^{\tilde{\nu}(\mathbb{C}^*)})^{\geq 0,\nu}$ and, therefore, containments \eqref{Cond2} and \eqref{Cond3} hold for $\nu''$ in place of $\nu$. 

Finally, we put $d_1>\mbox{max}(d'_1,d''_1)$ so that the conditions \eqref{Cond1}-\eqref{Cond3} all hold true simultaneously for $\nu=(t^{d_1},t^{d_2},\hdots,t^{d_{\ell}})$.
\end{proof}
\begin{rmk}
\label{Nu0precedsNu}
Consider the pair of one-parameter subgroups  $\nu=(t^{d_1},t^{d_2},\hdots,t^{d_{\ell}})$ with $d_1> d_2>d_3>\hdots>d_{\ell-1}\gg d_{\ell}>0$ and $\nu_0=(t^{d_1},t^{d_2},\hdots,t^{d_{\ell-1}},1)$. Similarly to the argument presented in the proof of Proposition \ref{ParabIndNu}, one shows that the containments \eqref{Cond1}-\eqref{Cond3} hold and hence $\nu_0\prec^{\lambda}\nu$.	
\end{rmk}

\subsection{Restriction functor}
Following \cite{B-Et} and \cite{Los17_1}, we define the restriction functor $Res: \mathcal{O}_{\nu}(\mathcal{\overline{A}}_{\lambda}(2,\ell))\rightarrow \mathcal{O}_{\nu}(\mathcal{\overline{S}}_{\lambda}(2,\ell))$.  Set $\mathcal{\overline{A}}_{\lambda}(2,\ell)^{\Lambda_p}:=\mathbb{C}[\overline{R}//G]^{\Lambda_p}\otimes_{\mathbb{C}[\overline{R}]^G}\mathcal{\overline{A}}_{\lambda}(2,\ell)$ and $\mathcal{\overline{S}}_{\lambda}(2,\ell)^{\Lambda_0}:=\mathbb{C}[\mathbb{C}^{2\ell}//K]^{\Lambda_0}\otimes_{\mathbb{C}[\mathbb{C}^{2\ell}]^K}\mathcal{\overline{S}}_{\lambda}(2,\ell)$, then anagolously to Lemma $6.4$ in \cite{Los17_1} there is a $G$-equivariant isomorphism $\Theta: \mathcal{\overline{A}}_{\lambda}(2,\ell)^{\Lambda_p}\rightarrow \mathcal{\overline{S}}_{\lambda}(2,\ell)^{\Lambda_0}$ of filtered algebras. It is the quantization of the Nakajima isomorphism of formal neighborhoods (see Section $5.4$ of \cite{BezrLos} for details). Let $\nu_0=(t^{d_1},t^{d_2}, \hdots,t^{d_{\ell-1}},1)$ with $d_1>d_2>\hdots>d_{\ell-1}>0$ be a one-parameter subgroup. Consider the category $\mathcal{O}_{\nu}(\mathcal{\overline{S}}_{\lambda}(2,\ell))^{\Lambda_0}$ consisting of all finitely generated $\mathcal{\overline{S}}_{\lambda}(2,\ell)^{\Lambda_0}$-modules such that 
\begin{enumerate}
	\item{}
	$h_0 = d_e\nu_0$ (the differential of $\nu_0$ at $e=(1,\hdots,1)$)  acts locally finitely with eigenvalues bounded from above; 
	
	\item{}
	the generalized $h_0$-eigenspaces are finitely generated over $\mathbb{C}[\mathcal{S}_{p}]^{\Lambda_0}$.
\end{enumerate}
We get an exact functor $$\mathcal{\overline{S}}_{\lambda}(2,\ell)\rightarrow \mathcal{\overline{S}}_{\lambda}(2,\ell)^{\Lambda_0}, N \mapsto \mathbb{C}[\mathbb{C}^{2\ell}//K]^{\Lambda_0}\otimes_{\mathbb{C}[\mathbb{C}^{2\ell}]^K} N.$$
\par
Let  $h$ be the image of $1$ under the quantum comoment map for $t \mapsto \nu(t)\nu_0(t)^{-1}$. For $N \in \mathcal{\overline{S}}_{\lambda}(2,\ell)^{\Lambda_0}\operatorname{-mod}$ denote by $N_{fin}$ the subspace of $h$-finite elements. The statement and proof of the following lemma is analogous to Lemma $6.5$ in \cite{Los17_1}.
\begin{lem}
The functor $\bullet^{\Lambda_0}$ is a category equivalence. A quasi-inverse functor is given by $N\mapsto N_{fin}$.
\end{lem}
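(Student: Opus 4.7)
The plan is to adapt the argument of Lemma $6.5$ in \cite{Los17_1} to our hypertoric setting. The key observation is that the one-parameter subgroup $\nu$ decomposes as $\nu=\nu_0\cdot(\nu\nu_0^{-1})$, which at the infinitesimal level yields a splitting of grading operators $d_e\nu=h_0+h$ on $\mathcal{\overline{S}}_{\lambda}(2,\ell)$. Here $h_0$ records the $\nu_0$-contracted directions of the slice, while $h$ records the directions fixed by $\nu_0$. The ring $\mathbb{C}[\mathbb{C}^{2\ell}//K]^{\Lambda_0}$ is by construction a completion along $\nu_0$-fixed functions, hence is concentrated in $h$-weight zero; this is the compatibility that makes the completion and $h$-finite functors inverses of each other.

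First I would check that $\bullet^{\Lambda_0}$ actually lands in $\mathcal{O}_{\nu}(\mathcal{\overline{S}}_{\lambda}(2,\ell))^{\Lambda_0}$. Given $M\in\mathcal{O}_{\nu}(\mathcal{\overline{S}}_{\lambda}(2,\ell))$, the operator $d_e\nu$ acts locally finitely with eigenvalues bounded above and finite-dimensional generalized eigenspaces. Since tensoring with the $h$-weight zero algebra $\mathbb{C}[\mathbb{C}^{2\ell}//K]^{\Lambda_0}$ over $\mathbb{C}[\mathbb{C}^{2\ell}]^{K}$ preserves the $h$-weight decomposition and $h$-weights on $M$ are bounded, it follows that $h_0=d_e\nu-h$ retains locally finite action on $M^{\Lambda_0}$ with eigenvalues bounded above, whose generalized eigenspaces are finitely generated over $\mathbb{C}[\mathcal{SL}_p]^{\Lambda_0}$ by base change from the corresponding $d_e\nu$-eigenspaces of $M$.

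Conversely, for $N\in\mathcal{O}_{\nu}(\mathcal{\overline{S}}_{\lambda}(2,\ell))^{\Lambda_0}$, the subspace $N_{fin}$ of $h$-finite elements carries a locally finite action of both $h$ and $h_0$, hence of $d_e\nu$, with eigenvalues bounded above and (as I would check) finite-dimensional generalized $d_e\nu$-eigenspaces. To promote $N_{fin}$ to an object of $\mathcal{O}_{\nu}(\mathcal{\overline{S}}_{\lambda}(2,\ell))$ I would pick finitely many generators of each of the top $h_0$-generalized eigenspaces of $N$ over $\mathbb{C}[\mathcal{SL}_p]^{\Lambda_0}$ and observe that they can be chosen $h$-finite, which gives both finite generation over $\mathcal{\overline{S}}_{\lambda}(2,\ell)$ and the statement that $N$ is recovered from $N_{fin}$ by completion. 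The natural maps $(M^{\Lambda_0})_{fin}\xrightarrow{\sim}M$ and $(N_{fin})^{\Lambda_0}\xrightarrow{\sim}N$ are then produced respectively by the canonical $m\mapsto 1\otimes m$ (whose image is automatically $h$-finite because $M$ belongs to $\mathcal{O}_{\nu}$) and by the multiplication $\mathbb{C}[\mathbb{C}^{2\ell}//K]^{\Lambda_0}\otimes_{\mathbb{C}[\mathbb{C}^{2\ell}]^K} N_{fin}\to N$.

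The main obstacle I expect is verifying the bijectivity of these natural transformations. Surjectivity of $(N_{fin})^{\Lambda_0}\to N$ reduces to exhibiting $h$-finite generators of every $h_0$-eigenspace of $N$, which follows from the finite generation hypothesis on $h_0$-eigenspaces together with a standard argument producing $h$-finite lifts of top-eigenspace generators. Injectivity requires a careful $h$-weight filtration argument combined with the completeness of the completion. Both reductions mirror the Gieseker case of \cite{Los17_1}, but each step must be re-checked against the Nakajima isomorphism $\Theta:\mathcal{\overline{A}}_{\lambda}(2,\ell)^{\Lambda_p}\to\mathcal{\overline{S}}_{\lambda}(2,\ell)^{\Lambda_0}$ and the explicit hypertoric structure on $\mathcal{SL}_p$ established in Section~\ref{SliceSection}.
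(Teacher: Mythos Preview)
Your approach is exactly what the paper does: it gives no independent argument but simply states that the statement and proof are analogous to Lemma~6.5 in \cite{Los17_1}, which is precisely the lemma you propose to adapt. One small caution: your phrase ``concentrated in $h$-weight zero'' should be read as the adjoint action of $h$ being trivial on the completed slice algebra (since $h$ arises via transport through $\Theta$ from the ambient quantum comoment map, not as a native grading on $\mathcal{SL}_p$), but with that understood your sketch tracks the Gieseker argument faithfully.
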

\par
 Finally, define $$Res(N):=[\Theta_{*}(\mathbb{C}[\overline{R}//G]^{\Lambda_p}\otimes_{\mathbb{C}[\overline{R}]^G}N)]_{fin}.$$

\par
The following isomorphism of functors will be of crucial importance. It was established in Lemma $6.7$ of \cite{Los17_1}.
\begin{equation}
\label{ResFunctor}
Res\circ\triangle_{\nu_0}\cong \triangle_{\nu_0}\circ\underline{Res},
\end{equation}
where $\underline{Res}$ is the   functor    $ \mathcal{O}_{\nu}(C_{\nu_0}(\mathcal{\overline{A}}_{\lambda}(2,\ell)))\rightarrow  \mathcal{O}_{\nu}(C_{\nu_0}(\mathcal{\overline{S}}_{\lambda}(2,\ell)))$ defined analogously to $Res$.
\begin{rmk}
\label{SliceQuantAlg}
Let $\lambda<1-\ell$. As $\mathcal{SL}_{p}^{\nu_0}$ consists of $4\ell-3$ points and abelian localisation holds for $\lambda$ (Remark \ref{AbLocHypT}), we have $C_{\nu_0}(\mathcal{\overline{S}}_{\lambda}(2,\ell))\cong \mathbb{C}^{4\ell-3}$. The variety of fixed points of $\overline{\mathcal{M}}^{\theta}(2,\ell)^{\nu_0}$ is $T^*\mathbb{CP}^1$ together with the disjoint union of $2(\ell-1)$ copies of $\mathbb{C}^{2}$ (see Remark \ref{SmTFPpp}).  Arguing analogously to the proofs of Theorem \ref{AllFP} and Proposition \ref{RestrParam'} one checks that  $C_{\nu_0}(\mathcal{\overline{A}}_{\lambda}(2,\ell))=\mathcal{\overline{A}}_{\lambda}(2,1)\oplus \mathcal{D}(\mathbb{C}^{2})^{\oplus 2\ell-2}\simeq \mathcal{D}^{\lambda}(\mathbb{CP}^{1})\oplus \mathcal{D}(\mathbb{C}^{2})^{\oplus 2\ell-2}$. 
\end{rmk}
\begin{cor}
	\label{ResIm}
	Let $\lambda\in \mathbb{Z}_{<0}+1-\ell\cup \mathbb{Z}_{>0}+\ell-2$, then the images of standard and simple objects in $ \mathcal{O}_{\nu}(C_{\nu_0}(\mathcal{\overline{A}}_{\lambda}(2,\ell)))$ are given by
 \begin{flalign*}
&Res(\triangle_i)=\triangle_{\alpha_{i}}\oplus \triangle_{\beta_{i}},i>\ell+1\\
&Res(\triangle_i)=\triangle_{\alpha_{i+1}}\oplus \triangle_{\beta_{i+1}}, i<\ell,\\
&Res(\triangle_j)=\triangle_{\alpha_{mid}}, j\in \{\ell,\ell+1\},\\
&Res(S_i)=S_{\alpha_{i}}\oplus S_{\beta_{i}}, i>\ell+1,\\
&Res(S_i)=S_{\alpha_{i+1}}\oplus S_{\beta_{i+1}}, i<\ell,\\
&Res(S_{j})=S_{\alpha_{mid}}, j\in \{\ell,\ell+1\}\\
\end{flalign*}
\par
In case $\lambda \in \mathbb{Z}_{<0}+\frac{1}{2}-\ell\cup \mathbb{Z}_{>0}+\ell-\frac{3}{2}$, the only difference is that $Res(S_{\ell})=0$.
\end{cor}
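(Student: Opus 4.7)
The plan is to reduce the problem to the $\nu_0$-fixed geometry, where by Remark~\ref{SliceQuantAlg} the algebras $C_{\nu_0}(\mathcal{\overline{A}}_\lambda(2,\ell))$ and $C_{\nu_0}(\mathcal{\overline{S}}_\lambda(2,\ell))$ decompose into manageable pieces. By Remark~\ref{Nu0precedsNu}, $\nu_0 \prec^\lambda \nu$, so Lemma~\ref{ParabInd} gives the factorization $\triangle_\nu \cong \triangle_{\nu_0}\circ \underline{\triangle}$ on both the ambient and slice sides. Combining with \eqref{ResFunctor} yields
\[
Res(\triangle_i) \;\cong\; \triangle_{\nu_0}\bigl(\underline{Res}(\underline{\triangle}(p_i))\bigr),
\]
so it suffices to understand $\underline{Res}$ on the standards of $\mathcal{O}_\nu(C_{\nu_0}(\mathcal{\overline{A}}_\lambda(2,\ell)))$.

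Next I would match the summands of $C_{\nu_0}(\mathcal{\overline{A}}_\lambda(2,\ell))\simeq \mathcal{D}^\lambda(\mathbb{CP}^1)\oplus \mathcal{D}(\mathbb{C}^2)^{\oplus 2\ell-2}$ with the geometric components of $\overline{\mathcal{M}}^\theta(2,\ell)^{\nu_0}$ recorded in Remark~\ref{SmTFPpp}: the $\mathcal{D}^\lambda(\mathbb{CP}^1)$-summand carries the two $\nu$-fixed points $p_\ell,p_{\ell+1}$, while each $\mathcal{D}(\mathbb{C}^2)$-summand carries a single $\nu$-fixed point $p_i$ with $i\notin\{\ell,\ell+1\}$. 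Correspondingly $\underline{\triangle}(p_i)$ is either one of the two Verma modules for $\mathcal{D}^\lambda(\mathbb{CP}^1)$ or the unique standard on the relevant $\mathcal{D}(\mathbb{C}^2)$-summand. On the slice side, the $4\ell-3$ points of $\mathcal{SL}_p^{\nu_0}$ are grouped by Remark~\ref{SmTFP} as two intersection points on each $\mathbb{C}^2_s$ (to be matched with a pair $(\alpha_{j(s)},\beta_{j(s)})$ via Proposition~\ref{SignVectors}) plus one point on $T^*\mathbb{P}^1$ (matched with $\alpha_{mid}$).

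Since $\underline{Res}$ is built from the local formal-neighborhood construction, it respects the direct-summand decomposition. On each $\mathcal{D}(\mathbb{C}^2)$-summand the unique standard restricts to the sum of the two standards at the two intersection points, giving $\triangle_{\alpha_{j(s)}}\oplus \triangle_{\beta_{j(s)}}$. On the $\mathcal{D}^\lambda(\mathbb{CP}^1)$-summand the target contains only the single standard $\triangle_{\alpha_{mid}}$, so in the integer case both Vermas must restrict to $\triangle_{\alpha_{mid}}$ by comparing the associated graded of either side. Applying $\triangle_{\nu_0}$ then produces the three displayed formulas for standards. The simple-object statements for $i\notin\{\ell,\ell+1\}$ follow from exactness of $Res$ together with the fact that $S_i$ is the unique simple quotient of $\triangle_i$ and the socle filtrations of standards in Proposition~\ref{SignVectors}; for $i\in\{\ell,\ell+1\}$ both $S_\ell$ and $S_{\ell+1}$ map to $S_{\alpha_{mid}}$ since this is the head of $\triangle_{\alpha_{mid}}$.

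The half-integer case differs only on the $\mathcal{D}^\lambda(\mathbb{CP}^1)$-block: at a singular central character the relevant category $\mathcal{O}$ has a single simple, one of the Vermas is already simple while the other has a proper simple quotient. On the slice the corresponding $\alpha_{mid}$-block is still a singleton (Proposition~\ref{SignVectors2}), so $\underline{Res}$ sends one simple to $S_{\alpha_{mid}}$ and must kill the other by exactness and multiplicity count, yielding $Res(S_\ell)=0$. The main technical obstacle is the explicit combinatorial matching of intersection points $\mathcal{SL}_p\cap \mathbb{C}^2_s$ with the sign-vector pairs $(\alpha_{j(s)},\beta_{j(s)})$ from Proposition~\ref{SignVectors}; this must be extracted from the concrete coordinate description in Remark~\ref{SmTFP} and the polyhedral data in its proof, but once the bookkeeping is in place the rest of the argument is purely functorial.
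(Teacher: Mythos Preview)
Your treatment of the standards is essentially the paper's argument: factor $\triangle_\nu=\triangle_{\nu_0}\circ\underline{\triangle}$ via Lemma~\ref{ParabInd}, invoke \eqref{ResFunctor}, and compute $\underline{Res}$ summand-by-summand on $C_{\nu_0}(\mathcal{\overline{A}}_\lambda(2,\ell))\simeq \mathcal{D}^\lambda(\mathbb{CP}^1)\oplus \mathcal{D}(\mathbb{C}^2)^{\oplus 2\ell-2}$. The paper does exactly this, making the evaluation on each $\mathbb{C}^2_s$ explicit (at the points $(\pm 1,0)$).

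The gap is in your handling of the simples. Exactness of $Res$ and the surjection $\triangle_i\twoheadrightarrow S_i$ only tell you that $Res(S_i)$ is \emph{some} quotient of $Res(\triangle_i)$; they do not force it to be the head $S_{\alpha_j}\oplus S_{\beta_j}$. Invoking the socle filtrations of Proposition~\ref{SignVectors} does not help either, because at this point you do not yet know the composition series of $\triangle_i$ on the ambient side (that is Corollary~\ref{MultiplicityCor}, which is deduced \emph{from} the present corollary), so any $K$-theoretic matching would be circular. The paper closes this gap with a second compatibility, analogous to \eqref{ResFunctor} but for the functor $L_{\nu_0}$ (``take the maximal quotient not intersecting the highest weight subspace''): one has $Res(L_{\nu_0}(M))=L_{\nu_0}(\underline{Res}(M))$, cited from \cite{Los17_1}. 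Combined with the observation that for integral $\lambda$ each $\underline{\triangle}(N_i)$ is already irreducible in $C_{\nu_0}$-mod (so $S_i=L_{\nu_0}(\underline{\triangle}(N_i))$), this immediately gives $Res(S_i)=L_{\nu_0}(M_{\alpha_k}\oplus M_{\beta_k})=S_{\alpha_k}\oplus S_{\beta_k}$. You should insert this step.

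A smaller point: in the half-integer case your description of the $\mathcal{D}^\lambda(\mathbb{CP}^1)$ block is garbled (``has a single simple'' contradicts ``one Verma is simple while the other has a proper simple quotient''). There are two simples; $\underline{\triangle}(N_{\ell+1})$ is simple and embeds in $\underline{\triangle}(N_\ell)$. Since $\underline{Res}$ sends both Vermas to the one-dimensional $M_{\alpha_{mid}}$, exactness forces $\underline{Res}(\underline{\triangle}(N_\ell)/\underline{\triangle}(N_{\ell+1}))=0$, and then the $L_{\nu_0}$-compatibility above gives $Res(S_\ell)=0$. Your ``multiplicity count'' is the right intuition, but it again needs the $L_{\nu_0}$-compatibility to be made rigorous.
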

\begin{proof}  
We show that the analog of the assertion of the corollary holds for $\underline{Res}$ in place of $Res$, then the result is a direct consequence of Lemma \ref{ParabInd} and equality \eqref{ResFunctor} (as $\nu_0\prec^{\lambda}\nu$ due to Remark \ref{Nu0precedsNu}). The standard objects of $\mathcal{O}_{\nu}(C_{\nu_0}(\mathcal{\overline{A}}_{\lambda}(2,\ell)))$ are $\underline{\triangle}(N_s)$, where $N_s$ is the one-dimensional irreducible representation of $C_{\nu}(\mathcal{\overline{A}}_{\lambda}(2,\ell))\simeq \mathbb{C}^{2\ell}$ with the action given by $ (a_1,\hdots,a_{2\ell})\cdot w:=a_sw$ for $ (a_1,\hdots,a_{2\ell})\in \mathbb{C}^{2\ell} \mbox{ and } 0\neq w\in N_s$. First, let us consider $i \not\in \{\ell,\ell+1\}$, then
 \begin{flalign*}
&\underline{Res}(\underline{\triangle}(N_i))=\underline{Res}(C_{\nu_0}(\mathcal{\overline{A}}_{\lambda}(2,\ell))/\mathcal{I}^{>0,\nu}\otimes_{\mathbb{C}^{2\ell}}N_i)=\\
=&\underline{Res}((\mathcal{D}^{\lambda}(\mathbb{CP}^{1})\oplus \mathcal{D}(\mathbb{C}^{2})^{\oplus 2\ell-2})/\mathcal{I}^{>0,\nu}\otimes_{\mathbb{C}^{2\ell}}N_i)=\underline{Res}(\mathcal{D}(\mathbb{C}^{2}_s)/\mathcal{\tilde{I}}^{>0,\nu})=\\
=&\underline{Res}(\mathbb{C}[x_i,y_i])\xrightarrow[\varphi]{\thicksim} M_{\alpha_k}\oplus M_{\beta_k},
\end{flalign*}
where the map $\varphi$ is the evaluation at points $(1,0)$ and $(-1,0)\in \mathbb{C}^{2}_s$, the two points on the $s^{\mbox{th}}$ copy of $\mathbb{C}^2$ which are the $\nu_0(\mathbb{C}^*)$-fixed points with indices $\alpha_k$ and $\beta_k$ on the slice (see Remark \ref{SmTFP} for details). Here $\mathcal{D}(\mathbb{C}^{2}_s)$ stands for the algebra of differential operators on the $s$th copy of $\mathbb{C}^2$, while  $\mathcal{I}^{>0,\nu}:=C_{\nu_0}(\mathcal{\overline{A}}_{\lambda}(2,\ell))C_{\nu_0}(\mathcal{\overline{A}}_{\lambda}(2,\ell))^{>0,\nu},$ $\mathcal{\tilde{I}}^{>0,\nu}=\mathcal{I}^{>0,\nu}\cap \mathcal{D}_i(\mathbb{C}^{2})$ and $M_{\alpha_k}, M_{\beta_k}$ are the one-dimensional irreducibles in $ \mathcal{O}_{\nu}(C_{\nu_0}(\mathcal{\overline{S}}_{\lambda}(2,\ell)))$ with $k$ as given in the statement of the corollary. In case $i \in \{\ell,\ell+1\}$, it is analogous to check that $\underline{Res}(\underline{\triangle}(N_i))=M_{\alpha_{mid}}$. This completes verification of the assertion on the images of standards. 
\par 
Next we verify the assertion on the images of simples. Let $M\in \mathcal{O}_{\nu}(C_{\nu_0}(\mathcal{\overline{A}}_{\lambda}(2,\ell)))$, we write $L_{\nu_0}(M)$ for the maximal quotient of $\triangle_{\nu_0}(M)$
that does not intersect the highest weight subspace. Analogously to Corollary $6.8$ in \cite{Los17_1},
one checks that $Res(L_{\nu_0}(M)) = L_{\nu_0}(\underline{Res}(M))$. Therefore, if  $\lambda \not\in \mathbb{Z}_{<0}+\frac{1}{2}-\ell\cup \mathbb{Z}_{>0}+\ell-\frac{3}{2}$, then each $L_{\nu_0}(\underline{\triangle}(N_i))$ is irreducible, so $L_{\nu}(\underline{\triangle}(N_i))=L_{\nu_0}(\underline{\triangle}(N_i))$ and we have $Res(S_i)=Res(L_{\nu}(\underline{\triangle}(N_i)))=Res(L_{\nu_0}(\underline{\triangle}(N_i)))=L_{\nu_0}(M_{\alpha_k}\oplus M_{\beta_k})=S_{\alpha_k}\oplus S_{\beta_k}$ if $i \not\in \{\ell,\ell+1\}$ and $Res(S_{\ell})=Res(S_{\ell+1})=S_{\alpha_{mid}}$.
\par
Notice, that in case $\lambda \in \mathbb{Z}_{<0}+\frac{1}{2}-\ell\cup \mathbb{Z}_{>0}+\ell-\frac{3}{2}$, we have $\underline{\triangle}(N_{\ell+1})\subset\underline{\triangle}(N_{\ell})$ (see Remark \ref{SliceQuantAlg}) and, hence, $S_{\ell}\neq L_{\nu_0}(\underline{\triangle}(N_{\ell}))$, instead, $S_{\ell}=L_{\nu_0}(\underline{\triangle}(N_{\ell})/\underline{\triangle}(N_{\ell+1}))$, while $\underline{Res}(\underline{\triangle}(N_{\ell})/\underline{\triangle}(N_{\ell+1}))$ is already equal to zero.
\end{proof}
\begin{cor}
\label{SoclePreservation}
The restriction functor $Res$ maps socles of standard objects to socles of their images.
\end{cor}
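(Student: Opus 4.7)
The plan is to show, for any standard object $\triangle$ in $\mathcal{O}_{\nu}(\mathcal{\overline{A}}_{\lambda}(2,\ell))$, that $Res$ carries $\operatorname{soc}(\triangle)$ onto $\operatorname{soc}(Res(\triangle))$. I would split the argument into two inclusions and handle them by rather different techniques.

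For the easy containment $Res(\operatorname{soc}(\triangle)) \subseteq \operatorname{soc}(Res(\triangle))$, the argument is essentially formal. Since $Res$ is exact, applying it to the semisimple submodule $\operatorname{soc}(\triangle) \hookrightarrow \triangle$ yields a submodule of $Res(\triangle)$. By Corollary \ref{ResIm}, $Res$ sends each simple object of $\mathcal{O}_{\nu}(\mathcal{\overline{A}}_{\lambda}(2,\ell))$ to a semisimple object (either a single simple, a direct sum of two simples, or zero), hence $Res$ takes semisimples to semisimples, and the image $Res(\operatorname{soc}(\triangle))$ is a semisimple submodule of $Res(\triangle)$, so is contained in the socle.

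For the reverse inclusion I would exploit the factorization of standardization through the intermediate one-parameter subgroup $\nu_0$. Since $\nu_0 \prec^{\lambda} \nu$ (Remark \ref{Nu0precedsNu}), Lemma \ref{ParabInd} gives $\triangle_{\nu} = \triangle_{\nu_0}\circ\underline{\triangle}$, and the compatibility \eqref{ResFunctor} furnishes the natural isomorphism $Res\circ\triangle_{\nu_0} \cong \triangle_{\nu_0}\circ\underline{Res}$. Thus every standard $\triangle$ in $\mathcal{O}_{\nu}(\mathcal{\overline{A}}_{\lambda}(2,\ell))$ is of the form $\triangle_{\nu_0}(\underline{\triangle}(N))$ for a simple $C_{\nu}(\mathcal{\overline{A}}_{\lambda}(2,\ell))$-module $N$, and $Res(\triangle) = \triangle_{\nu_0}(\underline{Res}(\underline{\triangle}(N)))$. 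The algebra $C_{\nu_0}(\mathcal{\overline{A}}_{\lambda}(2,\ell)) \cong \mathcal{D}^{\lambda}(\mathbb{CP}^{1}) \oplus \mathcal{D}(\mathbb{C}^{2})^{\oplus 2\ell-2}$ from Remark \ref{SliceQuantAlg} is explicit enough that the socle of $\underline{\triangle}(N)$ and of its $\underline{Res}$-image can be computed directly, and exactness of $\triangle_{\nu_0}$ transports these computations to the required equality. Matching against the explicit hypertoric socles from Proposition \ref{SignVectors}(c) exhibits, for each simple summand $S_{\gamma}$ of $\operatorname{soc}(Res(\triangle))$, a simple $S \hookrightarrow \triangle$ with $S_{\gamma}$ appearing inside $Res(S)$, which together with the first inclusion gives the equality.

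The main obstacle will be the singular-parameter case of Corollary \ref{ResIm} (when $\lambda \in \mathbb{Z}_{<0}+\frac{1}{2}-\ell \cup \mathbb{Z}_{>0}+\ell-\frac{3}{2}$), in which $Res(S_{\ell}) = 0$ so that $Res$ fails to be faithful on the individual simple $S_{\ell}$. One must verify that the vanishing of $Res(S_{\ell})$ does not interfere with the socle bookkeeping: since $S_{\ell}$ appears as the head of $\triangle_{\ell}$ (and not as a socle constituent), the missing simple never contributes to a potential gap in $\operatorname{soc}(Res(\triangle_{\ell}))$, and the matching of socle simples still goes through. This boundary case aside, the argument is a direct transfer via the commuting square of standardization and restriction.
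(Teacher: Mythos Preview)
Your easy inclusion is exactly the paper's first step, and your final paragraph on the half-integer case correctly isolates the only delicate point. The overall strategy and the ingredients you cite are the same as the paper's.

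There is one imprecise step in your reverse-inclusion argument. You propose to compute socles of $\underline{\triangle}(N)$ and of $\underline{Res}(\underline{\triangle}(N))$ at the $C_{\nu_0}$-level, and then say that ``exactness of $\triangle_{\nu_0}$ transports these computations to the required equality.'' Exactness alone does not do this: an exact functor need not send socles to socles, so knowing $\operatorname{soc}(\underline{\triangle}(N))$ tells you nothing directly about $\operatorname{soc}(\triangle_{\nu_0}(\underline{\triangle}(N)))$. The factorization through $\nu_0$ is already fully encoded in Corollary~\ref{ResIm}, and re-invoking it at this stage buys you nothing.

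The paper's argument for the reverse inclusion is more direct and avoids this issue. Having computed $\operatorname{soc}(Res(\triangle_k))$ explicitly from Proposition~\ref{SignVectors}(c), one observes that it coincides with $Res(S)$ for a \emph{single} specific simple $S$ (read off from Corollary~\ref{ResIm}). Now use the easy inclusion the other way: any simple $S_j\subseteq\operatorname{soc}(\triangle_k)$ must satisfy $Res(S_j)\subseteq\operatorname{soc}(Res(\triangle_k))$, and since the images $Res(S_j)$ always come as an $\alpha$--$\beta$ pair (or $S_{\alpha_{mid}}$, or zero), the only $S_j$ whose image fits inside $\operatorname{soc}(Res(\triangle_k))$ is $S$ itself (or, in the half-integer case, $S_\ell$ with vanishing image). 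Since the socle is nonzero, it contains $S$, and hence $Res(\operatorname{soc}(\triangle_k))\supseteq Res(S)=\operatorname{soc}(Res(\triangle_k))$. Your sentence ``matching against Proposition~\ref{SignVectors}(c) exhibits, for each simple summand $S_\gamma$, a simple $S\hookrightarrow\triangle$\ldots'' is essentially this argument, so your proof is recoverable; but the route through the $C_{\nu_0}$-level socle computation and ``transport by exactness'' should be dropped.
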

\begin{proof}
We start by noticing that Corollary \ref{ResIm} implies that $Res$ maps simple objects to semisimple, hence, the containment $Res(Soc(\triangle_{\nu_i}))\subseteq Soc(Res(\triangle_{\nu_i}))$ follows.	
The reverse inclusion is a consequence of part $(c)$, Proposition \ref{SignVectors}. Namely, it provides an explicit description of socles of standards in the target category, i.e.

 \begin{flalign*}
 &Soc(\triangle_{\alpha_k})=S_{\alpha_{k+1}}, Soc(\triangle_{\beta_k})=S_{\beta_{k+1}} \mbox{ for } \ell<k<2\ell-2, \\
  &Soc(\triangle_{\alpha_{mid}})=S_{\alpha_{\ell+1}}\oplus S_{\beta_{\ell+1}},\\
 &Soc(\triangle_{\alpha_k})=S_{\beta_{2\ell-i}}, Soc(\triangle_{\beta_k})=S_{\alpha_{2\ell-i}} \mbox{ for } 1\leq k<\ell.
 \end{flalign*}
\par
Combining the above with the statement of Corollary \ref{ResIm}, allows to conclude
 \begin{flalign*}
&0\subsetneq Res(Soc(\triangle_{k}))\subseteq Res(S_{k+1})=S_{\alpha_{k+1}}\oplus S_{\beta_{k+1}} \mbox{ for } \ell<k\leq 2\ell, \\
&0\subsetneq Res(Soc(\triangle_{\ell}))=Res(Soc(\triangle_{\ell+1}))\subseteq Res(S_{\ell+2})=S_{\alpha_{\ell+1}}\oplus S_{\beta_{\ell+1}},\\
&0\subsetneq Res(Soc(\triangle_k))\subseteq Res(S_{k+1})=S_{\beta_{2\ell-k}}\oplus S_{\alpha_{2\ell-k}} \mbox{ for } 1\leq k<\ell,
\end{flalign*}
so the nonstrict containments in every row must be equalities and the result follows.
\end{proof}
\begin{cor}
	\label{SocOfStand}
	The socles of standards in $\mathcal{O}_{\nu}(\mathcal{\overline{A}}_{\lambda}(2,\ell))$ are as follows:
	\begin{enumerate}
		\item if $\lambda\in \mathbb{Z}_{<0}+1-\ell\cup \mathbb{Z}_{>0}+\ell-2$
		\begin{flalign*}
		&Soc(\triangle_{k})=S_{k+1} \mbox{ for } \ell+1<k<2\ell, \\
		&Soc(\triangle_{\ell+1})=Soc(\triangle_{\ell})=S_{\ell+2},\\
		&Soc(\triangle_{k})=S_{2\ell-k+1}, \mbox{ for } 1\leq k<\ell.
		\end{flalign*}
		\item  if $\lambda \in \mathbb{Z}_{<0}+\frac{1}{2}-\ell\cup \mathbb{Z}_{>0}+\ell-\frac{3}{2}$
		\begin{flalign*}
		&Soc(\triangle_{k})=\triangle_{k} \mbox{ for } \ell<k\leq 2\ell, \\
		&Soc(\triangle_{k})=S_{2\ell-k+1}, \mbox{ for } 1\leq k\leq \ell.
		\end{flalign*}
		\item otherwise, if $\lambda\in (-\infty;1-\ell)\cup (\ell-2;+\infty)$ is neither an integer nor a half-integer
		\begin{flalign*}
		&Soc(\triangle_{k})=\triangle_{k} \mbox{ for } 1\leq k\leq 2\ell. \\
		\end{flalign*}
	\end{enumerate}
	
\end{cor}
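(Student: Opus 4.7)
The plan is to apply the restriction functor $Res$ to the inclusion $Soc(\triangle_k)\hookrightarrow\triangle_k$ and read off the socle from its image. By Corollary \ref{SoclePreservation}, $Res(Soc(\triangle_k)) = Soc(Res(\triangle_k))$. Corollary \ref{ResIm} provides explicit formulas for $Res(\triangle_k)$ and $Res(S_j)$, while Propositions \ref{SignVectors} and \ref{SignVectors2} describe the socle structure of standards in the slice category. The goal in every case is to compute $Soc(Res(\triangle_k))$ in the slice, identify it as $Res(S_j)$ for a unique simple $S_j$ in the source, and conclude $Soc(\triangle_k)=S_j$.

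In case (1), Proposition \ref{SignVectors}(c) makes the matching direct. For $\ell+1<k<2\ell$, $Res(\triangle_k)=\triangle_{\alpha_k}\oplus\triangle_{\beta_k}$ has socle $S_{\alpha_{k+1}}\oplus S_{\beta_{k+1}}$, which by Corollary \ref{ResIm} equals $Res(S_{k+1})$, giving $Soc(\triangle_k)=S_{k+1}$. For $k\in\{\ell,\ell+1\}$ both standards map to $\triangle_{\alpha_{mid}}$, whose socle $S_{\alpha_{\ell+1}}\oplus S_{\beta_{\ell+1}}$ pulls back to $S_{\ell+2}$, giving $Soc(\triangle_k)=S_{\ell+2}$. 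For $1\le k<\ell$ the length-four socle filtration of $\triangle_{\alpha_k}\oplus\triangle_{\beta_k}$ from Proposition \ref{SignVectors}(c) matches $Res(S_{2\ell-k+1})$, giving $Soc(\triangle_k)=S_{2\ell-k+1}$.

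For case (3), Proposition \ref{SignVectors2} shows the slice category is semisimple, so $Soc(Res(\triangle_k))=Res(\triangle_k)$ for every $k$; combined with exactness of $Res$ and the fact that for generic $\lambda$ no simple in the source has zero restriction, this forces $\triangle_k/Soc(\triangle_k)$ to have vanishing restriction and hence to vanish, so $\triangle_k$ is simple. Case (2) is intermediate: Proposition \ref{SignVectors2} decomposes the target into singleton blocks and blocks equivalent to the principal block $\mathcal{O}_0$ of the BGG category $\mathcal{O}$ for $\mathfrak{sl}_2$, in each of which one standard is simple and the other has the former as socle. Tracing images via Corollary \ref{ResIm} (with the adjustment $Res(S_\ell)=0$) and using the highest weight structure identifies $\triangle_k$ for $\ell<k\le 2\ell$ as the simple standard of its block (or singleton) and $\triangle_k$ for $1\le k\le\ell$ as the length-two Verma whose socle lifts uniquely to $S_{2\ell-k+1}$.

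The main obstacle is that $Res$ is not faithful on simples: in case (1) both $S_\ell$ and $S_{\ell+1}$ map to $S_{\alpha_{mid}}$, and in case (2) $S_\ell$ is killed outright. The highest weight structure from Proposition \ref{AbLoc}(4) resolves these ambiguities: $S_k$ is the head of $\triangle_k$, so whenever $\triangle_k$ is not itself simple its socle cannot involve $S_k$, and among the few simples matching $Soc(Res(\triangle_k))$ via Corollary \ref{ResIm} exactly one remains as the candidate socle. This disambiguation, rather than any single hard calculation, is the step requiring the most care.
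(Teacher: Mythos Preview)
Your proposal is correct and follows essentially the same approach as the paper: the corollary is stated there without its own proof, being an immediate consequence of Corollary~\ref{SoclePreservation} (which pins down $Res(Soc(\triangle_k))=Soc(Res(\triangle_k))$) together with Corollary~\ref{ResIm} and Propositions~\ref{SignVectors}, \ref{SignVectors2}. Your added discussion of the disambiguation step---ruling out $S_\ell$ via the highest weight structure when $Res$ collapses $S_\ell,S_{\ell+1}$ or kills $S_\ell$---is the one point the paper leaves entirely implicit, and you handle it correctly.
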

\begin{rmk}
	\label{SocInclusion}
	Since $Res$ maps simple objects to semisimple, the containment $Res(Soc(M))\subseteq Soc(Res(M))$ is true for any $M \in \mathcal{O}_{\nu}(C_{\nu_0}(\mathcal{\overline{A}}_{\lambda}(2,\ell)))$. 
\end{rmk}

\begin{prop}
\label{SuppDim}
 Let $\lambda\in \mathbb{Z}_{<0}+1-\ell\cup \mathbb{Z}_{>0}+\ell-2$, then the support of the simple module $S_{1}$  has dimension $2\ell$, supports of simple modules $S_2,\hdots, S_{\ell+1}$ have dimension $4\ell-3$, supports of simple modules $S_{\ell+2},\hdots, S_{2\ell}$ are of dimension $4\ell-2$.

\end{prop}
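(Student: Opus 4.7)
\textbf{Proof plan for Proposition \ref{SuppDim}.}

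The plan is to reduce the support computation to the slice $\mathcal{SL}_p$ via the restriction functor $Res$ of Section 6.2. By Corollary \ref{ResIm}, $Res(S_i)\neq 0$ for every $i\in\{1,\ldots,2\ell\}$, so the point $p$ (lying on the $2\ell$-dimensional leaf $\mathcal{L}_3$ of Table \ref{SliceTable}) belongs to each $\text{Supp}(S_i)$, and consequently the closure $\overline{\mathcal{L}_3}$ is contained in $\text{Supp}(S_i)$. This already gives the lower bound $\dim\text{Supp}(S_i)\geq 2\ell$.

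The central step is the dimension identity
\begin{equation*}
\dim\text{Supp}(S_i)\;=\;\dim\mathcal{L}_3+\dim_{\mathcal{SL}_p}\!\text{Supp}(Res(S_i))\;=\;2\ell+\dim\text{Supp}(Res(S_i)),
\end{equation*}
which follows from the formal product decomposition $\overline{\mathcal{M}}(2,\ell)^{\wedge}_p\simeq \mathcal{L}_3^{\wedge}\,\hat{\times}\,\mathcal{SL}_{p}^{\wedge}$ underlying the construction of $Res$ (cf.\ \cite{BezrLos}, Section 5.4). Indeed, in this formal neighborhood the annihilator and hence the support of the module factorises, and the component of $\text{Supp}(S_i)$ through $p$ splits as $\mathcal{L}_3^{\wedge}\times\text{Supp}(Res(S_i))^{\wedge}$; irreducibility of the supports of simple modules in category $\mathcal{O}$ then upgrades the lower bound on dimension to equality. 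It therefore suffices to compute the three possible values of $\dim\text{Supp}(Res(S_i))$ produced by Corollary \ref{ResIm}.

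These three values are read off from the description of hypertoric simples in Section 4 via the polarized arrangement of Proposition \ref{SignVectors}: the extremal simples occurring in $Res(S_1)$ are supported at the unique contracting fixed point of $\mathcal{SL}_p$ and are therefore finite dimensional, contributing $0$; the simples $S_{\alpha_{mid}}$ (image of $S_\ell,S_{\ell+1}$) and $S_{\alpha_{i+1}}\oplus S_{\beta_{i+1}}$ for $2\leq i\leq \ell-1$ have $(2\ell-3)$-dimensional support, being supported on proper subvarieties of a Lagrangian component of the core; and the simples $S_{\alpha_i}\oplus S_{\beta_i}$ appearing for $\ell+2\leq i\leq 2\ell$ have support equal to a full Lagrangian component of the core, of dimension $2\ell-2$. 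Substituting into the displayed identity gives $2\ell$, $4\ell-3$ and $4\ell-2$, respectively.

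The main obstacle is the support computation in the hypertoric variety $\mathcal{SL}_p$. This requires identifying, for each sign vector $\gamma\in\mathcal{P}_{\Lambda,\xi}$, the union of $T$-orbit closures in $\mathcal{SL}_p$ swept out by the chambers lying "below $\gamma$" in the highest-weight order from Remark \ref{highestWeight}, and computing its dimension. The dichotomy between dimensions $2\ell-3$ and $2\ell-2$ reflects whether $\gamma$ sits in the interior of the middle strip of the arrangement (giving the smaller support, since several bounding hyperplanes cut down the relevant chamber) or at its outer rim (giving the full Lagrangian component); the extremal cases $\alpha_1,\beta_1$ collapse all the way to a point and account for the finite-dimensional simples whose existence is peculiar to the slice—note that by Corollary \ref{LagrnotIsotr} no such finite-dimensional simples exist for $\overline{\mathcal{A}}_\lambda(2,\ell)$ itself, consistent with the strict inequality $\dim\text{Supp}(S_1)=2\ell>0$.
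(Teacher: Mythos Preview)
Your strategy is the same as the paper's: reduce via $Res$ to the slice $\mathcal{SL}_p$, use the dimension identity
\[
\dim\operatorname{Supp}(S_i)=2\ell+\dim\operatorname{Supp}(Res(S_i)),
\]
and then compute the three possible slice support dimensions from the hypertoric description. So the architecture is correct. However, there are two genuine gaps.

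\textbf{First gap: irreducibility is false.} You justify the displayed identity by appealing to ``irreducibility of the supports of simple modules in category $\mathcal{O}$.'' This is not true in general: supports of simple modules over quantizations can certainly be reducible. What you actually need, and what the paper invokes, is \emph{equidimensionality}: by Theorem~1.2 of \cite{LosB}, every irreducible component of $\operatorname{Supp}(S_i)$ has the same dimension (this uses that the arithmetic fundamental groups of the leaves are finite, cf.\ \cite{Nam}). Given equidimensionality, the component through $p$ has the same dimension as every other component, so the local formula coming from the formal product decomposition at $p$ indeed computes $\dim\operatorname{Supp}(S_i)$. Without this input your argument only gives a lower bound, since a priori there could be a larger component of $\operatorname{Supp}(S_i)$ missing $p$ entirely.

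\textbf{Second gap: the hypertoric support computation is hand-waved.} Your description of $\operatorname{Supp}(S_\gamma)$ as ``the union of $T$-orbit closures swept out by chambers lying below $\gamma$'' is not the correct characterization, and the dichotomy you sketch (``interior of the middle strip'' versus ``outer rim'') is a heuristic, not a computation. The paper instead uses Proposition~5.5 of \cite{BLPW1}, which says that $\operatorname{Supp}(S_\alpha)\subset\mathcal{SL}_p$ is cut out by the explicit equations $x_s=0$ or $y_s=0$ (resp.\ $i_k=0$ or $j_k=0$) according to the entries $\alpha(s)$ of the sign vector. From these equations one writes down generators of the invariant ring of each support and reads off the dimension directly: $0$ for the two extremal sign vectors $\alpha_1,\beta_1$; $2\ell-3$ for the sign vectors ending in $--$ with a proper nonempty block of $+$'s; and $2\ell-2$ for those ending in $+-$ or $-+$. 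You should replace your qualitative description with this explicit calculation.
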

\begin{proof}
By Theorem $1.2$ in \cite{LosB} all irreducible components of Supp$(S_i)$ have the same dimension (the arithmetic fundamental groups are finite due to the general result of \cite{Nam}). If $Res(S_i)\neq 0$, there exists an irreducible component of Supp$(S_i)$, containing the point $p$ and, therefore, the symplectic leaf through it. Hence, codim Supp$(S_i)$ in $\overline{\mathcal{M}}(2,\ell)$ is equal to codim Supp$(Res(S_i))$ in $\mathcal{SL}_{p}$.  It remains to compute the dimensions of Supp$(Res(S_{\alpha}))$'s.  It follows from Proposition $5.5$ in \cite{BLPW1} that the variety Supp$(S_{\alpha})$ is determined by the sign vector $\alpha$ corresponding to $S_{\alpha}$. Namely, Supp$(S_{\alpha})$ is cut out in $\mathcal{SL}_p$ by the equations $$x_s=0 \mbox{ if } \alpha_i(s)=- \mbox{ and } y_s=0 \mbox{ if }  \alpha(s)=+ \mbox{ for } s \in \{1,\hdots, 2\ell-2\},$$ $$i_k=0 \mbox{ if } \alpha(k)=- \mbox{ and } j_k=0 \mbox{ if }  \alpha(k)=+ \mbox{ for } k \in \{2\ell-1, 2\ell\}.$$
\par
The sign vectors for simple modules were provided in Proposition \ref{SignVectors}.
\par
If $\alpha=\underset{\ell-1}{\underbrace{-\hdots-\overset{a}{\overbrace{+\hdots+} } }} \underset{\ell-1}{\underbrace{-\hdots---}}-- $ the coordinate ring $\mathbb{C}[\mbox{Supp}(S_{\alpha}))]$ is generated by $u_{ij}:=x_iy_j \mbox{ and } v_{js}:=y_jy_s $ for $i\in \{\ell-a,\hdots, \ell-1\}, j \in \{1,\hdots, \ell-a-1\},  s \in \{\ell,\hdots, 2\ell-2\}$ subject to relations: 
  \begin{flalign*}
 &u_{ij}u_{mn}=u_{mj}u_{in}\\
 &u_{ij}v_{kl}=u_{ik}v_{jl}\\
 &v_{ij}v_{kl}=v_{kj}v_{il}. 
 \end{flalign*}
\par
Therefore, dim Supp$(S_{\alpha})=\ell-a-1+\ell-1+a-1=2\ell-3$. The case $\alpha=\underset{\ell-1}{\underbrace{-\hdots--- }} \underset{\ell-1}{\underbrace{\overset{a}{\overbrace{+\hdots+} }-\hdots-}}--$ is completely analogous.
\par
 If $\alpha=\underset{\ell-1}{\underbrace{\overset{a}{\overbrace{+\hdots+} }-\hdots-}}\underset{\ell-1}{\underbrace{-\hdots---}}+-$,  the coordinate ring $\mathbb{C}[\mbox{Supp}(S_{\alpha})]$ is generated by polynomials in $u_{ij}, v_{kl} w_s=i_1y_sj_2$ with $i,j,s, u_{ij} \mbox{ and } v_{js}$ as above. It is direct to check that dim Supp$(S_{\alpha})=2\ell-2$. The case $\alpha=\underset{\ell-1}{\underbrace{-\hdots---}}\underset{\ell-1}{\underbrace{\overset{a}{\overbrace{+\hdots+} }-\hdots-}}-+$ is analogous.
\par
 Finally, if  $\alpha =\underset{\ell-1}{\underbrace{+\hdots+++ }} \underset{\ell-1}{\underbrace{-\hdots---}}-- \mbox{ or } \underset{\ell-1}{\underbrace{-\hdots--- }} \underset{\ell-1}{\underbrace{+\hdots+++}}--$, then the coordinate ring $\mathbb{C}[\mbox{Supp}(S_{\alpha})]=\mathbb{C}[x_1,\hdots,x_{\ell-1},y_{\ell},\hdots,y_{2\ell-2},j_1,j_2]^{\mathbb{C}^*\times \mathbb{C}^*}=$
 \newline $\mathbb{C}[y_1,\hdots,y_{\ell-1},x_{\ell},\hdots,x_{2\ell-2},,j_1,j_2]^{\mathbb{C}^*\times \mathbb{C}^*}=\mathbb{C}$, so, $Supp(S_{\alpha})$ is a point.  
\end{proof}

\begin{ex}
If $\ell=2$, then dim Supp$(S_1)=4$, dim Supp$(S_2)=$dim Supp$(S_3)=5$ and dim Supp$(S_4)=6$.
\end{ex}

\par
 \par

 \par

 

\subsection{Cross-walling functors and $W$-action}
It was checked in Section $5$ of \cite{BLPW} that the natural functor $\iota_{\nu}:D^b( \mathcal{O}_{\nu}(\mathcal{\overline{A}}_{\lambda}(2,\ell)))\hookrightarrow D^b(Coh(\mathcal{\overline{A}}^{\theta}_{\lambda}(2,\ell)))$ is a full embedding.
As shown in  Proposition $8.7$ in \cite{BLPW}, the functor $\iota_{\nu}$ admits both left and right adjoints to be denoted by $\iota^{!}_{\nu}$ and $\iota^*_{\nu}$ respectively.
\begin{defn}
	Let $\nu, \nu'$ be generic one-parameter subgroups. The \textit{cross-walling functor} is given by $$\mathfrak{CW}_{\nu\rightarrow \nu'}:=\iota^{!}_{\nu'}\circ \iota_{\nu}.$$
	The functor $\mathfrak{CW}_{\nu\rightarrow \nu'}$ has a right adjoint $\mathfrak{CW}^*_{\nu\rightarrow \nu'}$ given by $\iota_{\nu}\circ \iota^*_{\nu'}$.
\end{defn}
\par
We need to recall one more concept prior to formulating the  property of cross-walling functors relevant for the purposes of the exposition. Let $\mathcal{C}_1$, $\mathcal{C}_2$ be two highest weight categories. Consider the full subcategories $\mathcal{C}_1^{\triangle}\subset \mathcal{C}_1$ and  $ \mathcal{C}_2^{\nabla}\subset \mathcal{C}_2$ of standardly and costandardly filtered objects. We say that  $\mathcal{C}_2$ is \textit{Ringel dual} to  $\mathcal{C}_1$ if there exists an equivalence  $\mathcal{C}_1^{\triangle} \xrightarrow{\sim} \mathcal{C}_2^{\nabla}$
of exact categories. This equivalence is known to extend to a derived equivalence $\mathcal{R}: D^b(\mathcal{C}_1) \xrightarrow{\sim} D^b(\mathcal{C}_2)$ to be called a \textit{Ringel duality functor}. 
\par
The following result is obtained via a direct apllication of part $2$ of Proposition $7.4$ in \cite{Los17}.
\begin{prop}
	\label{RingelDuality}
The functor $\mathfrak{CW}_{\nu\rightarrow -\nu}[2-3\ell]$ is a Ringel duality functor that maps $\triangle^{\nu}(p)$ to $\nabla^{-\nu}(p)$ for all $p\in \overline{\mathcal{M}}^{\theta}(2,\ell)^T$.
\end{prop}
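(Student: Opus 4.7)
The plan is to verify the hypotheses of part $2$ of Proposition $7.4$ in \cite{Los17} for our specific setting and then invoke it directly. Losev's proposition applies to the quantization of a conical symplectic resolution equipped with a Hamiltonian torus action having finitely many fixed points, under the assumption that the parameter $\lambda$ is such that abelian localization holds (or lies in the appropriate asymptotic chamber). The key pieces of data that must be matched are (i) the conical symplectic resolution structure on $\bar{\rho}:\overline{\mathcal{M}}^{\theta}(2,\ell)\rightarrow \overline{\mathcal{M}}(2,\ell)$, (ii) the Hamiltonian $T$-action with finitely many fixed points, and (iii) the homological shift, which in Losev's setup equals half the complex dimension of the resolution.

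First I would record the input data. The conical symplectic resolution property was established right after the definition of $\overline{\mathcal{M}}^{\theta}(n,\ell)$, while finiteness of $\overline{\mathcal{M}}^{\theta}(2,\ell)^{T}$ is exactly Theorem \ref{FixedPts}. The abelian localization assumption, required so that $\iota_{\nu}$ and $\iota_{-\nu}$ are full embeddings with the expected adjoints, is guaranteed by Theorem \ref{AbLocHolds} for $\lambda$ in the asymptotic chambers; outside these, one passes through a translation equivalence to an asymptotic $\lambda$, which is the standard reduction used in \cite{Los17}. The shift is computed from $\dim_{\mathbb{C}}\overline{\mathcal{M}}^{\theta}(2,\ell)=6\ell-4$, so half of this is $3\ell-2$, and Losev's formulation of the Ringel duality shift yields $[-(3\ell-2)] = [2-3\ell]$, matching the statement.

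Next I would verify that $\mathfrak{CW}_{\nu\rightarrow -\nu}[2-3\ell]$ sends standards to costandards at the level of objects. By Losev's general machinery (Proposition $7.4(2)$ in \cite{Los17}), once the hypotheses above are in place, the functor $\iota^{!}_{-\nu}\circ \iota_{\nu}[2-3\ell]$ restricts to an exact equivalence $\mathcal{O}_{\nu}^{\triangle}\xrightarrow{\sim}\mathcal{O}_{-\nu}^{\nabla}$ and carries $\triangle^{\nu}(p)$ to $\nabla^{-\nu}(p)$ for each $T$-fixed point $p$. The only subtlety is ensuring that the labeling of fixed points by contraction orders matches under $\nu\mapsto -\nu$; this is automatic since both highest-weight structures on $\mathcal{O}_{\nu}$ and $\mathcal{O}_{-\nu}$ are indexed by the same set $\overline{\mathcal{M}}^{\theta}(2,\ell)^{T}$ via Proposition \ref{AbLoc}(4).

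The main potential obstacle is checking that the generic chamber condition required by \cite{Los17} is met by our choice of $\nu$, since the argument implicitly uses that $\nu$ is generic so that $\overline{\mathcal{M}}^{\theta}(2,\ell)^{\nu(\mathbb{C}^{*})}=\overline{\mathcal{M}}^{\theta}(2,\ell)^{T}$ and all relevant algebras $C_{\nu}(\overline{\mathcal{A}}_{\lambda}(2,\ell))$ reduce to the fixed-point algebra described in Proposition \ref{AbLoc}(3). Once this is in place, the Ringel duality statement and the explicit image of standards are direct consequences of the cited result, with no further calculation needed.
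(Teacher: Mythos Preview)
Your proposal is correct and follows exactly the paper's approach: the paper simply states that the result is obtained via a direct application of part $2$ of Proposition $7.4$ in \cite{Los17}, and you have spelled out the verification of its hypotheses (conical symplectic resolution, finitely many $T$-fixed points via Theorem \ref{FixedPts}, abelian localization via Theorem \ref{AbLocHolds}, and the shift $\frac{1}{2}\dim\overline{\mathcal{M}}^{\theta}(2,\ell)=3\ell-2$). There is nothing more to add.
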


Let $W = N_G(T)/T \subset Sp_{2\ell}(\mathbb{C})$ be the Weyl group. The action of $W$ on $\mathbb{C}[\overline{\mathcal{M}}^{\theta}(n,\ell)]$ lifts to an action on the quantization $\overline{\mathcal{A}}_{\lambda}(2,\ell)$. This gives rise to the functor $\Phi_w: \mathcal{O}_{\nu'}(\overline{\mathcal{A}}_{\lambda}(2,\ell))\rightarrow\mathcal{O}_{\nu}(\overline{\mathcal{A}}_{\lambda}(2,\ell))$, where $w\cdot \nu = \nu'$ (here we consider the action of $W$ via conjugation, i.e. $w \cdot \nu =w\nu w^{-1}$). The functor $\Phi_w$ maps an object $N$ to itself with the twisted action of $\overline{\mathcal{A}}_{\lambda}(2,\ell)$. More
precisely, $$a \cdot n := (wa)n,$$
with the ordinary action of $\overline{\mathcal{A}}_{\lambda}(2,\ell)$ on the r.h.s.
\par
We conclude with an important result concerning the faithfulness of the functor $Res$.
\begin{prop}
	\label{Faithfulness}
	The restriction of the functor $Res$ to $ \mathcal{O}_{\nu}(\mathcal{\overline{A}}_{\lambda}(2,\ell))^{\triangle}$ is faithful.
\end{prop}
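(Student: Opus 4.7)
The plan is to argue by contradiction, leveraging the exactness of $Res$ together with the explicit socle structure recorded in Corollaries \ref{SoclePreservation} and \ref{SocOfStand}, and the non-vanishing of $Res$ on standards from Corollary \ref{ResIm}. Suppose $f\colon M\to N$ is a morphism in $\mathcal{O}_{\nu}(\overline{\mathcal{A}}_{\lambda}(2,\ell))^{\triangle}$ with $Res(f)=0$, and assume for contradiction that $f\neq 0$. Setting $I:=\mathrm{Im}(f)$, exactness of $Res$ forces $Res(I)=0$, so it suffices to show that every nonzero submodule of a standardly filtered object has nonzero image under $Res$.

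First I would choose a simple submodule $S\hookrightarrow I\subset N$, which exists since the category is of finite length. A short induction on the length $k$ of a standard filtration $0=N_{0}\subset N_{1}\subset\cdots\subset N_{k}=N$ shows that $S$ embeds into one of the standards $N_{i}/N_{i-1}=\triangle(p_{i})$ appearing in the filtration: the composition $S\hookrightarrow N\twoheadrightarrow \triangle(p_{k})$ is either zero, in which case $S\subset N_{k-1}$ and the inductive hypothesis applies, or nonzero, hence an embedding as $S$ is simple. Consulting Corollary \ref{SocOfStand}, one sees that in every case the socle of a standard is itself a simple object, so $S\cong \mathrm{Soc}(\triangle(p_{i}))$. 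Applying $Res$ to the inclusion $\mathrm{Soc}(\triangle(p_{i}))\hookrightarrow \triangle(p_{i})$ and invoking the equality $Res(\mathrm{Soc}(\triangle(p_{i})))=\mathrm{Soc}(Res(\triangle(p_{i})))$ from Corollary \ref{SoclePreservation}, one obtains $Res(S)=\mathrm{Soc}(Res(\triangle(p_{i})))$. Since Corollary \ref{ResIm} guarantees $Res(\triangle(p_{i}))\neq 0$ and every nonzero object in the slice category has nonzero socle, this forces $Res(S)\neq 0$. But exactness of $Res$ provides an embedding $Res(S)\hookrightarrow Res(I)=0$, the desired contradiction.

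The main subtlety I anticipate is ensuring that the socle of each $\triangle(p_{i})$ is simple rather than merely semisimple: this simplicity is precisely what lets me identify $Res(S)$ with the full socle of $Res(\triangle(p_{i}))$, rather than with a direct summand that might a priori be annihilated by $Res$. Note that this is a genuine concern because $Res(S_{\ell})=0$ can occur at half-integer $\lambda$ by Corollary \ref{ResIm}, so without simplicity of the socle one could not rule out that every socle summand of a given standard gets killed. The required simplicity is read off case by case from Corollary \ref{SocOfStand}: either directly from the listed formulas $\mathrm{Soc}(\triangle_{k})=S_{j}$, or by observing that a semisimple standard in a highest weight category must coincide with its simple head.
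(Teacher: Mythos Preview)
Your argument is correct and follows essentially the same approach as the paper: both reduce faithfulness to the claim that $Res$ does not annihilate any simple appearing in the socle of a standard, using exactness of $Res$, Corollary~\ref{SoclePreservation}, and the explicit socle description in Corollary~\ref{SocOfStand}. Your version is in fact slightly more careful than the paper's: where the paper asserts that $Res$ preserves socles of arbitrary standardly filtered objects (whereas Corollary~\ref{SoclePreservation} is only stated for standards), you instead embed a chosen simple $S\subset I$ into a single standard via induction on the filtration length, and then invoke Corollary~\ref{SoclePreservation} only for that standard.
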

\begin{proof}
The functor $Res$ is exact (see Section $6.2$ of \cite{Los17}). Since it also preserves socles of the objects in $ \mathcal{O}_{\nu}(\mathcal{\overline{A}}_{\lambda}(2,\ell))^{\triangle}$ (see Corollary \ref{SoclePreservation}), it is sufficient that $Res$ does not kill socles of standard objects to conclude that the functor is faithful (the socle of the image of a nontrivial homomorphism in $ \mathcal{O}_{\nu}(\mathcal{\overline{A}}_{\lambda}(2,\ell))^{\triangle}$ is nonzero). As established in Corollary \ref{ResIm} this is the case for $\lambda \not\in\mathbb{Z}+\frac{1}{2}$, since $Res(S_i)\neq 0$ for all $i$.
\par
  In case $\lambda \in\mathbb{Z}+\frac{1}{2}$, we have $Res(S_{\ell})=0$ (here $S_{\ell}$ is the unique simple annihilated by $Res$ as shown in Corollary \ref{ResIm}), however, $S_{\ell}$ does not lie in the socle of any standard object in $ \mathcal{O}_{\nu}(\mathcal{\overline{A}}_{\lambda}(2,\ell))$ (see Corollary \ref{ResIm}). 
\end{proof}
\subsection{Main theorem}

The results obtained above  allow to describe the $\Hom$ spaces between standards in $\mathcal{O}_{\nu}(\mathcal{\overline{A}}_{\lambda}(2,\ell))^{\triangle}$.
\begin{thm}
	\label{HomsStand}
	Let $\lambda\in \mathbb{Z}_{<0}+1-\ell\cup \mathbb{Z}_{>0}+\ell-2$ and $\nu=(t^{d_1},t^{d_2},\hdots,t^{d_{\ell}})$ with $d_1\gg d_2\gg d_3\gg\hdots\gg d_{\ell}>0$. The nontrivial Homs in $\mathcal{O}_{\nu}(\mathcal{\overline{A}}_{\lambda}(2,\ell))^{\triangle}$ are 
	
	\begin{enumerate}
		\item{}
		Hom($\triangle_i,\triangle_{i-1}$), where $i \in \{2,\hdots,2\ell\}, i\neq \ell+1$; 
		
		\item{}
		Hom($\triangle_{\ell+2},\triangle_{\ell}$), Hom($\triangle_{\ell+1},\triangle_{\ell-1}$);
		
		\item{}
		Hom($\triangle_{2\ell-i},\triangle_{i+1}$) with $i \in \{0,\hdots,\ell-2\}$.
	\end{enumerate}
	Let $\lambda\in \mathbb{Z}_{<0}+\frac{1}{2}-\ell\cup \mathbb{Z}_{>0}+\ell-\frac{3}{2}$. The nontrivial Homs between standards in $\mathcal{O}_{\nu}(\mathcal{\overline{A}}_{\lambda}(2,\ell))^{\triangle}$ are Hom($\triangle_{2\ell-i},\triangle_{i+1}$) with $i \in \{0,\hdots,\ell-1\}$. 
	
	
	All the Hom spaces are one-dimensional. 
	\par
	Finally, if $\lambda\in (-\infty;1-\ell)\cup (\ell-2;+\infty)$ is none of the above, the category $\mathcal{O}_{\nu}(\mathcal{\overline{A}}_{\lambda}(2,\ell))$ is semisimple.
\end{thm}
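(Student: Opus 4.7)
The plan is to exploit the exact faithful restriction functor $Res:\mathcal{O}_{\nu}(\mathcal{\overline{A}}_{\lambda}(2,\ell))^{\triangle}\rightarrow\mathcal{O}_{\nu}(\mathcal{\overline{S}}_{\lambda}(2,\ell))$ (Proposition \ref{Faithfulness}) to transfer the complete description of Homs between standards in the hypertoric category (Proposition \ref{SignVectors}(d)) to the target category. Combined with Corollary \ref{ResIm}, which identifies each $Res(\triangle_{i})$ as a concrete direct sum of hypertoric standards, faithfulness gives an injection
\[
\mathrm{Hom}(\triangle_{i},\triangle_{j})\hookrightarrow\mathrm{Hom}(Res(\triangle_{i}),Res(\triangle_{j})).
\]
Evaluating the right-hand side case by case via Proposition \ref{SignVectors}(c)-(d) produces the list of candidate nonzero Homs exactly matching items (1)--(3) of the theorem in the integer case and only the ``palindrome'' Homs in the half-integer case. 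In the generic case, where the hypertoric category is already semisimple by Proposition \ref{SignVectors2}, the injection forces $\mathrm{Hom}(\triangle_{i},\triangle_{j})=0$ for $i\neq j$, so standards are simple and the category is semisimple.

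Turning to lower bounds, I would construct explicit nonzero morphisms. Corollary \ref{SocOfStand} describes the socles $\mathrm{Soc}(\triangle_{j})$, and since $\triangle_{i}$ is the projective cover of $S_{i}$ in the Serre subcategory of objects with composition factors $S_{\mu}$, $\mu\leq i$ (contraction order), every occurrence of $S_{i}$ in $\triangle_{j}$ lying below all composition factors $S_{\mu}$ with $\mu\not\leq i$ gives a nonzero morphism $\triangle_{i}\rightarrow\triangle_{j}$. The socle description in Corollary \ref{SocOfStand} immediately supplies the ``palindrome'' Homs $\triangle_{2\ell-i}\rightarrow\triangle_{i+1}$ and the ``short'' Homs $\triangle_{i}\rightarrow\triangle_{i-1}$. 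For the ``medium-length'' Homs $\mathrm{Hom}(\triangle_{\ell+2},\triangle_{\ell})$ and $\mathrm{Hom}(\triangle_{\ell+1},\triangle_{\ell-1})$, which involve the central standards, I would use that $Res(\triangle_{\ell})=Res(\triangle_{\ell+1})=\triangle_{\alpha_{mid}}$ together with Corollary \ref{SoclePreservation} (preservation of socles by $Res$) to identify the nonzero image.

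The tightness of the upper bound is the delicate part: for adjacent pairs $i,i-1$ in the same ``half'' the naive bound from $Res$ is $2$-dimensional (since both $\mathrm{Hom}(\triangle_{\alpha_i},\triangle_{\alpha_{i-1}})$ and $\mathrm{Hom}(\triangle_{\beta_i},\triangle_{\beta_{i-1}})$ are one-dimensional), whereas the theorem claims only $1$. To collapse this factor of $2$ I would invoke parabolic induction: by Proposition \ref{ParabIndNu} and Remark \ref{Nu0precedsNu}, we have $\nu_{0}\prec^{\lambda}\nu$ for $\nu_{0}=(t^{d_{1}},\ldots,t^{d_{\ell-1}},1)$, and Lemma \ref{ParabInd} gives $\triangle_{\nu}=\triangle_{\nu_{0}}\circ\underline{\triangle}$. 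Via the decomposition $C_{\nu_{0}}(\overline{\mathcal{A}}_{\lambda}(2,\ell))\simeq\mathcal{D}^{\lambda}(\mathbb{CP}^{1})\oplus\mathcal{D}(\mathbb{C}^{2})^{\oplus 2\ell-2}$ from Remark \ref{SliceQuantAlg}, each standard $\triangle_{i}$ is the image under $\triangle_{\nu_{0}}$ of a one-dimensional module over a single summand, and maps between such lifts are controlled by the well-understood Hom structure of the classical $\mathfrak{sl}_{2}$-category $\mathcal{O}$ (for the $\mathcal{D}^{\lambda}(\mathbb{CP}^{1})$ factor) and of category $\mathcal{O}$ for $\mathcal{D}(\mathbb{C}^{2})$ (trivial between distinct fixed points).

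The main obstacle I anticipate is this tightness argument: one has to verify that a morphism $\triangle_{i}\to\triangle_{j}$ cannot land in a ``diagonal'' complement of the symmetric subspace identified by the parabolic induction description. Concretely this amounts to a compatibility check between the formal-neighborhood isomorphism $\Theta$ used to define $Res$ and the standardization functor $\triangle_{\nu_{0}}$; the symmetry swapping the $\alpha$- and $\beta$-blocks of the hypertoric side is mirrored on the $\overline{\mathcal{A}}_{\lambda}(2,\ell)$-side by the $W$-action through the functors $\Phi_{w}$ discussed before Proposition \ref{Faithfulness}, and one must verify that any nonzero lift of a Hom between standards is $\mathbb{Z}/2$-invariant under this symmetry, cutting the two-dimensional target Hom down to the required one-dimensional subspace. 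Once this is done, the multiplicity statement claimed in Corollary \ref{MultiplicityCor} follows from the socle filtrations together with BGG reciprocity in the highest weight category $\mathcal{O}_{\nu}(\mathcal{\overline{A}}_{\lambda}(2,\ell))$.
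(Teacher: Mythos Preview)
Your overall strategy---upper bounds via faithfulness of $Res$, lower bounds via socles---matches the paper's, but there is a genuine gap in the existence argument and missing ingredients in both the ``tightness'' and the semisimplicity steps.

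For the integer case, your claim that Corollary~\ref{SocOfStand} ``immediately supplies the short Homs $\triangle_i\to\triangle_{i-1}$'' fails for $2\leq i\leq\ell$: there $\mathrm{Soc}(\triangle_{i-1})=S_{2\ell-i+2}$, not $S_i$, so the socle gives no map from $\triangle_i$, and your projective-cover-in-Serre-subcategory remark does not produce one either (it yields maps \emph{to} quotients of $\triangle_{i-1}$, not into $\triangle_{i-1}$). The paper closes this gap with two tools you do not invoke. First, it builds the autoequivalence $\mathcal{F}_{w_0}=\Phi_{w_0}\circ\mathfrak{CW}_{\nu\to-\nu}$ of $\mathcal{O}_\nu^{\triangle}$ (Proposition~\ref{RingelDuality}), which sends $\triangle_i\mapsto\triangle_{2\ell-i+1}$; this symmetry transports the large-$i$ existence (which \emph{does} follow from socles) to the small-$i$ side and is also used for some vanishings. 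Second, the paper runs an induction on the number of loops: the component $\overline{\mathcal{M}}^\theta(2,\ell)\subset\overline{\mathcal{M}}^\theta(2,\ell+1)^{\tilde\nu}$ for $\tilde\nu=(t^d,1,\ldots,1)$ (Theorem~\ref{AllFP}), together with $\tilde\nu\prec^\lambda\nu$ (Proposition~\ref{ParabIndNu}) and Lemma~\ref{ParabInd}, pushes all interior Homs from $\ell$ loops to $\ell+1$; the socle and cross-walling steps then supply the two boundary Homs involving $\triangle_1$ and $\triangle_{2\ell+2}$. Your parabolic induction is through $\nu_0=(t^{d_1},\ldots,t^{d_{\ell-1}},1)$, landing on $\mathcal{D}^\lambda(\mathbb{CP}^1)\oplus\mathcal{D}(\mathbb{C}^2)^{\oplus 2\ell-2}$, which carries no inductive structure on $\ell$.

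Your $\mathbb{Z}/2$-invariance argument for one-dimensionality is speculative: there is no a priori reason the image of $\mathrm{Hom}(\triangle_i,\triangle_{i-1})$ under $Res$ should land in the symmetric line of $\mathrm{Hom}(\triangle_{\alpha_i}\oplus\triangle_{\beta_i},\triangle_{\alpha_{i-1}}\oplus\triangle_{\beta_{i-1}})$. The paper does not argue this way; once existence and vanishing are in place, one-dimensionality comes from the simplicity of socles together with the multiplicity-one information obtained exactly as in the proof of Corollary~\ref{MultiplicityCor} (that proof uses only $Res$, Proposition~\ref{SignVectors}(c) and Remark~\ref{SocInclusion}, and is logically independent of the theorem). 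Finally, for generic $\lambda$, ``all standards simple'' does not by itself force semisimplicity of a highest weight category; the paper uses abelian localisation (Theorem~\ref{AbLocHolds}) to get $[\triangle_i]=[\nabla_i]$ in $K_0$ and then appeals to Lemma~4.2 of \cite{Los17_1}.
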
 
\begin{proof}
First consider $\lambda\in \mathbb{Z}_{<0}+1-\ell \cup \mathbb{Z}_{>0}+\ell-2$. For convenience of the exposition the proof will be broken down into several steps.
\par
\textit{Step} $1$.	Notice that $Soc(\triangle_{2\ell-1})=Soc(\triangle_{2})=Soc(\triangle_{1})=\triangle_{2\ell}$ (Corollary \ref{SocOfStand}). Hence, Hom$(\triangle_{2\ell},\triangle_{2\ell-1})$, Hom$(\triangle_{2\ell},\triangle_{2})$ and Hom$(\triangle_{2\ell},\triangle_{1})$ do not vanish. 

\par
\textit{Step} $2$. Let $w_0\in W$ be the longest element and consider the functor $\mathcal{F}_{w_0}:=\Phi_{w_0}\circ \mathfrak{CW}_{\nu\rightarrow -\nu}$. Notice that $w_0\cdot\nu=-\nu$ and the order on the $T$-fixed points corresponding to $-\nu$ is in reverse to the one associated with $\nu$. Thus the functor $\mathcal{F}_{w_0}$ is an autoequivalence on $\mathcal{O}_{\nu}(\mathcal{\overline{A}}_{\lambda}(2,\ell))^{\triangle}$ with $\mathcal{F}_{w_0}(\triangle_i)=\triangle_{2\ell-i+1}$ (see Proposition \ref{RingelDuality}). Hence, we see that Hom$(\triangle_{2\ell-1},\triangle_{1})=$ Hom$(\triangle_{2\ell},\triangle_{2})=0$ for $\ell\geq 3$ since Soc$(\triangle_{2})=S_{2\ell-1}$ does not contain $\triangle_{2\ell}$ (see Corollary \ref{SocOfStand}). On the other hand if $\ell=2$, then Soc$(\triangle_{2})=\triangle_4$, so Hom$(\triangle_{3},\triangle_{1})$ does not vanish. Similarly,  one shows that Hom$(\triangle_{2},\triangle_{1})=$ Hom$(\triangle_{2\ell},\triangle_{2\ell-1})$ does not vanish either.

\par
\textit{Step} $3$.
We complete the proof for integral $\lambda$ arguing by induction on the number of loops $\ell$ with $\ell=2$ being the base. Assume the assertion holds for the variety $\overline{\mathcal{M}}^{\theta}(2,\ell)$  and take $\tilde{\nu}=(t^d,1,\hdots,1)$ with $d>0$. Notice that $\overline{\mathcal{M}}^{\theta}(2,\ell)\subset\overline{\mathcal{M}}^{\theta}(2,\ell+1)^{\tilde{\nu}}$ as a component. Since $\tilde{\nu}\prec^{\lambda} \nu$ (see Proposition \ref{ParabIndNu}), Lemma \ref{ParabInd} combined with the assumption that induction hypothesis holds in case of $\ell$ loops assure the existense of required homomorphisms between $\triangle_i$'s with indices $i\in\{2,\hdots,2\ell\}$  in $\mathcal{O}_{\nu}(\mathcal{\overline{A}}_{\lambda}(2,\ell+1))$. The remaining Homs between standard objects (not appearing in \textit{Steps} $1,2$ above) vanish, since so do the Homs between their images in the category  $\mathcal{O}_{\nu}(\mathcal{\overline{S}}_{\lambda}(2,\ell))$ and the functor $Res$ is faithful (see Proposition \ref{Faithfulness}). 
\par 
\textit{Step} $4$. 
In case $\lambda \in \mathbb{Z}_{<0}+\frac{1}{2}-\ell\cup \mathbb{Z}_{>0}+\ell-\frac{3}{2}$ using that Soc($\triangle_{i+1}$) with $i \in \{0,\hdots,\ell-1\}$ is $S_{2\ell-i}$ (see Corollary \ref{SocOfStand}), we establish the nonvanishing of Hom spaces in the statement of the theorem.  
Again the remaining Homs vanish since so do their images in the category  $\mathcal{O}_{\nu}(\mathcal{\overline{S}}_{\lambda}(2,\ell))$ and the functor $Res$ is faithful on standardly filtered objects (see Proposition \ref{Faithfulness}).
\par
\textit{Step} $5$. 
Finally, if $\lambda\in (-\infty;1-\ell)\cup (\ell-2;+\infty)$ is neither an integer nor a half-integer, Corollary \ref{SocOfStand} asserts that all standards $\triangle_i$ in $\mathcal{O}_{\nu}(\mathcal{\overline{A}}_{\lambda}(2,\ell))$ are irreducible. Since for $\lambda$ as above abelian localisation holds (Theorem \ref{AbLocHolds}), the classes of standard and costandard objects in $K_0(\mathcal{O}_{\nu}(\mathcal{\overline{A}}_{\lambda}(2,\ell)))$ coincide (Corollary $6.4$ in \cite{BLPW}), so we have that $\nabla_i$'s are simple as well. In particular, every simple lies in the head of a costandard object. The last condition is equivalent to $\mathcal{O}_{\nu}(\mathcal{\overline{A}}_{\lambda}(2,\ell))$ being semisimple (see Lemma $4.2$ in \cite{Los17_1}).
\end{proof}
 
 \begin{cor}
 	\label{MultiplicityCor}
 	Let $\lambda\in \mathbb{Z}_{<0}+1-\ell\cup \mathbb{Z}_{>0}+\ell-2$. Then
 	\begin{enumerate}
 		\item{}
 		$\triangle_{2\ell}=S_{2\ell}$; 
 		
 		\item{}
 	$\triangle_i$  with $\ell+1<i<2\ell$  has a socle filtration with subquotients $S_i \mbox{ and }S_{i+1}$;
 		
 		\item{}
 		$\triangle_{i}$ with $i \in \{\ell,\ell+1\}$ has a socle filtration with subquotients $S_{i}$ and $S_{\ell+2}$;
 			\item{}
 		$\triangle_{\ell-1}$ has a socle filtration with subquotients $S_{\ell-1}, S_{\ell}, S_{\ell+1}$ and $S_{\ell+2}$;
 			\item{}
 		Finally, $\triangle_{i}$ with $i <\ell-1$ has a socle filtration with subquotients $S_{i}, S_{i+1}$ and $S_{2\ell+1-i}$;
 	\end{enumerate}
 	Let $\lambda\in \mathbb{Z}_{<0}+\frac{1}{2}-\ell\cup \mathbb{Z}_{>0}+\ell-\frac{3}{2}$. 
 	Then
 	\begin{enumerate}
 		\item{}
 		$\triangle_{i}=S_{i}$ for $i>\ell$; 
 		
 		\item{}
 		$\triangle_i$  with $i \leq \ell$  has a socle filtration with subquotients $S_i \mbox{ and }S_{2\ell-i+1}$.
 	\end{enumerate}
 \par
The multiplicity of each subquotient is equal to $1$. 
\end{cor}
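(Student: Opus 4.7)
The plan is to determine the composition series and socle filtration of each standard $\triangle_i$ in $\mathcal{O}_\nu(\overline{\mathcal{A}}_\lambda(2,\ell))$ by applying the exact restriction functor $Res$ and transferring the explicit hypertoric data. The principal inputs are: the composition and socle information for hypertoric standards in Proposition \ref{SignVectors}(c), the decomposition of $Res(S_j)$ from Corollary \ref{ResIm}, and the preservation of socles under $Res$ on standardly filtered objects (Corollary \ref{SoclePreservation}).

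Because $Res$ is exact, the composition factors of $Res(\triangle_i)$ equal, as a multiset, the sum over $j$ of $[\triangle_i : S_j]$ copies of the composition factors of $Res(S_j)$. Since Corollary \ref{ResIm} gives $Res(S_j)$ explicitly -- a two-term direct sum $S_{\alpha_?} \oplus S_{\beta_?}$ for $j \notin \{\ell, \ell+1\}$, and the single simple $S_{\alpha_{mid}}$ for $j \in \{\ell, \ell+1\}$ -- and since distinct $S_j$'s map to disjoint sets of hypertoric simples apart from the collapse $Res(S_\ell) = Res(S_{\ell+1}) = S_{\alpha_{mid}}$, the multiplicities $[\triangle_i : S_j]$ are recovered essentially uniquely from the hypertoric composition series of $Res(\triangle_i)$.

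I will carry out the case analysis in the integer regime as follows. The case $i = 2\ell$ is immediate: $\triangle_{2\ell}$ is minimal in the contraction order and hence coincides with its simple head $S_{2\ell}$. For $\ell+1 < i < 2\ell$, $Res(\triangle_i)$ is a sum of two length-$2$ hypertoric standards, giving exactly four composition factors that arrange into $Res(S_i) \oplus Res(S_{i+1})$; the socle/head constraints (Corollary \ref{SocOfStand} and the highest-weight structure) then pin down the length-$2$ socle filtration. For $i \in \{\ell, \ell+1\}$, $Res(\triangle_i) = \triangle_{\alpha_{mid}}$ has length $3$ with the two extremal factors coming from $S_{\ell+2}$ and the remaining factor from $S_i$ itself, distinguished from the opposite middle simple via the head constraint. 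For $1 \leq i < \ell$, $Res(\triangle_i)$ is again a sum of two hypertoric standards, now of length $4$; unpacking the eight composition factors via Corollary \ref{ResIm} yields the stated subquotients, with the extra $S_{\ell+1}$ appearing precisely at $i = \ell-1$ because the collapse $S_\ell, S_{\ell+1} \mapsto S_{\alpha_{mid}}$ there forces two distinct preimages. In each case the socle filtration order is transported from Proposition \ref{SignVectors}(c) using that $Res$ preserves socles.

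The half-integer case is analogous but simpler: Proposition \ref{SignVectors2} decomposes the target category into a direct sum of $\mathfrak{sl}_2$-type blocks together with a singleton, so each hypertoric standard in the image of $Res$ has length at most $2$ and the multiplicity count is immediate. The main technical obstacle throughout is the bookkeeping at the middle indices $i \in \{\ell-1, \ell, \ell+1\}$, where the three regimes of Corollary \ref{ResIm} meet and where faithfulness of $Res$ on standardly filtered objects (Proposition \ref{Faithfulness}) is essential to conclude that the extra composition factor $S_{\ell+1}$ genuinely appears in $\triangle_{\ell-1}$ rather than being absorbed into $S_\ell$.
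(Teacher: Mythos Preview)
Your approach is essentially the paper's own: both arguments run the exact functor $Res$, read off the hypertoric composition/socle data from Proposition~\ref{SignVectors}(c), and pull it back via Corollary~\ref{ResIm} and the socle-preservation statements (Corollary~\ref{SoclePreservation}, Remark~\ref{SocInclusion}). The paper treats only the range $\ell+1<i<2\ell$ explicitly and declares the rest analogous, while you sketch every case; but the machinery is identical.

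There is one imprecision worth flagging. At $i=\ell-1$ you correctly identify the ambiguity created by the collapse $Res(S_\ell)=Res(S_{\ell+1})=S_{\alpha_{mid}}$: seeing two copies of $S_{\alpha_{mid}}$ in the image only tells you $[\triangle_{\ell-1}:S_\ell]+[\triangle_{\ell-1}:S_{\ell+1}]=2$, not that each multiplicity is $1$. You invoke faithfulness of $Res$ on $\mathcal{O}^{\triangle}$ (Proposition~\ref{Faithfulness}) to resolve this, but faithfulness is the wrong tool here --- it gives injectivity of Hom-spaces, hence only \emph{upper} bounds on their dimensions, and cannot by itself produce a nonzero map $\triangle_{\ell+1}\to\triangle_{\ell-1}$. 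The clean fix is to cite Theorem~\ref{HomsStand} directly: items (1) and (2) there give $\mathrm{Hom}(\triangle_\ell,\triangle_{\ell-1})\neq 0$ and $\mathrm{Hom}(\triangle_{\ell+1},\triangle_{\ell-1})\neq 0$, so the images of these maps force both $S_\ell$ and $S_{\ell+1}$ to occur as composition factors. Since the result is stated as a corollary of that theorem, this is presumably the intended argument, and the paper's own ``analogously'' glosses over the same point.
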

\begin{proof}
	We check the assertion for $\ell+1<i<2\ell$, the remaining cases are established analogously. Let $0=M_0\subset M_1\
	\subset\hdots\subset M_j=\triangle_i$ be a socle filtration. Notice, that $M_1=Soc(\triangle_i)$, so $Res(M_1)=Soc(\triangle_{\alpha_i}\oplus\triangle_{\beta_i})=S_{\alpha_{i+1}}\oplus S_{\beta_{i+1}}$. Next, $Res(M_2/M_1)=Res(Soc(\triangle_i/M_1))\subseteq Soc((\triangle_{\alpha_i}\oplus\triangle_{\beta_i})/(S_{\alpha_{i+1}}\oplus S_{\beta_{i+1}})))$  (see Remark \ref{SocInclusion}), but the latter is equal to $Res(S_{i})$ (see (c) of Proposition \ref{SignVectors}), hence, the nonstrict containment above must be an equality, so $j=2$ and $M_2=\triangle_i$, concluding verification of the claim. 
\end{proof}
 \par
            


\begin{center}
\begin{figure}
\label{HomsInO}
\begin{tikzpicture}
\matrix(m)[matrix of math nodes,
row sep=3em, column sep=2.5em,
text height=1.5ex, text depth=0.25ex]
{  \Delta_6 & \Delta_5 &  \Delta_4 &  \Delta_3 & \Delta_2 & \Delta_1 \\};
\path[->,font=\scriptsize]
(m-1-1)  edge[bend right=65]   (m-1-6)
(m-1-2)  edge[bend right=45]   (m-1-4)
(m-1-2)  edge[bend right=55]   (m-1-5)
(m-1-3)  edge[bend right=45]   (m-1-5)
(m-1-1)  edge   (m-1-2)
(m-1-2)  edge  (m-1-3)

(m-1-4)  edge  (m-1-5)
(m-1-5)  edge  (m-1-6); 
\end{tikzpicture}
\caption{Homs between standard objects in $\mathcal{O}_{\nu}(\mathcal{\overline{A}}_{\lambda}(2,\ell))$ for $\ell=3$ and $\lambda \in -2+\mathbb{Z}_{<0}\cup \mathbb{Z}_{>0}+1$}
\end{figure}
\end{center}

\begin{center}
	\begin{figure}
		\label{HomsInO}
		\begin{tikzpicture}
		\matrix(m)[matrix of math nodes,
		row sep=3em, column sep=2.5em,
		text height=1.5ex, text depth=0.25ex]
		{  \Delta_6 & \Delta_5 &  \Delta_4 &  \Delta_3 & \Delta_2 & \Delta_1 \\};
		\path[->,font=\scriptsize]
		(m-1-1)  edge[bend right=65]   (m-1-6)
	
		(m-1-2)  edge[bend right=55]   (m-1-5)

		(m-1-3)  edge  (m-1-4); 
		\end{tikzpicture}
		\caption{Homs between standard objects in $\mathcal{O}_{\nu}(\mathcal{\overline{A}}_{\lambda}(2,\ell))$ for $\ell=3$ and $\lambda\in-\frac{5}{2}+ \mathbb{Z}_{<0}\cup \mathbb{Z}_{>0}+\frac{3}{2}$}
	\end{figure}
\end{center}

\begin{table}[ht]
\begin{center}
\begin{tabular}{ |c|c|c|c| } 
 \hline
$\Delta_{\rom{1}}$ & $\Delta_{\rom{2}}$ & $\Delta_{\rom{3}}$ & $\Delta_{\rom{4}}$ \\ 
  \hline
 $S_{\rom{1}}$ & $S_{\rom{2}}$ & $S_{\rom{3}}$ &$S_{\rom{4}}$ \\ 
  \hline
 $S_{\rom{2}}$ & $S_{\rom{4}}$ & $S_{\rom{4}}$  &   \\ 
 \hline
$S_{\rom{3}}$ &   &    &  \\ 
 \hline
 $S_{\rom{4}}$ &   &    &  \\ 
 \hline
\end{tabular}
\caption{Multiplicities of simples in standards in $\mathcal{O}_{\nu}(\mathcal{\overline{A}}_{\lambda}(2,\ell))$ for $\ell=2$ and $\lambda\in \mathbb{Z}_{<0}-1\cup \mathbb{Z}_{>0}$}
\end{center}
\end{table}

\section{Singular parameters} 

In this section we will combine the results of McGerty and Nevins from \cite{MN1} and \cite{MN2} to show that certain quantization parameters $\lambda$ are \textit{singular}, by which we understand that the derived localization does not hold. The following definitions are due. 
\begin{defn}
	\label{CFdef}
	Let $M$ be a $D(\overline{R})$-module equipped with a rational action of $G$. This action gives rise to the map $\mathfrak{g}\rightarrow \mbox{End}(M)$ with $x\mapsto x_M$. Recall that $x_{\overline{R}}$ stands for the image of $x$ under the comoment map $\mathfrak{g}\rightarrow D(\overline{R})$. Then $M$ is said to be a \textit{$(G,\lambda)$-equivariant} $D(\overline{R})$-module provided $x_{M}m=x_{\overline{R}}m-\lambda(x)m$ for all $x \in \mathfrak{g}, m \in M$. The category of finitely generated $(G,\lambda)$-equivariant $D(\overline{R})$ modules will be denoted by $D(\overline{R}) -\mbox{mod}^{G,\lambda}$.
	\par
	We have the functor $\pi_{\lambda}:D(\overline{R}) \operatorname{-mod}^{G,\lambda} \rightarrow \overline{\mathcal{A}}_{\lambda}(n, \ell)\operatorname{-mod}$ of taking $G$ - invariants and the functor $\pi_{\lambda}^{\theta}:D_{\overline{R}} \operatorname{-mod}^{G,\lambda} \rightarrow \overline{\mathcal{A}}_{\lambda}^{\theta}(n, \ell)\operatorname{-mod}$ (the latter category is the category of coherent  $\overline{\mathcal{A}}_{\lambda}^{\theta}(n, \ell)$-modules) defined by first microlocalizing to the $\theta$ - semistable locus and then taking $G$ - invariants. 
\end{defn}
\begin{prop}
	The inclusion ker $\pi^{det}_{\lambda}\subset $ ker $\pi_{\lambda}$, where $\pi_{\lambda}:D_{\overline{R}} \operatorname{-mod}^{G,\lambda}  \twoheadrightarrow \overline{\mathcal{A}}_{\lambda}(n, \ell)\operatorname{-mod}$  and $\pi_{\lambda}^{det}: D_{\overline{R}} \operatorname{-mod}^{G,\lambda}  \twoheadrightarrow \mathcal{A}^{det}_{\lambda}(n,\ell)\operatorname{-mod}$
	holds for $\lambda$, provided $\lambda \notin \frac{\mathbb{Z}_{\leq 0}}{k}+(\ell-1)(n-k)-1, k \in \{1,\hdots, n\}$. We also have ker $\pi^{det^{-1}}_{\lambda}\subset $ ker $\pi_{\lambda}$, whenever $\lambda \notin \frac{\mathbb{Z}_{\geq 0}}{k}+(\ell-1)(n-k), k \in \{1,\hdots, n\}$. Moreover, for $\lambda$ as above the functor of global sections $\Gamma_{\lambda}$ is exact.
\end{prop}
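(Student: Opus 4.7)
The plan is to apply the main theorem of McGerty--Nevins \cite{MN1}, which furnishes a sufficient condition for the containment $\ker \pi^{\theta}_\lambda \subset \ker \pi_\lambda$ in terms of the Kirwan--Ness (KN) stratification of the $\theta$-unstable locus $\mu^{-1}(0)\setminus \mu^{-1}(0)^{\theta-ss}\subset T^*\overline{R}$. Roughly, their criterion requires that for every KN one-parameter subgroup $\beta:\mathbb{C}^*\to G$ indexing a stratum $S_\beta$, the pairing $\langle \lambda+\rho_\beta,\beta\rangle$ avoid a prescribed half-line of integers, where $\rho_\beta$ is half the sum of $\beta$-weights on an appropriate conormal piece to $S_\beta$. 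Once that containment is secured, exactness of $\Gamma_\lambda$ comes for free: writing $\pi_\lambda=\Gamma_\lambda\circ \pi_\lambda^{\theta}$ and using that both $\pi_\lambda$ and $\pi_\lambda^{\theta}$ are exact with $\pi_\lambda^{\theta}$ essentially surjective, $\Gamma_\lambda$ inherits exactness on the image category.

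The first real step I would carry out is the identification of the KN strata. For $\theta=\det$, a point $(X_\bullet,Y_\bullet,i,j)\in \mu^{-1}(0)$ is $\theta$-unstable precisely when $V$ admits a proper subspace $V'\subset V$, say of dimension $k\in\{1,\dots,n-1\}$, that is simultaneously $X_i,Y_i$-invariant and contains $\mathrm{im}\,i$, so that the stability condition for $\theta=\det$ fails. Up to conjugation this data single out a primitive one-parameter subgroup $\beta_k:\mathbb{C}^*\to G$ that acts with weight $-1$ on $V'$ and weight $k/(n-k)$ on a chosen complement (normalised so that $\beta_k$ lies in the kernel of $\det$), and the denominator $k$ appearing in the exclusion set reflects this normalisation. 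The analogous list with $V'\supset \ker j$ handles $\theta=\det^{-1}$.

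Next I would compute the shift $\rho_{\beta_k}$, which is the principal technical hurdle. Using the standard tangent/moment complex $\mathfrak{g}\to T^*\overline{R}\to \mathfrak{g}^*$, $\rho_{\beta_k}$ is obtained by summing the positive $\beta_k$-weights on the fibre of $T^*\overline{R}$ over a point of $S_{\beta_k}$, subtracting twice the positive weights on $\mathfrak{g}$, and halving. The $2\ell$ copies of $\mathfrak{gl}(V)$ contribute $2\ell\cdot k(n-k)$ to the positive weights, while $\mathfrak{g}$ itself contributes $2k(n-k)$; dividing by $2$ and by the normalisation factor of $\beta_k$ gives exactly $(\ell-1)(n-k)$. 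The framing terms $V\oplus V^*$ then produce the $-1$ correction for $\theta=\det$ and no correction for $\theta=\det^{-1}$; here one must take care of the $\zeta=\tfrac{1}{2}\mathrm{tr}$ twist relating $\Phi$ and $\Phi^{sym}$, which is precisely where the asymmetry between the two exclusion sets originates. Substituting into the McGerty--Nevins inequality yields the forbidden locus $\tfrac{\mathbb{Z}_{\leq 0}}{k}+(\ell-1)(n-k)-1$ for $\theta=\det$, and $\tfrac{\mathbb{Z}_{\geq 0}}{k}+(\ell-1)(n-k)$ for $\theta=\det^{-1}$.

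Finally, having verified the kernel containment for $\lambda$ outside these two families, I would invoke \cite{MN2} together with the factorisation $\pi_\lambda=\Gamma_\lambda\circ \pi_\lambda^{\theta}$ to conclude exactness of $\Gamma_\lambda$: any short exact sequence $0\to M'\to M\to M''\to 0$ in $\overline{\mathcal{A}}_\lambda^{\theta}(n,\ell)\operatorname{-mod}$ lifts to a short exact sequence in $D(\overline{R})\operatorname{-mod}^{G,\lambda}$ modulo $\ker\pi^{\theta}_\lambda$, and by the kernel inclusion the same lift computes $\Gamma_\lambda$ after applying $\pi_\lambda$, whence exactness is preserved.
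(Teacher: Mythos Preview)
Your overall strategy---invoke the Kirwan--Ness criterion of \cite{MN1} and compute the shifts---is exactly what the paper does, and the paper's proof is little more than a pointer to Section~8 of \cite{MN1} with the bookkeeping changes needed to pass from $\ell=1$ to general $\ell$. However, your execution of the KN side contains a genuine error that would derail the computation.

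The destabilising one-parameter subgroups for $\theta=\det^{\pm 1}$ cannot lie in $\ker\det$: a cocharacter $\beta$ with $\langle\theta,\beta\rangle=0$ never witnesses $\theta$-instability. The correct $\beta_k$ (as in the paper and in \cite{MN1}) acts with weight $-1$ on a $k$-dimensional subspace and weight $0$ on a complement, i.e.\ $\beta_k=-\sum_{i=1}^{k}e_i$; the factor $1/k$ in the excluded set comes from $\langle\det,\beta_k\rangle=-k$, not from any normalisation of $\beta_k$ itself. With your rational weights $(-1,\ldots,-1,k/(n-k),\ldots,k/(n-k))$ the positive $\beta$-weights on each copy of $\mathfrak{gl}(V)$ sum to $kn$ rather than $k(n-k)$, so your displayed arithmetic does not actually produce $(\ell-1)(n-k)$. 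You also restrict to $k\in\{1,\ldots,n-1\}$, but $k=n$ is a KN stratum as well: the central cocharacter $\beta_n(t)=t^{-1}I$ destabilises precisely the locus $\{j=0\}$ (for $\theta=\det^{-1}$), and it contributes the condition $\lambda\notin\mathbb{Z}_{\geq 0}/n$ appearing in the statement. Once you take the correct $\beta_k$ for $k=1,\ldots,n$, the shift is $(\ell-1)k(n-k)+k/2$ and the inequality $(-\lambda-\rho)\cdot\beta_k\notin\mathbb{Z}_{\geq 0}+\text{shift}$ unwinds directly to the stated exclusion, with the $\det$ case obtained from $\overline{\mathcal{A}}^{\theta}_{\lambda}\cong\overline{\mathcal{A}}^{-\theta}_{-\lambda-1}$.

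A minor point: exactness of $\Gamma_\lambda$ is part of the conclusion of the main theorem of \cite{MN1}, so your factorisation argument via $\pi_\lambda=\Gamma_\lambda\circ\pi_\lambda^{\theta}$ is correct in spirit but unnecessary.
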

\begin{ex}
	\label{n2MN}
	In case $n=2$, we have ker $\pi^{det}_{\lambda}\subset $ ker $\pi_{\lambda}$ if $\lambda \notin \frac{\mathbb{Z}_{\leq 0}}{2}-1 \cup \mathbb{Z}_{\leq 0}-\ell$ and ker $\pi^{det^{-1}}_{\lambda}\subset $ ker $\pi_{\lambda}$, if $\lambda \notin \frac{\mathbb{Z}_{\geq 0}}{2}\cup \mathbb{Z}_{\geq 0}+\ell-1$.
\end{ex}
\begin{proof}
	First we recall the main results of \cite{MN1}. Let $X$ be a smooth, connected quasiprojective complex variety with an action of a connected reductive group $G$ and $\lambda: G\rightarrow \mathbb{C}^*$ be a character. Assume, in addition, $X$ is affine, the moment map $\mu:T^*X\rightarrow \mathfrak{g}^*$ is flat and the GIT quotient $\mu^{-1}(0)//_{\chi}G$ is smooth. The group $G$ is equipped with a finite set of one-parameter subgroups of a fixed maximal torus $T \subset G$, depending on $X$ and $\lambda$. These subgroups are known as the Kirwan-Ness one-parameter subgroups. Suppose that for each Kirwan-Ness subgroup $\beta$
	$$ \lambda(\beta) \in shift(\beta) + I(\beta) \subseteq shift(\beta) + \mathbb{Z}_{\geq 0},$$
	where $shift(\beta)$ is a numerical shift and $I(\beta) \subseteq \mathbb{Z}_{\geq 0}$.
	Then any $\lambda$-twisted, $G$-equivariant $D$-module with unstable singular support is in the kernel of quantum Hamiltonian reduction and the functor of global sections $\Gamma_{\lambda}$ is exact.
	\par
	Now we provide the proof of the second assertion (for $\theta=det^{-1}$), the statement for $\theta=det$ can be either shown analogously or derived from the isomorphism $\overline{\mathcal{A}}^{\theta}_{\lambda}(n, \ell) \cong\overline{\mathcal{A}}^{-\theta}_{-\lambda-1}(n, \ell)$ (see  Lemma \ref{Symplectomorphism}). 
	\par
	The computation is very similar to the one in Section $8$ of  \cite{MN1}, so we retain the notations.  The multiplicity of each weight $e_{i}-e_{j}$ is $\ell$ and the weights $e_{i}$ get substituted by $-e_{i}$ (alternatively, to avoid this substitution, one can use partial Fourier transform, 'swapping' $V^{*}$ with $V$, see \cite{MN1} for the details). The Kempf-Ness subgroups $\beta_{k}$ correspond to the weights $-\sum\limits_{i=1}^{k} e_{i}, k \in\{1, \hdots, n\}$.
	The shift (in \textit{loc. cit.}) becomes $(\ell-1)k(n-k)+\frac{k}{2}$ and $I(\beta)=\mathbb{Z}_{\geq 0}$.  Therefore, we need $$(-\lambda-\rho)\cdot \beta_{k}\notin \mathbb{Z}_{\geq 0}+(\ell-1)k(n-k)+\frac{k}{2}$$ $$\frac{k}{2}+k\lambda \notin \mathbb{Z}_{\geq 0}+(\ell-1)k(n-k)+\frac{k}{2}$$ $$\lambda \notin \frac{\mathbb{Z}_{\geq 0}}{k}+(\ell-1)(n-k), k \in \{1,\hdots, n\},$$
	where $\rho=\frac{1}{2}\sum\limits_{i=1}^{n} e_{i}$. For $\lambda$ as above, the functor of global sections $\Gamma_{\lambda}: \mathcal{A}^{\theta}_{\lambda}(n, \ell)\rightarrow \mathcal{A}_{\lambda}(n, \ell)$ is exact (see \cite{MN1}) and the inclusion ker $\pi^{\theta}_{\lambda}\subset $ ker $\pi_{\lambda}$ holds.
\end{proof}

\begin{thm}
	\label{SingParam}
	The algebra $\overline{\mathcal{A}}_{\lambda}(2, \ell)$ is not of finite homological dimension for $\lambda \in (-\ell;\ell-1)\cap \mathbb{Z} \mbox{ or } \lambda=-\frac{1}{2}$, i.e. such $\lambda$ are singular.
\end{thm}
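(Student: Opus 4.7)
The plan is as follows. By the main theorem of \cite{MN2}, recalled in the Remark in Section 1, derived localization at $(\lambda,\theta)$ holds if and only if $\overline{\mathcal{A}}_{\lambda}(2,\ell)$ has finite homological dimension. Thus it suffices to prove that derived localization fails for each $\lambda$ in the claimed singular set. By Lemma \ref{Symplectomorphism} there is an isomorphism $\overline{\mathcal{A}}_{\lambda}(2,\ell)\simeq\overline{\mathcal{A}}_{-\lambda-1}(2,\ell)$, so the property of having finite homological dimension is invariant under $\lambda\mapsto-\lambda-1$. This involution pairs the integers in $(-\ell,\ell-1)$ as $\lambda\leftrightarrow-\lambda-1$ and fixes $\lambda=-\tfrac{1}{2}$, so we may assume $\lambda\in\{-\ell+1,\ldots,-1\}\cup\{-\tfrac{1}{2}\}$.

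The strategy mirrors that of Theorem \ref{AbLocHolds}, run in reverse. Derived localization at $(\lambda,\theta=det^{-1})$ is equivalent to the natural composition maps in \eqref{AbLocCrit} being derived isomorphisms for $m\gg 0$. Applying the exact restriction functor $Res_{\dagger,x}$ at a point $x$ on the $2\ell$-dimensional symplectic leaf (type $3$ in Table $1$), whose transverse slice is the hypertoric variety $\mathcal{SL}_{p}$ of Section 3, should reduce the question to the analogous question for the slice quantization $\mathcal{\overline{S}}_{\lambda}(2,\ell)$. By Remark \ref{LambdaRestriction} the restriction of $\overline{\mathcal{A}}_{\lambda}(2,\ell)$ is $\mathcal{\overline{S}}_{\lambda}(2,\ell)$ with period $(\lambda+\tfrac{1}{2},\lambda+\tfrac{1}{2})$, and the restriction of $\overline{\mathcal{A}}_{\lambda+m\chi,\chi}$ to the slice is the analogous shift bimodule for $\mathcal{\overline{S}}_{\lambda+m\chi}(2,\ell)$.

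It therefore suffices to show that $\mathcal{\overline{S}}_{\lambda}(2,\ell)$ has infinite homological dimension for $\lambda\in\{-\ell+1,\ldots,-1\}\cup\{-\tfrac{1}{2}\}$. Here I would invoke the hypertoric analysis of Section 4: the quantized polarized arrangement $\mathbf{X}=(\Lambda_0,\tilde{\Lambda},\xi)$ attached to $\tilde{\lambda}=\lambda+\tfrac{1}{2}$ fails to be regular (cf.\ Remark \ref{AgreementOnRegLambda}) precisely when $\tilde{\lambda}$ lies on an integer wall of the arrangement. For $\lambda\in\{-\ell+1,\ldots,-1\}$ one checks directly, using the normal vectors in Table \ref{HyperplNormV}, that $\tilde{\lambda}$ falls on several of the hyperplanes $h_i=0$; for $\lambda=-\tfrac{1}{2}$ one has $\tilde{\lambda}=0$, which lies simultaneously on $h_{2\ell-1}=0$ and $h_{2\ell}=0$. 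In either case abelian localization for the slice (the complementary regime of Remark \ref{AbLocHypT}) breaks down, and Theorem 6.1 of \cite{BLPW1} combined with \cite{MN2} forces $\mathcal{\overline{S}}_{\lambda}(2,\ell)$ to have infinite homological dimension.

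The main obstacle, in my view, is the transfer step: one must argue that the failure of the derived composition identity at the level of the slice lifts back to the same failure for $\overline{\mathcal{A}}_{\lambda}(2,\ell)$. This amounts to producing an explicit Harish-Chandra bimodule over $\overline{\mathcal{A}}_{\lambda}(2,\ell)$ whose restriction to the slice is nonzero and supplies a nontrivial obstruction in the composition map, or equivalently to showing that $Res_{\dagger,x}$ is conservative on the relevant portion of the Harish-Chandra category. The proof of Theorem \ref{AbLocHolds} furnishes precisely the converse implication; running its argument in reverse, combined with the exactness recorded in Section 5 and the slice description of \cite{BezrLos}, should yield the desired contradiction with the assumed finite homological dimension of $\overline{\mathcal{A}}_{\lambda}(2,\ell)$.
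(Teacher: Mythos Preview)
Your proposal has a genuine gap, and you have in fact identified it yourself: the transfer step does not go through. The restriction argument in Theorem \ref{AbLocHolds} runs in one direction only. There one assumes the global composition maps \eqref{AbLocCrit} fail, picks a nonzero $M$ in the kernel or cokernel, and uses that $Res_{\dagger,x}(M)\neq 0$ for $x$ on an open leaf of $\mathrm{Supp}(M)$ to contradict the fact that the \emph{slice} maps are isomorphisms. The contrapositive is: slice localization holds $\Rightarrow$ global localization holds. What you need is the converse, slice fails $\Rightarrow$ global fails, and nothing in the exactness of $Res_{\dagger,x}$ or the Nakajima slice isomorphism gives you that. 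You would need an induction (adjoint) functor on HC bimodules and a guarantee that it carries a nontrivial slice obstruction to a nontrivial global one; this is not set up anywhere in the paper, and ``running the argument in reverse'' does not supply it. A secondary issue is that your slice claim is not established either: irregularity of the quantized arrangement or failure of abelian localization in the sense of Remark \ref{AbLocHypT} does not by itself force infinite homological dimension of $\mathcal{\overline{S}}_{\lambda}(2,\ell)$; via \cite{MN2} you would need failure of \emph{derived} localization for the hypertoric algebra, which you do not check.

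The paper's proof is entirely different and avoids the slice transfer problem. One argues by contradiction: if $\overline{\mathcal{A}}_{\lambda}(2,\ell)$ had finite homological dimension then, by \cite{MN2}, derived localization holds; for the $\lambda$ in question Example \ref{n2MN} (from \cite{MN1}) shows $\Gamma_\lambda$ is exact, so in fact abelian localization holds. It follows that the long wall-crossing functor $\mathfrak{WC}_{-\theta\leftarrow\theta}$ gives an \emph{abelian} equivalence $\mathcal{O}_{\nu}(\mathcal{\overline{A}}_{\lambda'}(2,\ell))\to\mathcal{O}_{\nu}(\mathcal{\overline{A}}_{\lambda''}(2,\ell))$ for suitable $\lambda',\lambda''$ with $\lambda'-\lambda\in\mathbb{Z}_{>0}$ large. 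But wall-crossing is a Ringel duality, and by Theorem \ref{HomsStand} the source category is not semisimple for these $\lambda$; this contradicts Lemma 4.2 of \cite{Los17_1}. In short, the singularity is detected by the non-semisimplicity of category $\mathcal{O}$ at a shifted parameter together with the Ringel-duality nature of wall-crossing, not by restricting to the slice.
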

\begin{proof}	
	The argument is completely anagolous to the one of a similar statement for Gieseker schemes in \cite{Los17_1} (see Corollary $5.2$). We give a brief outline. The statement is verified by contradiction. Assume $\overline{\mathcal{A}}_{\lambda}(2, \ell)$ is of finite homological dimension with $\lambda$ as in the statement of the theorem. Then the  main result (Theorem $1.1$) of \cite{MN2} implies that the derived localisation functor $D^{b}(\mathcal{\overline{A}}_{\lambda}\operatorname{-mod})  \rightarrow  D^{b}(\mathcal{\overline{A}}_{\lambda}^{\theta}\operatorname{-mod})$ is an equivalence, restricting to an equivalence $D^{b}(\mathcal{O}_{\nu}(\mathcal{\overline{A}}^{\theta}_{\lambda}(2,\ell)))\rightarrow D^{b}(\mathcal{O}_{\nu}(\mathcal{\overline{A}}_{\lambda}(2,\ell)))$. Since for our choice of $\lambda$ the functor $\Gamma_{\lambda}$ is exact (see Example \ref{n2MN}), the abelian equivalence holds for $\lambda$. From this one can conclude that the long wall-crossing  functor $\mathfrak{WC}_{-\theta\leftarrow \theta}$ induces an abelian equivalence $\mathcal{O}_{\nu}(\mathcal{\overline{A}}_{\lambda'}(2,\ell))\rightarrow \mathcal{O}_{\nu}(\mathcal{\overline{A}}_{\lambda''}(2,\ell))$ (here $\lambda'=\lambda+s$ with $s \in \mathbb{Z}_{>0}$ a sufficiently large integer, so that the category $\mathcal{O}_{\nu}(\mathcal{\overline{A}}_{\lambda'}(2,\ell))$ is a highest weight category and $\lambda''-\lambda' \in \mathbb{Z}$). Since $\mathfrak{WC}_{-\theta\leftarrow \theta}$ is also a Ringel duality and for our choice of $\lambda$ the category $\mathcal{O}_{\nu}(\mathcal{\overline{A}}_{\lambda'}(2,\ell))$ is not semisimple (see Theorem \ref{HomsStand}), we obtain a contradiction with Lemma $4.2$ in \cite{Los17_1}, asserting that a highest weight category $\mathcal{C}$, where the classes of standard and costandard objects coincide  is semisimple if and only if for any  Ringel duality $\mathcal{R}:D^b(\mathcal{C})\rightarrow D^b(\mathcal{C^{\vee}})$, we have $H_0(\mathcal{R(S)})\neq 0$ for any simple object $S\in \mathcal{C}$.
\end{proof}
\begin{prop}
	\label{ablocrmk}
	Arguing completely analogously to the proof of Theorem \ref{AbLocHolds}, one shows that abelian localisation holds for $\theta<0$ and $\lambda \in (-\ell;\ell-1), \lambda\not\in \mathbb{Z} \mbox{ and } \lambda\neq-\frac{1}{2}$.
\end{prop}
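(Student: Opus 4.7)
The plan is to imitate the proof of Theorem \ref{AbLocHolds} verbatim, modifying only the invocation of abelian localisation on the slice. Assume for contradiction that abelian localisation fails for some $(\lambda, \theta)$ with $\theta < 0$ and $\lambda \in (-\ell, \ell-1)$, $\lambda \notin \mathbb{Z}$, $\lambda \neq -\frac{1}{2}$. Then for arbitrarily large $m$ one of the natural adjunction morphisms in \eqref{AbLocCrit} fails to be an isomorphism, producing a nonzero HC bimodule $M$ in its kernel or cokernel. Its support is the closure $\overline{\mathcal{L}}$ of a symplectic leaf of $\overline{\mathcal{M}}(2,\ell)$, and by Corollary \ref{LagrnotIsotr} we must have $\mathcal{L} \neq \{o\}$.

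Since the leaves of $\overline{\mathcal{M}}(2,\ell)$ are totally ordered (Table \ref{SliceTable}) with the type-3 leaf of dimension $2\ell$ as the unique minimal nonzero member, this leaf lies in $\overline{\mathcal{L}}$. Pick a point $x$ on it; applying the exact restriction functor $Res_{\dagger, x}$ to the morphisms in \eqref{AbLocCrit} produces the analogous morphisms for the slice algebras $\mathcal{\overline{S}}_{\lambda}(2,\ell)$, with a nonzero HC bimodule $Res_{\dagger, x}(M)$ in the kernel or cokernel. It thus suffices to establish abelian localisation for $\mathcal{\overline{S}}_{\lambda}(2,\ell)$ throughout the parameter range in question.

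The slice is the hypertoric variety of Section 3.2, and abelian localisation on it reduces, via Theorem 6.1 of \cite{BLPW1}, to the shift-invariance $\mathcal{F}_\Lambda = \mathcal{F}_{\Lambda + r\eta}$ for $r \in \mathbb{Z}_{\geq 0}$, exactly as in Remark \ref{AbLocHypT}. For $\lambda \notin \frac{1}{2}\mathbb{Z}$ the quantised polarised arrangement $(\Lambda_0, \tilde{\Lambda}, \xi)$ is regular, the hypertoric category $\mathcal{O}$ is semisimple by Proposition \ref{SignVectors2}, and feasibility is manifestly stable under translation by $\eta$. For $\lambda \in \mathbb{Z} + \frac{1}{2}$, $\lambda \neq -\frac{1}{2}$, one reads off from the defining inequalities in Proposition \ref{SignVectors} that the hyperplanes $h_{2\ell-1} = 0$ and $h_{2\ell} = 0$ cut out $V_\Lambda$ at the separated nonzero integer heights $\pm\tilde{\lambda}$, and the translation by $\eta = (-1,-1)$ moves them in parallel without altering the list of bounded feasible chambers.

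The principal obstacle is the half-integer case $\lambda \in \mathbb{Z} + \frac{1}{2} \setminus \{-\frac{1}{2}\}$, where the arrangement is not regular and the category is not semisimple, so one cannot merely appeal to genericity. This must be handled by a direct combinatorial check on the list of $4\ell - 3$ bounded feasible sign vectors from Proposition \ref{SignVectors}, verifying each persists after the shift $\Lambda \to \Lambda + r\eta$; the excluded value $\lambda = -\frac{1}{2}$ corresponds precisely to the coincidence $h_{2\ell-1} = h_{2\ell}$ that would destroy the matching. Once this case is in hand, Theorem 6.1 of \cite{BLPW1} delivers abelian localisation on the slice and contradicts the nonvanishing of $Res_{\dagger, x}(M)$.
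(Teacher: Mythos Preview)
Your overall strategy mirrors Theorem \ref{AbLocHolds} faithfully, but the paper actually takes a different route on the slice, and your verification there has gaps.

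The paper does \emph{not} re-establish abelian localisation for $\overline{\mathcal{S}}_{\lambda}(2,\ell)$ in the new range. Instead it picks $x$ on the \emph{open} leaf of $\mathrm{Supp}(M)$, so that $Res_{\dagger,x}(M)$ is automatically a nonzero \emph{finite-dimensional} bimodule over the slice quantization $\mathcal{SL}_{x,\tilde\lambda}$. The contradiction then comes from the much simpler input that for $\lambda\notin\mathbb{Z}$ (and $\lambda\neq-\tfrac12$) neither slice algebra---the hypertoric $\overline{\mathcal{S}}_\lambda(2,\ell)$ nor the type-$A_1$ Kleinian quantization attached to the codimension-$2$ leaf---admits finite-dimensional irreducibles. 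This bypasses any need to analyse feasible sets or invoke Theorem 6.1 of \cite{BLPW1} inside the interval $(-\ell,\ell-1)$.

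Your attempt to verify abelian localisation on the hypertoric slice for this range leans on results that do not apply there. Proposition \ref{SignVectors} and the regularity statement in the text are proved only for $\tilde\lambda\in(-\infty,1-\ell)\cup(\ell-2,\infty)$, precisely the complement of the interval you need; Remark \ref{AbLocHypT} likewise covers only $\lambda<1-\ell$. The ``direct combinatorial check'' you propose for half-integers would have to be carried out from scratch (the list of $4\ell-3$ sign vectors is not available in this range), and semisimplicity of category $\mathcal{O}$ for $\lambda\notin\tfrac12\mathbb{Z}$ does not by itself yield the feasibility invariance $\mathcal{F}_\Lambda=\mathcal{F}_{\Lambda+r\eta}$. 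So while your scheme is in principle repairable, the key step---abelian localisation on the slice throughout $(-\ell,\ell-1)$---is asserted rather than proved, and the paper's finite-dimensionality argument is both shorter and already available from the classification of slices in Table~1.
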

\begin{proof}  
	We notice that if  $\lambda\not\in \mathbb{Z}$ there are no finite dimensional irreducibles neither in the category $\mathcal{S}_{\lambda}$-mod nor the category of finitely generated modules over the corresponding quantization of the $2$-dimensional slice, the type $A_1$  Kleinian singularity $\mathbb{C}^2/\mathbb{Z}/2\mathbb{Z}$ (see Remark \ref{A_1SingSlice}). Since the aforementioned varieties expose the list  of slices (Table \ref{SliceTable}) we conclude that there are no finite-dimensional irreducibles over the quantization $\mathcal{SL}_{x,\tilde{\lambda}}$ for any slice $\mathcal{SL}_x$.
	
	On the other hand, if the equivalence does not hold, there exists a nontrivial bimodule $M$ in the kernel or cokernel of one of the maps in \eqref{AbLocCrit} (see the proof of Theorem \ref{AbLocHolds}) and a point $x\in \overline{\mathcal{M}}(2,\ell)$, s.t. $Res_{\dagger,x}(M)\neq 0$ is finite dimensional. Hence, we come up with a contradicion.
\end{proof}
Combining  Theorems \ref{AbLocHolds} and \ref{SingParam} with Proposition \ref{ablocrmk}, we  establish the abelian localisation theorem. 
\begin{thm}
\label{AbLocThm}
The abelian localisation holds  for $\lambda \not\in (-\ell;\ell-1)\cap \mathbb{Z} \mbox{ and } \lambda\neq-\frac{1}{2}$.
\end{thm}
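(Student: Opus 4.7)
The plan is to stitch together the three prior statements and verify that together they exhaust the complement of the singular locus. Write $\Sigma := \bigl((-\ell,\ell-1)\cap\mathbb{Z}\bigr)\cup\{-\tfrac{1}{2}\}$ for the singular set identified in Theorem \ref{SingParam}; the claim is that abelian localisation holds for every $\lambda\in\mathbb{C}\setminus\Sigma$.

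First I would partition $\mathbb{C}\setminus\Sigma$ into the three pieces
\[
\bigl\{\lambda<-\ell\bigr\}\cup\bigl\{\lambda>\ell-1\bigr\}\cup\bigl((-\ell,\ell-1)\setminus\Sigma\bigr),
\]
together with the boundary points $\lambda=-\ell$ and $\lambda=\ell-1$, which are non-integer only if one re-reads them against $\Sigma$ (they lie outside $\Sigma$ since the interval is open, but they are still handled by the same arguments). On the first two pieces, Theorem \ref{AbLocHolds} directly gives abelian localisation: for $\lambda<-\ell$ it holds with $\theta<0$, while for $\lambda>\ell-1$ it holds with $\theta>0$. Note that the symmetry $\overline{\mathcal{A}}_{\lambda}(n,\ell)\cong \overline{\mathcal{A}}_{-\lambda-1}(n,\ell)$ from Lemma \ref{Symplectomorphism} is what makes the two sides mirror each other, so only one of them has to be verified independently.

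For the middle piece $(-\ell,\ell-1)\setminus\Sigma$, i.e.\ non-integer $\lambda$ different from $-\tfrac{1}{2}$, I would appeal directly to Proposition \ref{ablocrmk}, which handles exactly this range with $\theta<0$; once again the $\theta>0$ counterpart follows from Lemma \ref{Symplectomorphism}. The only point to double-check is that the union of the three covered ranges really is all of $\mathbb{C}\setminus\Sigma$: integers outside $(-\ell,\ell-1)$ fall into the first two pieces (since they satisfy $\lambda\leq -\ell$ or $\lambda\geq \ell-1$), while non-integers outside the open interval likewise fall into those pieces; all remaining parameters in $(-\ell,\ell-1)$ that are not in $\Sigma$ are non-integer and not equal to $-\tfrac12$, and hence covered by Proposition \ref{ablocrmk}.

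There is no real obstacle here beyond bookkeeping: the content of the theorem is essentially the observation that Theorem \ref{SingParam} is sharp, with all three complementary ranges already covered by earlier results. The mildest subtlety is the consistency of the role of $\theta$: Theorem \ref{AbLocHolds} pairs each range with a specific sign of $\theta$, and to assert abelian localisation \emph{as a property of $\lambda$} (independent of $\theta$) one uses Lemma \ref{Symplectomorphism} to transport the statement between $\theta$ and $-\theta$. With this in hand the three previously proved statements combine to give the theorem.
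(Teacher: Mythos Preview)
Your proposal is correct and follows the same approach as the paper, whose entire proof is the single sentence ``Combining Theorems \ref{AbLocHolds} and \ref{SingParam} with Proposition \ref{ablocrmk}, we establish the abelian localisation theorem.'' Your more careful bookkeeping (including the observation about the boundary points $\lambda=-\ell$ and $\lambda=\ell-1$, which the paper's stated ranges also leave implicit) is a faithful expansion of that combination.
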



\end{document}